 \newtheorem{thm}{Theorem}[section]
 \newtheorem{cor}[thm]{Corollary}
 \newtheorem{lem}[thm]{Lemma}
 \newtheorem{prop}[thm]{Proposition}
 \theoremstyle{definition}
 \newtheorem{defn}[thm]{Definition}
 \theoremstyle{remark}
 \newtheorem{rem}[thm]{Remark}
 \numberwithin{equation}{section}
 \newcommand{\h}{\mathcal{H}}
 \newcommand{\A}{\mathcal{A}}
 \newcommand{\C}{\mathbb{C}}
 \newcommand{\abs}[1]{\left\vert#1\right\vert}
 \newcommand{\norm}[1]{\left\Vert#1\right\Vert}
\begin{document}
\title
{Singular Masas  and Measure-Multiplicity Invariant}

\author{ Kunal Mukherjee }

\address{\hskip-\parindent
 Department of Mathematics \\
 Texas A\&M University \\
 College Station TX 77843--3368, USA}

\email{kunal@imsc.res.in;kunal@neo.tamu.edu}
\begin{abstract}
In this paper we study relations between the
\emph{left-right-measure} and properties of singular masas. Part of
the analysis is mainly concerned with masas for which the
\emph{left-right-measure} is the class of product measure. We
provide examples of Tauer masas in the hyperfinite $\rm{II}_{1}$
factor whose \emph{left-right-measure} is the class of Lebesgue
measure. We show that for each subset $S\subseteq \mathbb{N}$, there
exist uncountably many pairwise non conjugate singular masas in the free
group factors with \emph{Puk\'{a}nszky invariant} $S\cup\{\infty\}$.
\end{abstract}
\thanks{}

%\subjclass{46L10}

\keywords{von Neumann algebra; masa; measure-multiplicity invariant}

\date{30 April 2010}

\dedicatory{}

\commby{}

\maketitle

\section{Introduction and Preliminaries}

Throughout the entire paper, $\mathcal{M}$ will denote a separable
$\rm{II}_{1}$ factor equipped with its faithful normal tracial state $\tau$. This trace
gives rise to a Hilbert norm on $\mathcal{M}$, given by
$\norm{x}_{2}=\tau(x^{*}x)^{\frac{1}{2}}$, $x\in \mathcal{M}$. The
Hilbert space completion of $\mathcal{M}$ with respect to
$\norm{\cdot}_{2}$ is denoted by $L^{2}(\mathcal{M})$. Let
$\mathcal{M}$ act on $L^{2}(\mathcal{M})$ via left multiplication.
Let $A\subset \mathcal{M}$ be a maximal abelian self-adjoint subalgebra $($masa$)$. Dixmier in
\cite{MR0059486} defined the group of \emph{normalizing unitaries} $($or \emph{normalizer}$)$ of
$A$ to be the set
$$N(A)=\left\{u\in \mathcal{U}(\mathcal{M}): uAu^{*}=A\right\},$$
where $\mathcal{U}(\mathcal{M})$ denotes the unitary group of
$\mathcal{M}$. He called \\
$(i)$ $A$ to be \emph{regular} $($also \emph{Cartan}$)$ if $N(A)^{\prime\prime}=\mathcal{M}$,\\
$(ii)$ $A$ to be \emph{semiregular} if $N(A)^{\prime\prime}$ is a
subfactor of $\mathcal{M}$,\\
$(iii)$ $A$ to be \emph{singular} if $N(A)\subset A$.\\
\indent Two masas $A,B$ of $\mathcal{M}$ are said to be
\emph{conjugate}, if there is an automorphism $\theta$ of
$\mathcal{M}$ such that $\theta(A)=B$. If there is an unitary $u\in
\mathcal{M}$ such that $uAu^{*}=B$, then $A$ and $B$ are called
\emph{unitarily} $($\emph{inner}$)$ \emph{conjugate}.
One of the most fundamental problem regarding masas is to decide the conjugacy of two masas.
The most successful invariant so far in this regard is the \emph{Puk\'{a}nszky invariant} \cite{MR0113154}.
Nevertheless, it is not a complete invariant.\\
\indent The \emph{measure-multiplicity invariant} of masas in $\rm{II}_{1}$
factors was studied in \cite{MR2261688,MR11932708,MR1940356}. It was
used in \cite{MR2261688} to distinguish two masas with the same
\emph{Puk\'{a}nszky invariant}. It is a stronger invariant than the
\emph{Puk\'{a}nszky invariant}. It has two main
components, a measure class $($\emph{left-right-measure}$)$ and a
multiplicity function, which together encode the structure of the
standard Hilbert space as an associated bimodule. In this paper, we
study analytical relations between the \emph{left-right-measure} and
properties of singular masas. We focus on the following question: To
what extent does the standard Hilbert space as a natural bimodule remember
properties of the masa. In \cite{MR11932708}, we established that
\emph{left-right-measure} has all information to measure the size of $N(A)$ $($see Thm. 5.5 \cite{MR11932708}$)$.\\
\indent In this paper, we consider different kinds of singular masas.
We introduce a condition on masas which forces vigorous mixing properties. Such masas are
automatically strongly mixing \cite{MR2465603} $($for a proof see Thm. 9.2 \cite{MR999999999}$)$ and consequently singular. We show that if $A$ is such a masa
in $\mathcal{M}$ with singleton multiplicity, then the Hilbert space $L^{2}(\mathcal{M})\ominus L^{2}(A)$
as a natural $A,A$-bimodule is a direct sum of copies of $L^{2}(A)\otimes L^{2}(A)$, i.e., we show that its \emph{left-right-measure} is the class of product measure.
We also present a converse to the foresaid statement. The arguments required to
prove this statement show that, if $B\subset A$ is diffuse, then
$L^{2}(\mathcal{M})\ominus L^{2}(B^{\prime}\cap \mathcal{M})$ as a $B,B$-bimodule
is a direct sum of copies of submodules of $L^{2}(B)\otimes L^{2}(B)$. There is an abundance of such
masas in the hyperfinite $\rm{II}_{1}$ factor, but there are fewer examples of such masas, if in addition we demand that such a masa
has a bicyclic vector. We also study the \emph{left-right-measure} of $\Gamma$ and
non-$\Gamma$ singular masas. In particular, we show that under certain extra assumption
on central sequences, the presence of central sequences in a masa can be related to
\emph{rigid measures}. Examples of such masas come from Ergodic theory.\\
\indent The following question asked by Banach is a long standing open problem in Ergodic theory.
Does there exist a simple measure preserving $($m.p.$)$ automorphism with pure Lebesgue spectrum?
It is implicit in the above question that one is asking about an action of $\mathbb{Z}$. Translated
to operator algebras this means $($see for instance \cite{MR1940356}$)$, whether there is a way to construct the hyperfinite $\rm{II}_{1}$ factor as $L^{\infty}(X,\mu)\rtimes \mathbb{Z}$, where $L(\mathbb{Z})$ is a \emph{simple masa} whose \emph{left-right-measure} is the class of product measure.
The term `simple' of course means simple multiplicity or equivalently, the existence of a bicyclic vector of $A$.\\
\indent We provide an example of such a Tauer masa in the hyperfinite $\rm{II}_{1}$
factor. All Tauer masas are simple \cite{MR2253595}. We do not know if this example arises from an action of
integers or any other group action. But quite surprisingly Banach's problem
has an easy and affirmative answer if we change the group.
Using the methods developed in \cite{MR2261688}, we show that for
each subset $S\subseteq \mathbb{N}$ $($could be empty$)$, there are
uncountably many pairwise non conjugate singular masas in the free group
factors with \emph{Puk\'{a}nszky invariant} $S\cup \{\infty\}$.\\
\indent This paper is organized as follows. We provide the background
material in this section itself. In \S2, we study masas for which the \emph{left-right-measure} is the class of product measure. \S3 is devoted to Tauer masas. \S4 contains partial
results regarding the \emph{left-right-measure} of $\Gamma$ and
non-$\Gamma$ masas. In \S5, we exhibit examples of singular masas in
free group factors.\\
\indent Let $J$ denote the Tomita's modular operator on
$L^{2}(\mathcal{M})$, obtained by extending the densely defined
map $J:\mathcal{M}\mapsto \mathcal{M}$ by $Jx=x^{*}$. The image of a $L^{2}$ vector $\zeta$ under $J$ will be denoted by $\zeta^{*}$. Let $e_{A}:L^{2}(\mathcal{M})\mapsto
L^{2}(A)$ be the Jones projection associated to $A$. Denote
$\A=(A\cup JAJ)^{\prime\prime}$. It is known that $e_{A}\in
\A$ $($Thm. 3.1 \cite{MR815434}$)$. Let $\mathbb{E}_{A}$ denote the unique, normal, trace
preserving conditional expectation from $\mathcal{M}$ on to $A$. The
conditional expectation $\mathbb{E}_{A}$ and the trace extends to
$L^{1}(\mathcal{M})$ in a continuous fashion $($see \S B.5
\cite{MR999996}$)$. With abuse of notation, we will write
$e_{A}(\zeta)=\mathbb{E}_{A}(\zeta)$ for $L^{1}$ and $L^{2}$
vectors. Similarly, we will use the same symbol $\tau$ to denote its
extension. This will be clear from the context and will cause no
confusion. This work relies on direct integrals. For standard
results on direct integrals we refer the reader to \cite{MR641217}.
Throughout the entire paper $\mathbb{N}_{\infty}$ will denote the
set $\mathbb{N}\cup \{\infty\}$. For a set $X$, we will write $\Delta(X)$ to denote the diagonal of $X\times X$.

\begin{defn}
Given a type $\rm{I}$ von Neumann algebra $B$, we shall write
Type($B$) for the set of all those $n\in\mathbb{N}_\infty$ such that
$B$ has a nonzero component of type $\rm{I}_{n}$.
\end{defn}

\begin{defn}\cite{MR0113154}\label{Puk_invariant}
The \emph{Puk\'{a}nszky invariant} of $A\subset \mathcal{M}$,
denoted by $Puk(A)$ (or $Puk_{\mathcal{M}}(A)$ when the containing
factor is ambiguous) is Type($\A^{\prime}(1 - e_{A}))$.
\end{defn}

\begin{defn}\cite{MR2261688,MR11932708,MR1940356}\label{mminv}
The \emph{measure-multiplicity invariant} of $A\subset \mathcal{M}$,
denoted by $m.m(A)$, is the equivalence class of quadruples
$(X,\lambda_{X},[\eta_{\mid\Delta(X)^{c}}],m_{\mid\Delta(X)^{c}})$
under the equivalence relation
$\sim_{m.m}$, where,\\
$(i)$ $X$ is a compact Hausdorff space such that $C(X)$ is an unital, norm separable, \emph{w.o.t} dense subalgebra of $A$,\\
$(ii)$ $\lambda_{X}$ is the Borel probability measure obtained by restricting the trace $\tau$ on $C(X)$,\\
and,\\
$(iii)$ $\eta_{\mid\Delta(X)^{c}}$ is the measure on $X\times X$ concentrated on $\Delta(X)^{c}$, and\\
$(iv)$ $m_{\mid\Delta(X)^{c}}$ is the multiplicity function restricted to $\Delta(X)^{c}$,\\
obtained from the direct integral decomposition of
$L^{2}(\mathcal{M})\ominus L^{2}(A)$, so that $\A(1-e_{A})$ is the
algebra
of diagonalizable operators with respect to this decomposition, the equivalence $\sim_{m.m}$ being,
$(X,\lambda_{X},[\eta_{\mid\Delta(X)^{c}}],m_{\mid\Delta(X)^{c}})\sim_{m.m}(Y,\lambda_{Y},[\eta_{\mid\Delta(Y)^{c}}],m_{\mid\Delta(Y)^{c}})$,
if and only if, there exists a Borel isomorphism $F:X\mapsto Y$ such
that,
\begin{align}
\nonumber & F_{*}\lambda_{X}=\lambda_{Y},\\
\nonumber & (F\times F)_{*}[\eta_{\mid\Delta(X)^{c}}]=[\eta_{\mid\Delta(Y)^{c}}],\\
\nonumber & m_{\mid\Delta(X)^{c}}\circ(F\times
F)^{-1}=m_{\mid\Delta(Y)^{c}}, \eta_{\mid\Delta(Y)^{c}} \text{ a.e.}
\end{align}
\end{defn}
\indent It is easy to see that the \emph{Puk\'{a}nszky invariant} of
$A\subset \mathcal{M}$ is the set of essential values of the
multiplicity function in Defn. \ref{mminv}. The measure class
$[\eta_{\mid\Delta(X)^{c}}]$ in Defn. \ref{mminv} is said to be the
\emph{left-right-measure} of $A$. Both $m.m(\cdot)$ and $Puk(\cdot)$
are invariants of the masa under automorphisms of the factor
$\mathcal{M}$. Given a pair of masas, the \emph{mixed Puk\'{a}nszky
invariant} was introduced in \cite{MR999998}. Analogous to the
\emph{mixed Puk\'{a}nszky invariant}, one can define the
\emph{joint-measure-multiplicity invariant} for pair of masas $A$ and $B$, by considering the direct integral decomposition of
$L^{2}(\mathcal{M})$ with respect to $(A\cup JBJ)^{\prime\prime}$. For
details check Ch. \rm{V} \cite{MR1000012}. Such invariants play a role in
questions concerning unitary conjugacy of masas.

In some cases, it is necessary to have a direct integral
decomposition of $L^{2}(\mathcal{M})$. These are situations when one
considers tensors of masas. In these cases, the information on the
diagonal $\Delta(X)$ is to be supplied. Following \S2.3 of \cite{MR11932708}
note that
\begin{align}
\nonumber L^{2}(\mathcal{M})\cong\int_{X\times
X}^{\oplus}\h_{t,s}d(\eta_{\mid\Delta(X)^{c}}+\tilde{\Delta}_{*}\lambda_{X})(t,s),
\end{align}
where $\tilde{\Delta}:X\mapsto X\times X$ by $t\mapsto (t,t)$,
$\h_{t,t}=\C$ for $\lambda_{X}$ almost all $t$, $\h_{t,s}$ depends
on the \emph{Puk\'{a}nszky invariant} and $\A$ is diagonalizable
with respect to this decomposition. In such cases, we will also call
$[\eta_{\mid\Delta(X)^{c}}+\tilde{\Delta}_{*}\lambda_{X}]$ to be the
\emph{left-right-measure} of $A$. It is to be understood that, when
we consider direct integrals or make statements about
diagonalizability, we need to complete the measures under
consideration.

For a masa $A\subset \mathcal{M}$, fix a compact Hausdorff space $X$
such that $C(X)\subset A$ is an unital, norm separable and
\emph{w.o.t} dense $C^{*}$ subalgebra. Let $\lambda$ denote the
tracial measure on $X$. For $\zeta_{1},\zeta_{2}\in
L^{2}(\mathcal{M})$, let $\kappa_{\zeta_{1},\zeta_{2}}:C(X)\otimes
C(X)\mapsto \C$ be the linear functional defined by,
\begin{align}
\nonumber \kappa_{\zeta_{1},\zeta_{2}}(a\otimes b)=\langle
a\zeta_{1} b,\zeta_{2}\rangle,\text{ } a,b\in C(X).
\end{align}
Then $\kappa_{\zeta_{1},\zeta_{2}}$ induces an unique complex Radon
measure $\eta_{\zeta_{1},\zeta_{2}}$ on $X\times X$ given by,
\begin{align}\label{measure_from_kappa}
\kappa_{\zeta_{1},\zeta_{2}}(a\otimes b)=\int_{X\times
X}a(t)b(s)d\eta_{\zeta_{1},\zeta_{2}}(t,s),
\end{align}
and
$\norm{\eta_{\zeta_{1},\zeta_{2}}}_{t.v}=\norm{\kappa_{\zeta_{1},\zeta_{2}}}$,
where $\norm{\cdot}_{t.v}$ denotes the total variation norm of
measures.

We will write $\eta_{\zeta,\zeta}=\eta_{\zeta}$. Note that
$\eta_{\zeta}$ is a positive measure for all $\zeta\in
L^{2}(\mathcal{M})$. It is easy to see that the following
polarization type identity holds:
\begin{align}\label{polarize}
4\eta_{\zeta_{1},\zeta_{2}}=\left(\eta_{\zeta_{1}+\zeta_{2}}-\eta_{\zeta_{1}-\zeta_{2}}\right)+i\left(\eta_{\zeta_{1}+i\zeta_{2}}-\eta_{\zeta_{1}-i\zeta_{2}}
\right).
\end{align}
Note that the decomposition of $\eta_{\zeta_{1},\zeta_{2}}$ in Eq.
\eqref{polarize} need not be its Hahn decomposition in general, but
\begin{align}\label{polarization1}
4\abs{\eta_{\zeta_{1},\zeta_{2}}}&\leq\left(\eta_{\zeta_{1}+\zeta_{2}}+\eta_{\zeta_{1}-\zeta_{2}}\right)+\left(\eta_{\zeta_{1}+i\zeta_{2}}+\eta_{\zeta_{1}-i\zeta_{2}}\right)
=4(\eta_{\zeta_{1}}+\eta_{\zeta_{2}}).
\end{align}
So
\begin{align}\label{abscont11}
\abs{\eta_{\zeta_{1},\zeta_{2}}}\leq
\eta_{\zeta_{1}}+\eta_{\zeta_{2}}.
\end{align}
\indent To understand the relation between properties of masas and
their \emph{left-right-measure}, disintegration of measures will be
used, for which we refer the reader to \S2 of \cite{MR1484954}. Let $T$ be
a measurable map from $(X,\sigma_{X})$ to $(Y,\sigma_{Y})$, where
$\sigma_{X},\sigma_{Y}$ are $\sigma$-algebras of subsets of $X,Y$
respectively. Let $\beta$ be a $\sigma$-finite measure on
$\sigma_{X}$ and $\mu$ a $\sigma$-finite measure on $\sigma_{Y}$.
Here $\beta$ is the measure to be disintegrated and $\mu$ is often
the push forward measure $T_{*}\beta$, although other possibilities
for $\mu$ is allowed.
\begin{defn}\cite{MR1484954}\label{definition_of_disintegration}
We say that $\beta$ has a disintegration $\{\beta^{t}\}_{t\in Y}$
with
respect to $T$ and $\mu$ or a $(T,\mu)$-disintegration if:\\
$(i)$ $\beta^{t}$ is a $\sigma$-finite measure on $\sigma_{X}$
concentrated on $\{T=t\}$ $($or $T^{-1}\{t\})$, i.e.,
$\beta^{t}(\{T\neq t\})=0$, for
$\mu$-almost all $t$,\\
and, for each nonnegative $\sigma_{X}$-measurable function $f$ on $X$:\\
$(ii)$ $t\mapsto \beta^{t}(f)$ is $\sigma_{Y}$-measurable.\\
$(iii)$
$\beta(f)=\mu^{t}(\beta^{t}(f))\overset{\text{defn}}=\int_{Y}\beta^{t}(f)d\mu(t)$.
\end{defn}
If $\beta$ in Defn. \ref{definition_of_disintegration} is a complex
measure, then the disintegration of $\beta$ is obtained by
decomposing it into a linear combination of four positive measures,
using the Hahn decomposition of its real and imaginary parts.\\
\noindent \textbf{Notation:} The disintegrated measures are usually
written with a subscript $t\mapsto \beta_{t}$ in the literature. But
in this paper, we will use the superscript notation $t\mapsto
\beta^{t}$ to denote them. The $(\pi_{1},\lambda)$-disintegration of
measures on $X\times X$
will be indexed by the variable $t$  and the
$(\pi_{2},\lambda)$-disintegration will be indexed by the variable
$s$, where $\pi_{i}:X\times X\mapsto X$, $i=1,2$, are the coordinate
projections. We will only consider the
$(\pi_{i},\lambda)$-disintegrations of the measures $\eta_{\zeta},
\eta_{\zeta_{1},\zeta_{2}}$ defined in Eq.
\eqref{measure_from_kappa}. These disintegrations exist from Thm.
3.2 \cite{MR11932708} $($also see Thm. 1 \cite{MR1484954}$)$. The measure
$\eta_{\zeta}^{t}$ is concentrated on $\{t\}\times X$ and the
measure $\eta_{\zeta}^{s}$ is concentrated on $X\times \{s\}$ for
$\lambda$ almost all $t,s$ respectively. We will denote by
$\tilde{\eta}_{\zeta}^{t}$ the restriction of the measure
$\eta_{\zeta}^{t}$ on $\{t\}\times X$. Similarly define
$\tilde{\eta}_{\zeta}^{s}$. Thus,
$\tilde{\eta}_{\zeta}^{t},\tilde{\eta}_{\zeta}^{s}$ can be regarded
as measures on $X$.\\
\indent The \emph{left-right-measure} $[\eta]$ of $A$ has the
following property. If $\theta:X\times X\mapsto X\times X$ is the
flip map i.e., $\theta(t,s)=(s,t)$, then
$\theta_{*}\eta\ll\eta\ll\theta_{*}\eta$ $($see Lemma 2.9
\cite{MR11932708}$)$. In fact, it is possible to obtain a choice of
$\eta$ for which $\theta_{*}\eta=\eta$. So in most of the analysis,
we will only state or prove results with respect to the
$(\pi_{1},\lambda)$-disintegration. An analogous statement with
respect to the $(\pi_{2},\lambda)$-disintegration is also possible.
We will only work with finite or probability measures.

\section{The Product Class}

It is not always easy to describe the properties of a singular masa
based on its \emph{left-right-measure}. However, we can write interesting
properties of masas when the \emph{left-right-measure} is the class
of product measure. Such masas are singular Thm. 5.5 \cite{MR11932708}.
Examples of such masas are easy to obtain in many situations and many
known masas, for example, the single generator masas in the free group factors, the masas that arise out of Bernoulli
shift actions of countable discrete abelian groups belong to this
class. In this section, we shall give analytical conditions for the
\emph{left-right-measure} of a masa to be the class of product measure.\\
\indent Let $\lambda$ denote the Lebesgue measure on $[0,1]$ so that
$A\cong L^{\infty}([0,1],\lambda)$. Then $\lambda$ is the tracial
measure. Let $[\eta]$ denote the \emph{left-right-measure} of $A$.
We assume that $\eta$ is a probability measure on $[0,1]\times
[0,1]$ and $\eta(\Delta([0,1]))=0$.\\
\indent `The \emph{left-right-measure} of any masa in the
free group factors contains a part of $\lambda\otimes \lambda$ as
a summand'. This statement of Voiculescu is one of
the most important theorem in the subject $($Cor. 7.6 \cite{MR1371236} applied to a system of free semicirulars does the job.$)$ This is the precise
reason for the absence of Cartan subalgebras in the free
group factors. In many cases, the \emph{left-right-measures} are
difficult to calculate. So we need conditions in terms of operators that
characterize the Lebesgue class. The following is the main result of this section.
It will be proved later in this section.

\begin{thm}\label{sum_cond}
The left-right-measure of a masa $A\subset \mathcal{M}$ is the class of product measure, if there exists a set $S\subset
\mathcal{M}$ such that $\mathbb{E}_{A}(x)=0$ for all $x\in S$ and\\
$(i)$  the linear span of $S$ \text{ is dense in }$L^{2}(\mathcal{M})\ominus L^{2}(A)$,\\
$(ii)$ there is an orthonormal basis
$\{v_{n}\}_{n=1}^{\infty}\subset A$ of $L^{2}(A)$ such that
\begin{align}
\nonumber \sum_{n=1}^{\infty}\norm{\mathbb{E}_{A}(x
v_{n}x^{*})}_{2}^{2}<\infty \text{ for all }x\in S,
\end{align}
$(iii)$ there is a nonzero vector $\zeta\in
L^{2}(\mathcal{M})\ominus L^{2}(A)$ such that $\mathbb{E}_{A}(\zeta
u^{n}\zeta^{*})=0$ for all $n\neq 0$, where $u$ is a Haar unitary
generator of $A$.
\end{thm}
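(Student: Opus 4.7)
The goal is $[\eta]=[\lambda\otimes\lambda]$, which I split into two absolute continuity statements: conditions (i)--(ii) will yield $\eta\ll\lambda\otimes\lambda$, and (iii) will yield the reverse. The workhorse is the disintegration identity
$$\int_{X\times X}a(t)b(s)\,d\eta_{\xi}(t,s)=\langle a\xi b,\xi\rangle=\tau\bigl(a\,\mathbb{E}_{A}(\xi b\xi^{*})\bigr),\qquad a,b\in C(X),$$
which combined with the existing $(\pi_{1},\lambda)$-disintegration of $\eta_{\xi}$ gives the pointwise formula $\tilde{\eta}_{\xi}^{t}(b)=\mathbb{E}_{A}(\xi b\xi^{*})(t)$ for $\lambda$-a.e.\ $t$.

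For $\eta\ll\lambda\otimes\lambda$: take $\xi=x\in S$ and $b=v_{n}$; then $\tilde{\eta}_{x}^{t}(v_{n})=\mathbb{E}_{A}(xv_{n}x^{*})(t)$ are the Bessel coefficients of $\tilde{\eta}_{x}^{t}$ against the orthonormal basis $\{v_{n}\}$ of $L^{2}(A,\lambda)$. By Tonelli and (ii),
$$\int_{X}\sum_{n}|\tilde{\eta}_{x}^{t}(v_{n})|^{2}\,d\lambda(t)=\sum_{n}\norm{\mathbb{E}_{A}(xv_{n}x^{*})}_{2}^{2}<\infty,$$
so for $\lambda$-a.e.\ $t$ these coefficients lie in $\ell^{2}$; by Riesz/Parseval $\tilde{\eta}_{x}^{t}$ has an $L^{2}(X,\lambda)$ density, hence $\tilde{\eta}_{x}^{t}\ll\lambda$ and thus $\eta_{x}\ll\lambda\otimes\lambda$. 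Polarization \eqref{abscont11} propagates this to $|\eta_{x,y}|\leq\eta_{x}+\eta_{y}$ for $x,y\in S$, and bilinearity extends it to $\eta_{\xi}$ for $\xi$ in the linear span of $S$. Using (i) to pick a countable dense family $\{\xi_{k}\}$ in $L^{2}(\mathcal{M})\ominus L^{2}(A)$ and the fact that $[\eta]$ is realized by $\sum_{k}2^{-k}\eta_{\xi_{k}}/\norm{\xi_{k}}_{2}^{2}$, we conclude $\eta\ll\lambda\otimes\lambda$.

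For $\lambda\otimes\lambda\ll\eta$: since $u$ is a Haar unitary generator, $\{u^{n}\}_{n\in\mathbb{Z}}$ is an orthonormal basis of $L^{2}(A)\cong L^{2}(\mathbb{T},\lambda)$. Applying the disintegration identity with $\xi=\zeta$ and $b=u^{n}$, condition (iii) forces $\tilde{\eta}_{\zeta}^{t}(u^{n})=0$ for every $n\neq 0$ and $\lambda$-a.e.\ $t$. The positive Borel measure $\tilde{\eta}_{\zeta}^{t}$ on $\mathbb{T}$ therefore has all non-trivial Fourier coefficients zero; by Fourier uniqueness, $\tilde{\eta}_{\zeta}^{t}=c(t)\lambda$, where $c(t):=\mathbb{E}_{A}(\zeta\zeta^{*})(t)\geq 0$ with $\int c\,d\lambda=\norm{\zeta}_{2}^{2}>0$. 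Thus $d\eta_{\zeta}(t,s)=c(t)\,d\lambda(t)\,d\lambda(s)$, exhibiting $\eta_{\zeta}$ as a product-type measure. Since $\eta_{\zeta}\ll\eta$, the $A$-bimodule action $\zeta\mapsto a\zeta b$ yields $|a(t)|^{2}|b(s)|^{2}c(t)\,d\lambda\otimes d\lambda\ll\eta$ for all $a,b\in A$; combined with the symmetry $\theta_{*}\eta\sim\eta$ applied to $\eta_{\zeta^{*}}=\theta_{*}\eta_{\zeta}$ (density $c(s)$), pooling shows $\eta$ dominates $(c(t)+c(s))\,d\lambda\otimes d\lambda$.

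The main obstacle is the final step: upgrading this to $\lambda\otimes\lambda\ll\eta$, which requires $\lambda(\{c=0\})=0$. I plan to handle this by a cut-down argument in the masa $A$: if the projection $p:=1_{\{c=0\}}\in A$ were nonzero, then $p\zeta=0$ (since $\norm{p\zeta}_{2}^{2}=\int_{\{c=0\}}c\,d\lambda=0$), yet the reduced bimodule $pL^{2}(\mathcal{M})p\ominus L^{2}(Ap)$ still inherits the structural data (i)--(ii) (after replacing $S$ by $pSp$) while no vector survives to witness (iii) on the reduction, giving the required contradiction. Once $c>0$ $\lambda$-a.e., mutual absolute continuity follows and $[\eta]=[\lambda\otimes\lambda]$.
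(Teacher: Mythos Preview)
Your overall architecture matches the paper's: (i)--(ii) give $\eta\ll\lambda\otimes\lambda$ via fibrewise analysis, and (iii) gives the reverse via Fourier uniqueness on fibres. Two steps need attention.

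\textbf{Fibrewise absolute continuity.} The inference ``$\sum_n|\tilde\eta_x^t(v_n)|^2<\infty$, hence by Riesz/Parseval $\tilde\eta_x^t\ll\lambda$'' fails for a general orthonormal basis $\{v_n\}\subset A$. A singular measure can have square-summable coefficients against one fixed basis: take $\mu=\delta_{1/2}$ against $v_1\equiv1$ together with any orthonormal basis of $\{1\}^\perp$ in $C[0,1]$ whose members vanish at $1/2$; then $\sum_n|\mu(v_n)|^2=1$ while $\mu\perp\lambda$. What is basis-independent is only the \emph{integrated} sum $\sum_n\|\mathbb E_A(xv_nx^*)\|_2^2$ (the Hilbert--Schmidt norm squared of $\mathbb E_A(x\,\cdot\,x^*)$), not the fibrewise sum. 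The easiest repair is to use this independence to pass to the exponential basis, where $\ell^2$ Fourier coefficients genuinely force an $L^2(\lambda)$ density (trigonometric polynomials being sup-dense in $C(\mathbb T)$). The paper takes a longer route (Prop.~\ref{finite_at_fibre_both} and Prop.~\ref{product_measure_at_each_step}): it shows the fibrewise bound holds for \emph{every} orthonormal sequence in $C[0,1]$ simultaneously on a common $\lambda$-conull set, and then exhibits an explicit orthonormal sequence blowing up on any putative singular part of $\tilde\eta_x^t$.

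\textbf{The cut-down argument.} This does not produce a contradiction. You correctly deduce $p\zeta=0$ for $p=1_{\{c=0\}}$, but the sentence ``no vector survives to witness (iii) on the reduction, giving the required contradiction'' is empty: condition (iii) is a \emph{hypothesis} on $A\subset\mathcal M$, not a consequence of (i)--(ii), so its failure on $Ap\subset p\mathcal Mp$ contradicts nothing you have established. The paper does not argue by reduction. It asserts $\lambda\otimes\lambda\ll\eta_\zeta$ directly (you are right that this amounts to $c>0$ $\lambda$-a.e.; the paper treats it as immediate from the disintegration), and then runs an exhaustion of $L^2(\mathcal M)\ominus L^2(A)$ by cyclic $A$-bimodules using Lemma~5.7 of \cite{MR2261688}: any bimodule orthogonal to $\overline{A\zeta A}^{\norm{\cdot}_2}$ contributes either no new measure class or some $\nu\perp\eta_\zeta$, and the first half of the proof forces $\nu\ll\lambda\otimes\lambda\sim\eta_\zeta$, hence $\nu=0$. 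Iterating countably often gives $[\eta]=[\eta_\zeta]=[\lambda\otimes\lambda]$. Note that once the first half is in place, $c>0$ a.e.\ alone already yields $\lambda\otimes\lambda\ll\eta_\zeta\ll\eta\ll\lambda\otimes\lambda$ without exhaustion; the content you are missing is a \emph{valid} argument for $c>0$ a.e., and the reduction sketch does not supply one.
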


We do not know whether the conditions in Thm. \ref{sum_cond} are necessary for the same conclusion to hold. In Thm. \ref{measure_to_sum_all_puk}, we provide an analogous condition which is necessary for the
\emph{left-right-measure} to be of the product class. In general, it is of interest to
know whether there exist masas for which $\eta\ll \lambda\otimes \lambda$ but
$[\eta]\neq [\lambda\otimes \lambda]$. Note that the
sum in Thm. \ref{sum_cond} is independent of the choice of the
orthonormal basis. This just follows by expanding elements of one
orthonormal basis with respect to another. Hence by making similar arguments, $(iii)$ in Thm. \ref{sum_cond} holds for any Haar unitary generator of $A$, if it holds for one Haar unitary generator. Conditions $(i)$ and
$(ii)$ in Thm. \ref{sum_cond} forces that $\eta\ll \lambda\otimes
\lambda$. To assure $\lambda\otimes \lambda\ll\eta$ we need
condition $(iii)$ $($Cor. \ref{leb_confirm}$)$. As it will become clear, these conditions are analogous to knowing a measure from the information of its Fourier coefficients. Condition $(iii)$ is an analogue of the fact that the Fourier coefficients of $\lambda$ are $0$ except for the zeroth coefficient. In this sense the operators $\mathbb{E}_{A}(xv_{n}x^{*})$ can be thought of as the `Fourier coefficients of the bimodule $\overline{AxA}^{\norm{\cdot}_{2}}$'.\\
\indent In order to motivate the conditions in Thm. \ref{sum_cond}, we cite some examples. Conditions $(i)$ and $(ii)$ first appeared in the study of radial masas in the free group factors $($see Thm. 3.1 \cite{MR1932708}$)$. In this case, the natural choice of the orthonormal basis is $v_{n}=\frac{\chi_{n}}{\norm{\chi_{n}}_{2}}$, $n\geq 0$, where $\chi_{n}=\sum_{w:\abs{w}=n}w$. The set $S$ consists of $w-\mathbb{E}_{A}(w)$, $1\neq w\in \mathbb{F}_{k}$, $k\geq 2$. Single generator masas in the free group factors clearly satisfies all the three conditions. In fact, for all masas exhibited in \S6 there is a nonzero vector for which $(iii)$ in Thm. \ref{sum_cond} is satisfied. However, in those examples $(i)$ won't be satisfied. \\
\indent The second class of examples comes from inclusion of groups. Suppose $\Gamma$ is a countable discrete icc group with $\Lambda<\Gamma$, such that $L(\Lambda)\subset L(\Gamma)$ is a singular masa. Assume further that $\Lambda$ is malnormal in $\Gamma$. It is obvious that $(i)$, $(ii)$ and $(iii)$ hold $($Thm. 9.5, Cor. 9.8 \cite{MR999999999}$)$.
Similar class of examples consists of freely complemented masas.\\
\indent Next class of examples comes from ergodic theory. Consider a countable infinite abelian group $\Gamma$ acting on $\prod_{\Gamma}(X,\mu)$ by Bernoulli shift, where $(X,\mu)$ is a probability space with at least two points. Then $L(\Gamma)\subset L^{\infty}(\prod_{\Gamma}(X,\mu))\rtimes \Gamma$ is a singular masa whose \emph{left-right-measure} is the class of product Haar measure on $\widehat{\Gamma}\times \widehat{\Gamma}$ $($Prop. 3.1 \cite{MR1940356}$)$. For any function
$f$ on the $\gamma$-th copy, $\gamma\in \Gamma$, such that $\mu(f)=0$ one has
\begin{align}
\nonumber \mathbb{E}_{L(\Gamma)}(fu_{\gamma^{\prime}}f^{*})=\tau(f\alpha_{\gamma^{\prime}}(f^{*}))u_{\gamma^{\prime}}, \gamma^{\prime}\in \Gamma,
\end{align}
where $u_{\gamma^{\prime}}$ are the canonical group unitaries in the crossed product, $\tau$ is the tracial state and $\alpha_{\gamma^{\prime}}$ denotes the automorphism corresponding to $\gamma^{\prime}$. It follows that $\sum_{\gamma^{\prime}\in \Gamma}\norm{\mathbb{E}_{L(\Gamma)}(fu_{\gamma^{\prime}}f^{*})}_{2}^{2}$ is finite. It is now clear that $(i)$ and $(ii)$ of Thm. \ref{sum_cond} hold. Since the maximal spectral type of the Bernoulli action is the normalized Haar measure on $\widehat{\Gamma}$, any function $f\in L^{2}(\prod_{\Gamma}(X,\mu))$ for which the maximal spectral type is attained,  satisfies $\mathbb{E}_{L(\Gamma)}(fu_{\gamma}f^{*})=0$ for all $\gamma\neq 1$. The last statement is equivalent to $(iii)$.

In order to prove Thm. \ref{sum_cond},  we need to prove some auxiliary lemmas.

\begin{lem}\label{identify_disintegrated_measure}
Let $\zeta_{1},\zeta_{2}\in L^{2}(\mathcal{M})$ be such that
$\mathbb{E}_{A}(\zeta_{1})=0=\mathbb{E}_{A}(\zeta_{2})$. Let
$\eta_{\zeta_{1},\zeta_{2}}$ denote the Borel measure on
$[0,1]\times
[0,1]$ defined in Eq. \eqref{measure_from_kappa}.\\
$1^{\circ}$. Then $\eta_{\zeta_{1},\zeta_{2}}$ admits
$(\pi_{i},\lambda)$-disintegrations $[0,1]\ni
t\mapsto\eta_{\zeta_{1},\zeta_{2}}^{t}$ and $[0,1]\ni
s\mapsto\eta_{\zeta_{1},\zeta_{2}}^{s}$, where $\pi_{i}$, $i=1,2$,
denotes the coordinate projections. Moreover,
\begin{align}
\nonumber \eta_{\zeta_{1},\zeta_{2}}^{t}([0,1]\times
[0,1])=\mathbb{E}_{A}(\zeta_{1}\zeta_{2}^{*})(t), \text{ }\lambda
\text{ a.e.}
\end{align}
$2^{\circ}$. Let $f\in C[0,1]$. Then the functions $[0,1]\ni
t\mapsto \eta_{\zeta_{1},\zeta_{2}}^{t}(1\otimes f), [0,1]\ni
s\mapsto \eta_{\zeta_{1},\zeta_{2}}^{s}(f\otimes 1)$ are in
$L^{1}([0,1],\lambda)$. \\
If $\zeta_{i}\in \mathcal{M}$ for $i=1,2$, then $[0,1]\ni t\mapsto
\eta_{\zeta_{1},\zeta_{2}}^{t}(1\otimes f), [0,1]\ni s\mapsto
\eta_{\zeta_{1},\zeta_{2}}^{s}(f\otimes 1)$ are in
$L^{\infty}([0,1],\lambda)$.\\
$3^{\circ}$. Let $b,w\in C[0,1]$. If $\mathbb{E}_{A}(\zeta_{1}
w\zeta_{2}^{*})\in L^{2}(A)$, then
\begin{align}
\nonumber \norm{\mathbb{E}_{A}(b\zeta_{1} w\zeta_{2}
^{*})}_{2}^{2}=\int_{0}^{1}\abs{b(t)}^{2}\abs{{\eta_{\zeta_{1},\zeta_{2}}^{t}(1\otimes
w)}}^{2}d\lambda(t).
\end{align}
\end{lem}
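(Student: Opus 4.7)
The plan is to test $\eta_{\zeta_1,\zeta_2}$ against elementary tensors $b\otimes f$ with $b,f\in C[0,1]$, rewrite each pairing $\langle b\zeta_1 f,\zeta_2\rangle$ via the extended trace and $\mathbb{E}_A$, and identify the disintegrated masses by invoking the fact that $C[0,1]$ is norm dense in $L^1([0,1],\lambda)$. For $1^\circ$, existence of the $(\pi_i,\lambda)$-disintegrations follows from the polarization formula \eqref{polarize}: each of the four positive measures $\eta_{\zeta_1\pm\zeta_2},\eta_{\zeta_1\pm i\zeta_2}$ admits such a disintegration by Thm.\ 3.2 of \cite{MR11932708}, and I arrange all four disintegrations to be defined on a common $\lambda$-conull set, whose complex linear combination gives $\{\eta^t_{\zeta_1,\zeta_2}\}$ and, symmetrically, $\{\eta^s_{\zeta_1,\zeta_2}\}$. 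For the mass formula, property (iii) of Defn.\ \ref{definition_of_disintegration} gives, for every $f\in C[0,1]$,
\begin{align*}
\int_0^1 f(t)\,\eta^t_{\zeta_1,\zeta_2}([0,1]\times[0,1])\,d\lambda(t)
&=\eta_{\zeta_1,\zeta_2}(f\otimes 1)=\langle f\zeta_1,\zeta_2\rangle\\
&=\tau(f\zeta_1\zeta_2^*)=\tau\bigl(f\,\mathbb{E}_A(\zeta_1\zeta_2^*)\bigr),
\end{align*}
using $\zeta_1\zeta_2^*\in L^1(\mathcal{M})$, $f\in A$, and the $A$-linearity and trace-preservation of $\mathbb{E}_A$. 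Density forces the claimed identification $\lambda$-a.e.

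For $2^\circ$, the $L^1$-bound follows by applying \eqref{polarization1} termwise to the disintegrations on a common conull set, which gives
\[
4\bigl|\eta^t_{\zeta_1,\zeta_2}(1\otimes f)\bigr|\le \bigl(\eta^t_{\zeta_1+\zeta_2}+\eta^t_{\zeta_1-\zeta_2}+\eta^t_{\zeta_1+i\zeta_2}+\eta^t_{\zeta_1-i\zeta_2}\bigr)(1\otimes |f|)
\]
a.e., and integrating against $\lambda$ together with property (iii) of Defn.\ \ref{definition_of_disintegration} and the parallelogram law bounds the total by $\|f\|_\infty(\|\zeta_1\|_2^2+\|\zeta_2\|_2^2)<\infty$. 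When $\zeta_i\in\mathcal{M}$, the analogous trace computation with test function $b\in C[0,1]$ yields
\begin{align*}
\int_0^1 b(t)\,\eta^t_{\zeta_1,\zeta_2}(1\otimes f)\,d\lambda(t)
=\tau\bigl(b\,\mathbb{E}_A(\zeta_1 f\zeta_2^*)\bigr)
=\int_0^1 b(t)\,\mathbb{E}_A(\zeta_1f\zeta_2^*)(t)\,d\lambda(t),
\end{align*}
so density pins down $\eta^t_{\zeta_1,\zeta_2}(1\otimes f)=\mathbb{E}_A(\zeta_1f\zeta_2^*)(t)$ a.e.; since $\zeta_1f\zeta_2^*\in\mathcal{M}$ the right side lies in $A=L^\infty([0,1],\lambda)$.

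Part $3^\circ$ then falls out: the identification $\eta^t_{\zeta_1,\zeta_2}(1\otimes w)=\mathbb{E}_A(\zeta_1w\zeta_2^*)(t)$ a.e.\ carries over verbatim under the weaker hypothesis $\mathbb{E}_A(\zeta_1w\zeta_2^*)\in L^2(A)$, since the test pairings against $b\in C[0,1]$ still make sense by Cauchy--Schwarz in $L^2$; $A$-linearity of $\mathbb{E}_A$ gives $\mathbb{E}_A(b\zeta_1w\zeta_2^*)=b\,\mathbb{E}_A(\zeta_1w\zeta_2^*)$, and the norm formula follows by computing $\|\cdot\|_2^2$ as integration of the pointwise square against $\lambda$. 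The only genuinely delicate point is the bookkeeping behind $2^\circ$: I must disintegrate the polarization identity on a single $\lambda$-conull set of $t$'s in order to transfer the pointwise estimate \eqref{polarization1} from $\eta_{\zeta_1,\zeta_2}$ to $\eta^t_{\zeta_1,\zeta_2}$; once this is arranged, every remaining assertion reduces to recognizing an $L^p$-class through its pairings with $C[0,1]$.
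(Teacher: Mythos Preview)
Your proof is correct and follows essentially the same route as the paper: polarization plus the known disintegration of the positive measures $\eta_\xi$ for $1^\circ$, the pointwise domination coming from \eqref{polarization1} (the paper phrases this via the Hahn decomposition and \eqref{abscont11}) for the $L^1$ bound in $2^\circ$, and the identification $\eta^t_{\zeta_1,\zeta_2}(1\otimes w)=\mathbb{E}_A(\zeta_1 w\zeta_2^*)(t)$ a.e.\ through pairings with $C[0,1]$ for $3^\circ$. The only cosmetic difference is that for $3^\circ$ the paper recovers the $L^2$ norm by taking $\sup_{\|a\|_2\le 1}$ of the pairings, whereas you invoke $A$-linearity of $\mathbb{E}_A$ to write $\mathbb{E}_A(b\zeta_1 w\zeta_2^*)=b\,\mathbb{E}_A(\zeta_1 w\zeta_2^*)$ and read off the norm directly; both arguments are equivalent.
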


\begin{proof}
$1^{\circ}$. That $\eta_{\zeta_{1},\zeta_{2}}$ admits the stated
disintegrations follows from Eq. \eqref{polarize}, Lemma 5.7
\cite{MR2261688} and Lemma 2.9 \cite{MR11932708}. The next
statement in $1^{\circ}$ follows from an
argument similar to the proof of Lemma 6.1 \cite{MR11932708}.\\
$2^{\circ}$. From Eq. \eqref{polarization1},
$\abs{\eta_{\zeta_{1},\zeta_{2}}}$ admits
$(\pi_{i},\lambda)$-disintegrations. Use Hahn decomposition of
measures and Lemma 3.6 \cite{MR11932708} to see that
$\abs{\eta_{\zeta_{1},\zeta_{2}}}^{t}=\abs{\eta_{\zeta_{1},\zeta_{2}}^{t}}$
for $\lambda$ almost all $t$. The function $t\mapsto
\eta_{\zeta_{1},\zeta_{2}}^{t}(1\otimes f)$ is clearly measurable
from Defn. \ref{definition_of_disintegration}, and from Eq.
\eqref{abscont11} we have
\begin{align}
\nonumber &\int_{0}^{1}\abs{\eta_{\zeta_{1},\zeta_{2}}^{t}(1\otimes f)}d\lambda(t)\leq \norm{f}\int_{0}^{1}\abs{\eta_{\zeta_{1},\zeta_{2}}^{t}}([0,1]\times [0,1])d\lambda(t)\\
\nonumber \leq& \norm{f}\left(\int_{0}^{1}\eta_{\zeta_{1}}^{t}([0,1]\times[0,1])d\lambda(t)+\int_{0}^{1}\eta_{\zeta_{2}}^{t}([0,1]\times[0,1])d\lambda(t)\right)\\
\nonumber
=&\norm{f}\left(\norm{\mathbb{E}_{A}(\zeta_{1}\zeta_{1}^{*})}_{1}+\norm{\mathbb{E}_{A}(\zeta_{2}\zeta_{2}^{*})}_{1}\right)<\infty.
\end{align}
\indent When $\zeta_{i}\in \mathcal{M}$ a similar argument shows that the
stated
functions are in $L^{\infty}([0,1],\lambda)$.\\
$3^{\circ}$. Since
\begin{align}
\nonumber\infty >\underset{a\in C[0,1], \norm{a}_{2}\leq 1}\sup\abs{\int_{0}^{1}a(t)b(t)\mathbb{E}_{A}(\zeta_{1} w\zeta_{2}^{*})(t)d\lambda(t)}&=\underset{a\in C[0,1], \norm{a}_{2}\leq 1}\sup\abs{\tau(ab\mathbb{E}_{A}(\zeta_{1}w\zeta_{2}^{*}))}\\
\nonumber&=\underset{a\in C[0,1], \norm{a}_{2}\leq 1}\sup \abs{\tau(ab\zeta_{1}w\zeta_{2}^{*})}\\
\nonumber&=\underset{\norm{a}_{2}\leq 1}{\underset{a\in C[0,1]}\sup}
\abs{\int_{0}^{1}a(t)b(t)\eta_{\zeta_{1},\zeta_{2}}^{t}(1\otimes
w)d\lambda(t)}
\end{align}
and $t\overset{g}\mapsto b(t)\eta_{\zeta_{1},\zeta_{2}}^{t}(1\otimes
w)$ is in $L^{1}(\lambda)$, so $g$ is in $L^{2}(\lambda)$ and
\begin{align}
\nonumber
\norm{\mathbb{E}_{A}(b\zeta_{1}w\zeta_{2}^{*})}_{2}^{2}=\int_{0}^{1}\abs{b(t)}^{2}\abs{\eta_{\zeta_{1},\zeta_{2}}^{t}(1\otimes
w)}^{2}d\lambda(t).
\end{align}
\end{proof}

Let $w:=\{w_{n}\}_{n=1}^{\infty}\subset C[0,1]$ be an orthonormal
basis of $L^{2}(A)$.

\begin{prop}\label{finite_at_fibre_both}
Let $x_{i}\in \mathcal{M}$ for $i=1,2$, be such that
$\mathbb{E}_{A}(x_{i})=0$. Let us suppose that
\begin{align}
\nonumber\sum_{n=1}^{\infty}\norm{\mathbb{E}_{A}(x_{1}
w_{n}x_{2}^{*})}_{2}^{2}<\infty.
\end{align}
If $w^{\prime}:=\{w_{n}^{\prime}\}_{n=1}^{\infty}$ be an orthonormal
sequence in $L^{2}(A)$ with $w_{n}^{\prime}\in C[0,1]$ for all $n$,
then there is a set $F(w,w^{\prime})\subset [0,1]$ which depends on
$w,w^{\prime}$ such that $\lambda(F(w,w^{\prime}))=0$ and for all
$t\in F(w,w^{\prime})^{c}$,
\begin{align}
\nonumber\sum_{n=1}^{\infty}\abs{\eta_{x_{1},x_{2}}^{t}(1\otimes
w^{\prime}_{n})}^{2}\leq\sum_{n=1}^{\infty}\abs{\eta_{x_{1},x_{2}}^{t}(1\otimes
w_{n})}^{2}<\infty.
\end{align}
\end{prop}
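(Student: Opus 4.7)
The plan is to realise the summability hypothesis as a Hilbert--Schmidt condition on a natural operator and then to exploit the resulting $L^{2}$-kernel to represent the fiber measures $\tilde{\eta}^{t}_{x_{1},x_{2}}$ pointwise.

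First, I would introduce the map $\Psi:L^{2}(A)\to L^{2}(A)$ defined on $A$ by $\Psi(f)=\mathbb{E}_{A}(x_{1}fx_{2}^{*})$; since $\norm{\Psi(f)}_{2}\leq\norm{x_{1}}\norm{x_{2}}\norm{f}_{2}$, this extends to a bounded operator on $L^{2}(A)$. The hypothesis $\sum_{n}\norm{\Psi(w_{n})}_{2}^{2}<\infty$ is precisely the statement that $\Psi$ is Hilbert--Schmidt, so there exists $K\in L^{2}([0,1]\times[0,1],\lambda\otimes\lambda)$ with $\Psi(f)(t)=\int_{0}^{1}K(t,s)f(s)d\lambda(s)$ for $\lambda$ almost all $t$, for every $f\in L^{2}(A)$. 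On the other hand, the definition of $\eta_{x_{1},x_{2}}$ together with $(\pi_{1},\lambda)$-disintegration gives, for $a,f\in C[0,1]$, $\tau(a\Psi(f))=\tau(ax_{1}fx_{2}^{*})=\int_{0}^{1}a(t)\eta^{t}_{x_{1},x_{2}}(1\otimes f)d\lambda(t)$; comparing with $\tau(a\Psi(f))=\int_{0}^{1}a(t)\Psi(f)(t)d\lambda(t)$ forces $\Psi(f)(t)=\eta^{t}_{x_{1},x_{2}}(1\otimes f)$ for $\lambda$ almost all $t$ (cf. Lemma \ref{identify_disintegrated_measure}, $3^{\circ}$).

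Next, I would assemble $F(w,w^{\prime})\subset[0,1]$ as the union of the countably many $\lambda$-null exceptional sets coming from the preceding two identities as $f$ ranges over the countable family $\{w_{n}\}\cup\{w_{n}^{\prime}\}$, together with $\{t:K(t,\cdot)\notin L^{2}([0,1],\lambda)\}$, which is $\lambda$-null by Fubini. For $t\notin F(w,w^{\prime})$, the identity $\eta^{t}_{x_{1},x_{2}}(1\otimes f)=\int_{0}^{1}K(t,s)f(s)d\lambda(s)$ holds simultaneously for $f=w_{n}$ and $f=w_{n}^{\prime}$, and $K(t,\cdot)\in L^{2}([0,1],\lambda)$. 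Parseval applied to $K(t,\cdot)$ against the orthonormal basis $\{\overline{w_{n}}\}$ of $L^{2}(A)$ yields $\sum_{n}\abs{\eta^{t}_{x_{1},x_{2}}(1\otimes w_{n})}^{2}=\norm{K(t,\cdot)}_{2}^{2}<\infty$, while Bessel applied against the orthonormal sequence $\{\overline{w_{n}^{\prime}}\}$ yields $\sum_{n}\abs{\eta^{t}_{x_{1},x_{2}}(1\otimes w_{n}^{\prime})}^{2}\leq\norm{K(t,\cdot)}_{2}^{2}$. Combining the two gives the asserted chain of inequalities.

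The main conceptual step is recognising the hypothesis as Hilbert--Schmidt-ness of $\Psi$ and thereby obtaining a genuine pointwise-in-$t$ $L^{2}$ representative for $\tilde{\eta}^{t}_{x_{1},x_{2}}$; I do not expect a real obstacle here, since $x_{1},x_{2}\in\mathcal{M}$ makes $\Psi$ bounded and the summability hypothesis is exactly the HS norm. The remaining work is bookkeeping to ensure that the countable collection of $\lambda$-null exceptional sets is packaged into a single $F(w,w^{\prime})$ on whose complement Parseval and Bessel may be invoked simultaneously.
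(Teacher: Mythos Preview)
Your proposal is correct but proceeds along a genuinely different route from the paper. The paper never introduces the kernel $K$ or invokes the Hilbert--Schmidt characterisation directly in the proof; instead it argues as follows: first it observes that for every $a\in C[0,1]$ the sum $\sum_{n}\norm{\mathbb{E}_{A}(ax_{1}w_{n}x_{2}^{*})}_{2}^{2}$ is finite and basis-independent, hence dominates the corresponding sum over the orthonormal \emph{sequence} $\{w_{n}^{\prime}\}$. Using Lemma~\ref{identify_disintegrated_measure}~$3^{\circ}$ this becomes, for every $a\in C[0,1]$,
\[
\int_{0}^{1}|a(t)|^{2}\sum_{n}\abs{\eta_{x_{1},x_{2}}^{t}(1\otimes w_{n}^{\prime})}^{2}\,d\lambda(t)\ \leq\ \int_{0}^{1}|a(t)|^{2}\sum_{n}\abs{\eta_{x_{1},x_{2}}^{t}(1\otimes w_{n})}^{2}\,d\lambda(t).
\]
The paper then upgrades $|a|^{2}$ to $\chi_{E}$ for an arbitrary Borel set $E$ by approximating compact sets from above by continuous functions $f_{l}\downarrow\chi_{E}$ and invoking regularity of $\lambda$; the resulting inequality $\int_{E}(\cdots)\leq\int_{E}(\cdots)$ for all Borel $E$ forces the pointwise a.e.\ inequality by standard measure theory.

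Your approach short-circuits this approximation argument: by identifying $\Psi$ as Hilbert--Schmidt you obtain the $L^{2}$-kernel $K$ outright, and then the pointwise statement is immediate from Parseval and Bessel applied to $K(t,\cdot)$. This is cleaner and also delivers for free the conclusion of Remark~\ref{derivative_sqint} (that $\tilde{\eta}^{t}_{x_{1},x_{2}}\ll\lambda$ with $L^{2}$ Radon--Nikodym derivative $K(t,\cdot)$), which the paper only extracts later after the more laborious proof of Proposition~\ref{product_measure_at_each_step}. The paper's route, on the other hand, stays entirely within the disintegration framework already set up and avoids appealing to the kernel representation of Hilbert--Schmidt operators, at the cost of the extra approximation step.
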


\begin{proof}
Note that the hypothesis implies that for any $a\in C[0,1]$,
$$\sum_{n=1}^{\infty}\norm{\mathbb{E}_{A}(ax_{1}
w_{n}x_{2}^{*})}_{2}^{2}<\infty$$ and this sum is independent of the
choice of the orthonormal basis. Therefore, for all $a\in C[0,1]$,
\begin{align}
\nonumber
\sum_{n=1}^{\infty}\norm{\mathbb{E}_{A}(ax_{1} w^{\prime}_{n}x_{2}^{*})}_{2}^{2}\leq\sum_{n=1}^{\infty}\norm{\mathbb{E}_{A}(ax_{1} w_{n}x_{2}^{*})}_{2}^{2}.
\end{align}
\indent Let $r\in A$ be a nonzero projection. Identify $r$ with a
measurable subset $E_{r}$ of $[0,1]$. We can assume $E_{r}$ is a
Borel set. We claim that
\begin{align}\label{proj_inte}
\int_{E_{r}}\sum_{n=1}^{\infty}\abs{\eta_{x_{1},x_{2}}^{t}(1\otimes
w_{n}^{\prime})}^{2}d\lambda(t)\leq
\int_{E_{r}}\sum_{n=1}^{\infty}\abs{\eta_{x_{1},x_{2}}^{t}(1\otimes
w_{n})}^{2}d\lambda(t).
\end{align}
If the claim is true, then by standard measure theory arguments we
are done.\\
\indent First assume $E_{r}$ is a compact set. Choose a sequence of
continuous functions $f_{l}$ such that $0\leq f_{l}\leq 1$ and
$f_{l}\downarrow \chi_{E_{r}}$ pointwise as $l\rightarrow \infty$.
Therefore by Lemma \ref{identify_disintegrated_measure} and monotone
convergence theorem, for all $l$ we have,
\begin{align}
\nonumber\int_{0}^{1}f^{2}_{l}(t)\sum_{n=1}^{\infty}\abs{\eta_{x_{1},x_{2}}^{t}(1\otimes w_{n}^{\prime})}^{2}d\lambda(t)&=\sum_{n=1}^{\infty}\norm{\mathbb{E}_{A}(f_{l}x_{1}w^{\prime}_{n}x_{2}^{*})}_{2}^{2}\\
\nonumber &\leq\sum_{n=1}^{\infty}\norm{\mathbb{E}_{A}(f_{l}x_{1}
w_{n}x_{2}^{*})}_{2}^{2}=\int_{0}^{1}f^{2}_{l}(t)\sum_{n=1}^{\infty}\abs{\eta_{x_{1},x_{2}}^{t}(1\otimes
w_{n})}^{2}d\lambda(t).
\end{align}
\indent Passing to limits, we see that Eq. \eqref{proj_inte} is true
whenever $E_{r}$ is compact. Now use regularity of $\lambda$ to see
that Eq. \eqref{proj_inte} is true for all Borel sets of positive
measure.
\end{proof}

Let $X=\prod_{n=1}^{\infty}C[0,1]$. Equip $X$ with the product topology. Then $X$ is known to be separable and metrizable. Every $\underline{f}\in X$ is a infinite tuple $\underline{f}=(f_{1},f_{2},\cdots)$. Also for a
sequence $\underline{f}^{(n)}\in X$,
$\underline{f}^{(n)}\rightarrow \underline{f}$ as
$n\rightarrow \infty$ implies that $f^{(n)}_{k}\rightarrow f_{k}$ in
$\norm{\cdot}_{\infty}$ for all $k\in \mathbb{N}$.\\
\indent Let $\mathcal{O}=\left\{\underline{f}\in X:
\{f_{k}\}_{k=1}^{\infty}\text{ is an orthonormal sequence in }
L^{2}([0,1],\lambda) \right\}$. Then $\mathcal{O}\subset X$ is a
closed set. Note that $\mathcal{O}$ is separable in the product topology.

\begin{prop}\label{product_measure_at_each_step}
Let $x\in \mathcal{M}$ be such that $\mathbb{E}_{A}(x)=0$. Let us
suppose that
\begin{align}
\nonumber\sum_{k=1}^{\infty}\norm{\mathbb{E}_{A}(x
w_{k}x^{*})}_{2}^{2}<\infty.
\end{align}
Then $\eta_{x}\ll \lambda\otimes \lambda$.
\end{prop}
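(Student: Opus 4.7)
The plan is to show absolute continuity by descending to the $(\pi_1,\lambda)$-disintegration. Since $\eta_x(E\times F)=\int_E\tilde{\eta}_x^t(F)\,d\lambda(t)$, Fubini gives $\eta_x\ll\lambda\otimes\lambda$ if and only if $\tilde{\eta}_x^t\ll\lambda$ for $\lambda$-almost every $t$. So it suffices to prove that, for a.e. $t$, the finite positive Radon measure $\tilde{\eta}_x^t$ on $[0,1]$ admits an $L^2(\lambda)$-density.

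Applying Lemma \ref{identify_disintegrated_measure}($3^\circ$) with $\zeta_1=\zeta_2=x$, $b\equiv 1$, and $w=w_k$ gives
\[
\|\mathbb{E}_{A}(xw_kx^*)\|_2^2=\int_0^1\bigl|\tilde{\eta}_x^t(w_k)\bigr|^2\,d\lambda(t).
\]
Summing over $k$ and interchanging sum and integral by Tonelli (all integrands are non-negative), the hypothesis produces $\int_0^1\sum_{k\ge 1}|\tilde{\eta}_x^t(w_k)|^2\,d\lambda(t)<\infty$, hence $(\tilde{\eta}_x^t(w_k))_{k\ge 1}\in\ell^2$ for $\lambda$-a.e. $t$. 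At such $t$, $\ell^2$-Cauchy--Schwarz lets one extend $w_k\mapsto\tilde{\eta}_x^t(w_k)$ to a bounded linear functional on $L^2(\lambda)$, and Riesz yields $h_t\in L^2(\lambda)$ with $\tilde{\eta}_x^t(w_k)=\int w_k\,h_t\,d\lambda$ for every $k$. However, merely agreeing on $\mathrm{span}\{w_k\}$ does not force the finite Radon measures $\tilde{\eta}_x^t$ and $h_t\,d\lambda$ to coincide, because $L^2$-density of $\mathrm{span}\{w_k\}$ is insensitive to $\lambda$-null sets, where a singular part of $\tilde{\eta}_x^t$ could be concealed. This is the main obstacle.

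I would sidestep it using the change-of-basis freedom in Proposition \ref{finite_at_fibre_both}: take $\{w'_n\}$ to be the normalised shifted Legendre polynomials, which are continuous, form an $L^2(\lambda)$-orthonormal basis, and whose linear span equals the polynomials and is therefore $\|\cdot\|_\infty$-dense in $C[0,1]$ by Weierstrass. The proposition transfers the $\ell^2$ bound to $\{w'_n\}$ off a further $\lambda$-null set. Repeating the Riesz step produces $h_t\in L^2(\lambda)$ with $\int w'_n\,h_t\,d\lambda=\tilde{\eta}_x^t(w'_n)$ for every $n$. The two finite Radon measures $\tilde{\eta}_x^t$ and $h_t\,d\lambda$ then agree on all polynomials, hence on $C[0,1]$ by Weierstrass, hence as Radon measures by the Riesz representation theorem. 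This gives $\tilde{\eta}_x^t\ll\lambda$ for $\lambda$-a.e. $t$, and hence $\eta_x\ll\lambda\otimes\lambda$ by the disintegration formula recalled above.
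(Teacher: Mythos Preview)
Your proof is correct and takes a cleaner route than the paper. Both arguments reduce to showing $\tilde{\eta}_x^t\ll\lambda$ for $\lambda$-a.e.\ $t$ and then invoke the disintegration (what the paper cites as Lemma~3.6 of \cite{MR11932708}) to conclude $\eta_x\ll\lambda\otimes\lambda$; and both use Proposition~\ref{finite_at_fibre_both} to move the fibrewise $\ell^2$-bound to new orthonormal sequences. The paper, however, applies that proposition to a countable dense family in the space $\mathcal{O}$ of all continuous orthonormal sequences, upgrades via a Fatou/dominated-convergence argument to the uniform estimate $\sup_{\underline{f}\in\mathcal{O}}\sum_k|\tilde{\eta}_x^t(1\otimes f_k)|^2<\infty$ for a.e.\ $t$, and then argues by contradiction: from any nonzero singular part of $\tilde{\eta}_x^t$ it explicitly builds a continuous $L^2(\lambda)$-unit vector concentrated near a $\lambda$-null compact set, which would make the supremum arbitrarily large. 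Your approach replaces this indirect step by one judicious choice of basis---the normalised shifted Legendre polynomials---so that the resulting Riesz representative $h_t\,d\lambda$ can be identified with $\tilde{\eta}_x^t$ directly via Stone--Weierstrass and Riesz--Markov uniqueness. Your route is shorter and more transparent; the paper's route incidentally establishes the uniform bound over all of $\mathcal{O}$, which is not needed for the proposition itself but does record that $\|d\tilde{\eta}_x^t/d\lambda\|_{L^2(\lambda)}^2\le\sum_k|\tilde{\eta}_x^t(1\otimes w_k)|^2$ for the original basis, as noted in Remark~\ref{derivative_sqint}.
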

\begin{proof}
Let $\{\underline{w}^{(m)}\}_{m=1}^{\infty}\subset \mathcal{O}$
be any countable dense set. From Prop. \ref{finite_at_fibre_both}
and Lemma \ref{identify_disintegrated_measure}, it follows that
there is a set $F\subset [0,1]$ with $\lambda(F)=0$ such that for
$t\in F^{c}$, $\eta_{x}^{t}$ is a finite measure and
\begin{align}
\nonumber\sum_{k=1}^{\infty}\abs{{\eta^{t}_{x}}(1\otimes
w^{(m)}_{k})}^{2}\leq \sum_{k=1}^{\infty}\abs{\eta_{x}^{t}(1\otimes
w_{k})}^{2}<\infty
\end{align}
for all $m\in \mathbb{N}$. Let
$\underline{v}=\{v_{k}\}_{k=1}^{\infty}\in \mathcal{O}$. There
exists a subsequence $\{\underline{w}^{(m_{j})}\}_{j=1}^{\infty}$
such that $\underline{w}^{(m_{j})}\rightarrow\underline{v}$ as
$j\rightarrow \infty$. Therefore for $t\in F^{c}$,
\begin{align}
\nonumber\sum_{k=1}^{\infty}\abs{{\eta^{t}_{x}}(1\otimes v_{k})}^{2}&=\sum_{k=1}^{\infty}\underset{j}\lim\abs{{\eta^{t}_{x}}(1\otimes w_{k}^{(m_{j})})}^{2}\text{ (by Dominated convergence)}\\
\nonumber&=\sum_{k=1}^{\infty}\underset{j}\liminf\abs{{\eta^{t}_{x}}(1\otimes w_{k}^{(m_{j})})}^{2}\\
\nonumber&\leq\underset{j}\liminf\sum_{k=1}^{\infty}\abs{{\eta^{t}_{x}}(1\otimes w_{k}^{(m_{j})})}^{2}\text{ (by Fatou's Lemma)}\\
\nonumber&\leq\sum_{k=1}^{\infty}\abs{{\eta^{t}_{x}}(1\otimes
w_{k})}^{2}<\infty\text{ (as }t\in F^{c}).
\end{align}
\indent Therefore for each $t\in F^{c}$,
\begin{align}\label{infinite}
\underset{\underline{f}\in \mathcal{O}}\sup\text{ }
\sum_{k=1}^{\infty}\abs{{\eta^{t}_{x}}(1\otimes f_{k})}^{2}\leq
\sum_{k=1}^{\infty}\abs{{\eta^{t}_{x}}(1\otimes w_{k})}^{2}<\infty.
\end{align}
\indent Fix $t\in F^{c}$. If $\tilde{\eta}_{x}^{t}$ contains a
nonzero part which is singular with respect to $\lambda$, then the
supremum on the left hand side of Eq. \eqref{infinite} is infinite.
Indeed, for simplicity assume $\tilde{\eta}_{x}^{t}\perp \lambda$.
Choose a compact set $K\subset [0,1]$ of almost full
$\tilde{\eta}_{x}^{t}$ measure such that $\lambda(K)=0$. Fix a large
positive number $N$. By regularity of $\lambda$, there is an open set
$U$ containing $K$ such that $\lambda(U)<\frac{1}{N^{8}}$. Using
compactness of $K$, we can find a finite number of open intervals
$(a_{i},b_{i})$ and small positive numbers $\delta_{i}$ for
$i=1,2,\cdots, m$, such that the open intervals
$\{(a_{i}-\delta_{i}, b_{i}+\delta_{i})\}_{i=1}^{m}$ are disjoint
and $K\subset \cup_{i=1}^{m}(a_{i},b_{i})\
\subset\cup_{i=1}^{m}(a_{i}-\delta_{i}, b_{i}+\delta_{i}) \subset
U$. Define
\begin{equation}
\nonumber f_{i}(s)=\begin{cases}
             N &\text{ if } a_{i}\leq s\leq b_{i},\\
             \frac{N}{\delta_{i}}(s-a_{i})+N &\text{ if }a_{i}-\delta_{i}\leq s\leq a_{i},\\
             -\frac{N}{\delta_{i}}(s-b_{i})+N &\text{ if }b_{i}\leq s\leq b_{i}+\delta_{i},\\
             0 &\text{ otherwise.}
         \end{cases}
\end{equation}
\indent Then $f=\sum_{i=1}^{m}f_{i}$ is continuous and
$\norm{f}_{2,\lambda}=O(\frac{1}{N^{3}})$. Now consider
$g=\frac{f}{\norm{f}_{2,\lambda}}$. Inductively construct an
orthonormal sequence in $C[0,1]$ with the first function as $g$,
orthogonal with respect to the $\lambda$ measure. It is now clear that
in this way the supremum in Eq. \eqref{infinite} can be made to exceed any large number.\\
\indent Consequently, it follows that for all $t\in F^{c}$,
\begin{align}\label{radon_deri_sqint}
\tilde{\eta}_{x}^{t}\ll \lambda.
\end{align}
Finally from Lemma 3.6 of \cite{MR11932708}, it follows that
$\eta_{x}\ll \lambda\otimes \lambda$.
\end{proof}

\begin{rem}\label{derivative_sqint}
Note that the proof of Prop. \ref{product_measure_at_each_step}
actually shows that $\tilde{\eta}_{x}^{t}\ll \lambda$ with
$\frac{d\tilde{\eta}_{x}^{t}}{d\lambda}\in L^{2}([0,1],\lambda)$ for
$\lambda$ almost all $t$.
\end{rem}

The set of finite signed measures on the measurable space
$(X,\sigma_{X})$ is a Banach space equipped with the total variation
norm $\norm{\cdot}_{t.v}$, also called the $L_{1}$-norm, which is
defined by $\norm{\mu}_{t.v}=\abs{\mu}(X)$, where $\abs{\mu}$
denotes the variation measure of $\mu$. It is well known that for
probability measures $\mu$ and $\nu$,
\begin{equation}\label{close_measure_formulae1}
\norm{\mu-\nu}_{t.v}=2\text{ }\underset{B\in \sigma_{X}}\sup
\abs{\mu(B)-\nu(B)}=\int_{X}\abs{f-g}d\gamma
\end{equation}
where $f,g$ are density functions of $\mu,\nu$ respectively with
respect to any $\sigma$-finite measure $\gamma$ dominating both
$\mu,\nu$ $($see for instance Eq. $(1.1)$ of \cite{MR810175}$)$. We are now in a position to prove Thm. \ref{sum_cond}.

\begin{proof}[Proof of Thm. \ref{sum_cond}.]
Fix a set $S\subset \mathcal{M}$ such that $\mathbb{E}_{A}(x)=0$ for
all $x\in S$, $\overline{\text{ span
}S}^{\norm{\cdot}_{2}}=L^{2}(A)^{\perp}$ and
\begin{align}
\nonumber \sum_{k=1}^{\infty}\norm{\mathbb{E}_{A}(x
u_{k}x^{*})}_{2}^{2}<\infty
\end{align}
for all $x\in S$, where $\{u_{k}\}_{k=1}^{\infty}\subset C[0,1]$ is
an orthonormal basis of $L^{2}(A)$. There is a vector $\zeta\in
L^{2}(A)^{\perp}$ such that $\norm{\zeta}_{2}=1$ and
$\eta_{\zeta}=\eta$. Choose a sequence $x_{n}\in
\text{ span }S$ such that $\norm{x_{n}}_{2}= 1$ and
$x_{n}\rightarrow \zeta$ in $\norm{\cdot}_{2}$ as $n\rightarrow
\infty$. Then $($see Lemma 3.10 \cite{MR11932708}$)$, we have
$\eta_{x_{n}}\rightarrow \eta_{\zeta} =\eta\text{ in }
\norm{\cdot}_{t.v}$. Write $x_{n}=\sum_{i=1}^{k_{n}}c_{i,n}y_{i,n}$
with $y_{i,n}\in S$, $c_{i,n}\in \C$ for all $1\leq i\leq k_{n}$ and
$n\in \mathbb{N}$. As $y_{i,n}\in S$, so for all $n\in \mathbb{N}$
and $1\leq i\leq k_{n}$,
\begin{align}
\nonumber
\sum_{k=1}^{\infty}\norm{\mathbb{E}_{A}(y_{i,n}u_{k}y_{i,n}^{*})}_{2}^{2}<\infty.
\end{align}
\indent From Prop. \ref{product_measure_at_each_step} we have
$\eta_{y_{i,n}}\ll \lambda\otimes \lambda$. But
$$\eta_{x_{n}}=\sum_{i=1}^{k_{n}}\abs{c_{i,n}}^{2}\eta_{y_{i,n}}+\sum_{i\neq
j=1}^{k_{n}}c_{i,n}\overline{c}_{j,n}\eta_{y_{i,n},y_{j,n}}.$$
\indent For $1\leq i\neq j\leq k_{n}$, the measures
$\eta_{y_{i,n},y_{j,n}}$ are possibly complex measures, but from Eq.
\eqref{abscont11}, $\abs{\eta_{y_{i,n},y_{j,n}}}\leq
\eta_{y_{i,n}}+\eta_{y_{j,n}}\ll \lambda\otimes \lambda$. Therefore
$\eta_{x_{n}}\ll \lambda\otimes \lambda$. Since $\eta_{x_{n}}$ is a
probability measure so from Eq. \eqref{close_measure_formulae1},
\begin{align}
\nonumber
\frac{1}{2}\norm{\eta_{x_{n}}-\eta_{x_{m}}}_{t.v}=\int_{[0,1]\times
[0,1]}\abs{f_{n}(t,s)-f_{m}(t,s)}d(\lambda\otimes
\lambda)(t,s)\rightarrow 0
\end{align}
as $n,m\rightarrow \infty$, where
$f_{n}=\frac{d\eta_{x_{n}}}{d(\lambda\otimes \lambda)}$. Thus there
is a function $f\in L^{1}([0,1]\times [0,1],\lambda\otimes \lambda)$
such that
\begin{align}
\nonumber \int_{[0,1]\times
[0,1]}\abs{f_{n}(t,s)-f(t,s)}d(\lambda\otimes
\lambda)(t,s)\rightarrow 0
\end{align}
as $n\rightarrow \infty$. As $\eta_{x_{n}}$ is a probability measure
for each $n$, so $\norm{f_{n}}_{L^{1}(\lambda\otimes \lambda)}=1$
for all $n$. Therefore $\norm{f}_{L^{1}(\lambda\otimes \lambda)}=1$
and $\eta_{x_{n}}\rightarrow fd(\lambda\otimes \lambda)$ in
$\norm{\cdot}_{t.v}$. By uniqueness of limits
$\eta=fd(\lambda\otimes \lambda)$.\\
\indent We will now use condition $(iii)$ of Thm. \ref{sum_cond} to
show that $\lambda\otimes \lambda \ll\eta$. Let $v\in A$ be the Haar
unitary corresponding to the function $t\mapsto e^{2\pi it}$. As noted after the statement of Thm. \ref{sum_cond},
we can assume $u=v$ in statement $(iii)$ of Thm. \ref{sum_cond}.
There is a nonzero vector $\xi_{0}\in L^{2}(\mathcal{M})\ominus L^{2}(A)$ such that $\mathbb{E}_{A}(\xi_{0}v^{n}\xi_{0}^{*})=0$ for all $n\neq 0$.
By $3^{\circ}$ of Lemma
\ref{identify_disintegrated_measure} we have
\begin{align}
\nonumber \eta_{\xi_{0}}^{t}(1\otimes v^{n})=0 \text{ for all }n\neq 0
\text{ and for }\lambda \text{ almost all }t.
\end{align}
Thus, the Fourier coefficients of the measure $\tilde{\eta}_{\xi_{0}}^{t}$ are $\tilde{\eta}_{\xi_{0}}^{t}(n)=0$, $n\neq 0$ for $\lambda$ almost all $t$. Thus $\tilde{\eta}_{\xi_{0}}^{t}$ is equal to a multiple of $\lambda$, for
$\lambda$ almost all $t$. %, since the Fourier coefficients uniquely determine the measure (by Stone-Weierstrass theorem).
The scalar above is the total mass of the measure and in this case is $\mathbb{E}_{A}(\xi_{0}\xi_{0}^{*})(t)$ for $\lambda$ almost all $t$ $($see Lemma \ref{identify_disintegrated_measure}$)$. A straight forward calculation shows that
\begin{align}
\nonumber \eta_{\xi_{0}}(a\otimes b)=\int_{[0,1]\times [0,1]}a(t)b(s)\mathbb{E}_{A}(\xi_{0}\xi_{0}^{*})(t)d(\lambda\otimes\lambda)(t,s), \text{ }a,b\in C[0,1].
\end{align}
Thus $\frac{d\eta_{\xi_{0}}}{d(\lambda\otimes \lambda)}=\mathbb{E}_{A}(\xi_{0}\xi_{0}^{*})\otimes 1$. Using Defn. \ref{definition_of_disintegration} it is obvious that $\lambda\otimes \lambda\ll\eta_{\xi_{0}}$ as well. Thus $[\lambda\otimes \lambda]=[\eta_{\xi_{0}}]$.\\
\indent Note that $\overline{A\xi_{0}A}^{\norm{\cdot}_{2}}\subseteq L^{2}(\mathcal{M})\ominus L^{2}(A)$ and $\overline{A\xi_{0}A}^{\norm{\cdot}_{2}}\cong\int_{[0,1]\times[0,1]}^{\oplus}\C_{t,s}d\eta_{\xi_{0}}$, where $\C_{t,s}=\C$ and associated statement about diagonalizabilty of $\mathcal{A}$ holds. There are two cases to consider. If  $\overline{A\xi_{0}A}^{\norm{\cdot}_{2}}= L^{2}(\mathcal{M})\ominus L^{2}(A)$, then $\eta_{\xi_{0}}$ is indeed the \emph{left-right measure} of $A$. In this case there is nothing to prove. If there is a nonzero vector $\xi_{1}\in L^{2}(\mathcal{M})\ominus L^{2}(A)\ominus \overline{A\xi_{0}A}^{\norm{\cdot}_{2}}$, then by Lemma 5.7 \cite{MR2261688}
\begin{align}
\nonumber &\text{either }\overline{A\xi_{0}A}^{\norm{\cdot}_{2}}\oplus \overline{A\xi_{1}A}^{\norm{\cdot}_{2}}\cong\int_{[0,1]\times[0,1]}^{\oplus}\h_{t,s}d(\eta_{\xi_{0}}+\nu), \text{ }0\neq\nu\perp\eta_{\xi_{0}},\\
\nonumber &\text{or }\overline{A\xi_{0}A}^{\norm{\cdot}_{2}}\oplus \overline{A\xi_{1}A}^{\norm{\cdot}_{2}}\cong\int_{[0,1]\times[0,1]}^{\oplus}\h_{t,s}d\eta_{\xi_{0}},
\end{align}
for a $\eta_{\xi_{0}}+\nu$ $($or $\eta_{\xi_{0}})$  measurable field of Hilbert spaces $\h_{t,s}$, along with associated statement about the diagonalizable algebra. In the first case, use direct integrals to conclude that there is a nonzero vector $\tilde{\xi}_{1}\in L^{2}(\mathcal{M})$ $($which is off course orthogonal to $L^{2}(A)\oplus \overline{A\xi_{0}A}^{\norm{\cdot}_{2}})$ such that $\eta_{\tilde{\xi}_{1}}=\nu$. But by an argument analogous to the first part of the proof it follows that $\nu\ll\lambda\otimes\lambda$. This forces that $\overline{A\xi_{0}A}^{\norm{\cdot}_{2}}\oplus \overline{A\xi_{1}A}^{\norm{\cdot}_{2}}\cong\int_{[0,1]\times[0,1]}^{\oplus}\h_{t,s}d\eta_{\xi_{0}}$. Since we have to repeat this argument at most countably many times to exhaust $L^{2}(\mathcal{M})\ominus L^{2}(A)$ $($and in the process the multiplicity function will only change from Lemma 5.7 \cite{MR2261688}$)$, the proof is complete.
\end{proof}

\begin{rem}
The sum in $(ii)$ of Thm. \ref{sum_cond} is the square of the Hilbert Schmidt norm of the operator $\mathbb{E}_{A}(x\cdot x^{*})$. This is precisely the reason the sum is independent of the choice of the orthonormal basis. Note that if
$\sum_{n\in \mathbb{Z}}\norm{\mathbb{E}_{A}(xv^{n}x^{*})}_{2}^{2}<\infty$ and $\sum_{n\in \mathbb{Z}}\norm{\mathbb{E}_{A}(yv^{n}y^{*})}_{2}^{2}<\infty$, where $x,y\in \mathcal{M}$, $\mathbb{E}_{A}(x)=0, \mathbb{E}_{A}(y)=0$ and $v$ is the standard Haar unitary generator of $A$, then $\sum_{n\in \mathbb{Z}}\norm{\mathbb{E}_{A}(xv^{n}y^{*})}_{2}^{2}<\infty$. To see this, one has to use the facts  $\frac{d{\eta_{x,y}}}{d(\lambda\otimes\lambda)},\frac{d{\eta_{x}}}{d(\lambda\otimes\lambda)},\frac{d{\eta_{y}}}{d(\lambda\otimes\lambda)}\in L^{2}(\lambda\otimes\lambda)$. Thus conditions $(i),(ii)$ in Thm. \ref{sum_cond} can be strengthened as: There exists a set $D\subset \mathcal{M}$ such that
$\mathbb{E}_{A}(x)=0$ for all $x\in D$, the set $D$ is dense in
$L^{2}(A)^{\perp}$ and
\begin{align}
\nonumber \sum_{k=1}^{\infty}\norm{\mathbb{E}_{A}(x_{1}
v_{k}x_{2}^{*})}_{2}^{2}<\infty \text{ for all }x_{1},x_{2}\in D,
\end{align}
for some orthonormal basis $\{v_{k}\}\subset C[0,1]$ of $L^{2}(A)$. One choice of $D$ is $\text{span }S$.
\end{rem}

When the \emph{left-right-measure} of a masa is the class of
product measure, the masa satisfies conditions very close to the ones
described in Thm. \ref{sum_cond}. This is the content of the next
theorem.\\
\indent For $\mathbb{N}_{\infty}\ni n\in Puk(A)$, let
$E_{n}\subseteq [0,1]\times [0,1]\setminus \Delta([0,1])$ denote the
set where the multiplicity function in $m.m(A)$ takes the value $n$.
It is well known that $E_{n}$ is $\eta$-measurable. Then
\begin{align}\label{chop_down_measure}
 L^{2}(\mathcal{M})\ominus L^{2}(A)&\cong\underset{n\in Puk(A)}\oplus L^{2}(E_{n},\eta_{\mid E_{n}})\otimes \mathbb{C}^{n}
\cong\underset{n\in Puk(A)}\oplus\int_{[0,1]\times
[0,1]}^{\oplus}\mathbb{C}^{n}_{t,s}d\eta_{\mid E_{n}}(t,s),
\end{align}
where $\mathbb{C}^{n}_{t,s}=\mathbb{C}^{n}$ for $(t,s)\in E_{n}$
when $n<\infty$, and
$\mathbb{C}^{\infty}=\mathbb{C}^{\infty}_{t,s}=l_{2}(\mathbb{N})$.
Under this decomposition one has
\begin{align}
\nonumber \A^{\prime}(1-e_{A})\cong \underset{n\in Puk(A)}\oplus
L^{\infty}(E_{n},\eta_{\mid E_{n}})\overline{\otimes} \mathcal{M}_{n}(\C),
\end{align}
where $\mathcal{M}_{\infty}(\C)$ is to be interpreted as
$\mathbf{B}(l_{2}(\mathbb{N}))$. Consequently, it follows that for
$\mathbb{N}_{\infty}\ni n\in Puk(A)$ the projections
$\chi_{E_{n}}\otimes 1_{n}$ lie in $\textbf{Z}(\A^{\prime})=\A$,
where $1_{n}$ denotes the identity of $\mathcal{M}_{n}(\C)$ if
$n<\infty$ and $1_{\infty}=1_{\textbf{B}(l_{2}(\mathbb{N}))}$. For
$n\in Puk(A)$ choose vectors $\zeta^{(n)}_{i}$, $1\leq i\leq n$
$(1\leq i<\infty$ if $n=\infty)$ so that the projections
$P_{i}^{(n)}:L^{2}(\mathcal{M})\mapsto
\overline{A\zeta^{(n)}_{i}A}^{\norm{\cdot}_{2}}$ are mutually
orthogonal, equivalent in $\A^{\prime}$, and
$\sum_{i=1}^{n}P_{i}^{(n)}=\chi_{E_{n}}\otimes 1_{n}$.

\begin{thm}\label{measure_to_sum_all_puk}
Let $A\subset \mathcal{M}$ be a masa. Let the left-right-measure of
$A$ be the class of product measure. Then there is a set $S\subset
L^{2}(\mathcal{M})\ominus L^{2}(A)$ such that $\text{ span }S$ is
dense in $L^{2}(A)^{\perp}$,
\begin{align}
\nonumber \sum_{n=1}^{\infty}\norm{\mathbb{E}_{A}(\zeta
w_{n}\zeta^{*})}_{2}^{2}<\infty \text{ for all }\zeta\in S
\end{align}
for some orthonormal basis $\{w_{n}\}_{n=1}^{\infty}\subset A$ of
$L^{2}(A)$, and there is a nonzero $\xi\in L^{2}(\mathcal{M})\ominus
L^{2}(A)$ such that $\mathbb{E}_{A}(\xi v^{n}\xi^{*})=0$ for all
$n\neq 0$, where $v$ is a Haar unitary generator of $A$.
\end{thm}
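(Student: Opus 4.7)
The argument is driven by a Parseval identity. If $\zeta \in L^{2}(\M)\ominus L^{2}(A)$ has $\eta_{\zeta} = F_{\zeta}\,d(\lambda\otimes\lambda)$ with $F_{\zeta} \in L^{2}(\lambda\otimes\lambda)$, then Fubini gives $F_{\zeta}(t,\cdot) \in L^{2}(\lambda)$ for $\lambda$-a.e.\ $t$; by disintegration (Lemma \ref{identify_disintegrated_measure}), $\eta_{\zeta}^{t}(1\otimes w_{n}) = \int w_{n}(s)F_{\zeta}(t,s)\,d\lambda(s)$ is the $n$-th Fourier coefficient of $F_{\zeta}(t,\cdot)$ in the orthonormal basis $\{w_{n}\}$ of $L^{2}(A) \cong L^{2}([0,1],\lambda)$. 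Applying Parseval fibrewise and integrating,
\begin{align*}
\sum_{n=1}^{\infty}\norm{\mathbb{E}_{A}(\zeta w_{n}\zeta^{*})}_{2}^{2} = \int_{0}^{1}\norm{F_{\zeta}(t,\cdot)}_{L^{2}(\lambda)}^{2}\,d\lambda(t) = \norm{F_{\zeta}}_{L^{2}(\lambda\otimes\lambda)}^{2} < \infty.
\end{align*}
Accordingly I define $S := \{\zeta \in L^{2}(\M)\ominus L^{2}(A) : d\eta_{\zeta}/d(\lambda\otimes\lambda) \in L^{2}(\lambda\otimes\lambda)\}$; the Hilbert--Schmidt--type condition demanded of elements of $S$ then holds automatically.

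For density of $\text{span}(S)$, fix $\xi \in L^{2}(A)^{\perp}$. Since $[\eta_{\xi}]\subseteq[\eta]=[\lambda\otimes\lambda]$, write $\eta_{\xi} = F_{\xi}\,d(\lambda\otimes\lambda)$ with $F_{\xi}\in L^{1}$. From \eqref{chop_down_measure}, $\A(1-e_{A})$ contains the diagonal $L^{\infty}([0,1]^{2},\eta)$, and for a Borel $Q\subseteq[0,1]^{2}$ the projection $q := \chi_{Q} \in \A(1-e_{A})$ acts on the direct integral by componentwise multiplication by $\chi_Q$. A short calculation of $\langle aq\xi b, q\xi\rangle$ yields $\eta_{q\xi} = \chi_{Q}F_{\xi}\,d(\lambda\otimes\lambda)$. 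Taking $Q_{k} := \{F_{\xi}\leq k\}$ and $q_{k} := \chi_{Q_{k}}$, the density of $\eta_{q_{k}\xi}$ is bounded by $k$ and hence lies in $L^{2}$, so $q_{k}\xi \in S$; moreover $\norm{\xi - q_{k}\xi}_{2}^{2} = \int_{\{F_{\xi} > k\}}F_{\xi}\,d(\lambda\otimes\lambda) \to 0$ by dominated convergence, so $\xi \in \overline{\text{span}(S)}^{\norm{\cdot}_{2}}$.

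For the vector $\xi$ satisfying the last conclusion, I construct it explicitly from the direct integral. For each $n\in Puk(A)$ let $g_{n} := d(\eta|_{E_{n}})/d(\lambda\otimes\lambda)$; since $[\eta] = [\lambda\otimes\lambda]$ forces $\eta|_{E_{n}}\sim(\lambda\otimes\lambda)|_{E_{n}}$, we have $g_{n}>0$ a.e.\ on $E_{n}$. Put
\begin{align*}
\xi^{(n)} := g_{n}^{-1/2}\chi_{E_{n}}\otimes e_{1}^{(n)} \in L^{2}(E_{n},\eta|_{E_{n}})\otimes \C^{n}, \qquad \xi := \sum_{n\in Puk(A)}\xi^{(n)}.
\end{align*}
Then $\norm{\xi^{(n)}}_{2}^{2} = \int_{E_n}g_n^{-1}g_n\,d(\lambda\otimes\lambda) = \lambda\otimes\lambda(E_{n})$ and $\sum_{n}\lambda\otimes\lambda(E_{n}) = \lambda\otimes\lambda(\Delta([0,1])^{c}) = 1$, so $\xi$ is a unit vector. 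The direct computation $\langle a\xi^{(n)}b, \xi^{(n)}\rangle = \int_{E_{n}}a(t)b(s)\,d(\lambda\otimes\lambda)$ gives $\eta_{\xi^{(n)}} = \chi_{E_{n}}\,d(\lambda\otimes\lambda)$; distinct multiplicity components are $A,A$-bimodule orthogonal so the cross terms vanish, yielding $\eta_{\xi} = \chi_{\cup_{n}E_{n}}\,d(\lambda\otimes\lambda) = \lambda\otimes\lambda$. Hence $\tilde{\eta}_{\xi}^{t} = \lambda$ for $\lambda$-a.e.\ $t$, so $\eta_{\xi}^{t}(1\otimes v^{n}) = \int e^{2\pi i ns}\,d\lambda(s) = 0$ for $n\neq 0$, and Lemma \ref{identify_disintegrated_measure}(3) gives $\mathbb{E}_{A}(\xi v^{n}\xi^{*}) = 0$ for all $n\neq 0$.

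The main obstacle is bookkeeping with the direct integral \eqref{chop_down_measure}: one must verify both that left multiplication by a diagonal projection $\chi_{Q}\in\A(1-e_{A})$ multiplies the density $F_{\xi}$ by $\chi_Q$, and that the explicit vector $g_{n}^{-1/2}\chi_{E_{n}}\otimes e_{1}^{(n)}$ has $\eta$-measure equal to $\chi_{E_{n}}\,d(\lambda\otimes\lambda)$. Both facts reduce to tracking how $A$ (acting via $a(t)$) and $JAJ$ (acting via $b(s)$) operate on each summand $L^{2}(E_{n},\eta|_{E_{n}})$ through the identification $\A^{\prime}(1-e_{A})\cong\bigoplus_{n}L^{\infty}(E_{n},\eta|_{E_{n}})\bar{\otimes}\mathcal{M}_{n}(\C)$.
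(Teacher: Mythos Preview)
Your proof is correct and shares the paper's core mechanism: the Parseval identity $\sum_{n}\norm{\mathbb{E}_{A}(\zeta w_{n}\zeta^{*})}_{2}^{2}=\norm{F_{\zeta}}_{L^{2}(\lambda\otimes\lambda)}^{2}$ for $\zeta$ with square-integrable density, and the observation that a vector with $\eta_{\xi}=\lambda\otimes\lambda$ has vanishing fibrewise Fourier coefficients. The two proofs diverge in how they populate $S$ and how they build $\xi$.

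For density, the paper first treats $Puk(A)=\{1\}$, where $L^{2}(A)^{\perp}\cong L^{2}([0,1]^{2},\lambda\otimes\lambda)$, and takes $S$ to be the continuous functions $\zeta$, whose densities $|\zeta|^{2}$ are automatically bounded; the general case is handled by decomposing into cyclic $A,A$-bimodules $\overline{A\zeta_{i}^{(n)}A}^{\norm{\cdot}_{2}}$ and repeating the argument with bounded measurable functions. Your route---truncating an arbitrary $F_{\xi}\in L^{1}$ via the diagonal projections $\chi_{\{F_{\xi}\le k\}}\in\A(1-e_{A})$---is more uniform across multiplicity components and avoids the case split, at the cost of invoking the direct-integral identification of $\A(1-e_{A})$ with $L^{\infty}$ more heavily. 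For the vector $\xi$, the paper in the $Puk(A)=\{1\}$ case simply takes $\zeta=\chi_{\{t\neq s\}}$ and sketches the general case; your construction $\xi=\sum_{n}g_{n}^{-1/2}\chi_{E_{n}}\otimes e_{1}^{(n)}$ is the explicit version of what the paper leaves implicit. One small point: to invoke Lemma~\ref{identify_disintegrated_measure}$(3^{\circ})$ you need $\mathbb{E}_{A}(\zeta w_{n}\zeta^{*})\in L^{2}(A)$ a priori, which the paper checks via a duality argument; in your setup this follows directly since $t\mapsto\int w_{n}(s)F_{\zeta}(t,s)\,d\lambda(s)$ lies in $L^{2}(\lambda)$ whenever $F_{\zeta}\in L^{2}(\lambda\otimes\lambda)$, but it is worth stating.
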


\begin{proof}
We will first consider the case $Puk(A)=\{1\}$. In this case,
\begin{align}
\nonumber L^{2}(\mathcal{M})\ominus L^{2}(A)\cong L^{2}([0,1]\times
[0,1]\setminus \Delta([0,1]),\lambda\otimes \lambda),
\end{align}
the left and the right actions of $A$ being given by
\begin{align}
\nonumber (af)(t,s)=a(t)f(t,s), (fb)(t,s)=b(s)f(t,s)
\end{align}
where $f\in L^{2}(A)^{\perp}$ and $a,b\in A$.\\
\indent Let $0\neq \zeta\in L^{2}(A)^{\perp}$ be a continuous
function. Then for $a,b\in C[0,1]$,
\begin{align}
\nonumber \langle a\zeta b,\zeta\rangle_{L^{2}(\mathcal{M})}=\langle a\zeta b,\zeta\rangle_{L^{2}(\lambda\otimes \lambda)}&=\int_{[0,1]\times[0,1]}a(t)b(s)\zeta(t,s)\overline{\zeta(t,s)}d\lambda(t)d\lambda(s)\\
\nonumber
&=\int_{[0,1]\times[0,1]}a(t)b(s)\abs{\zeta(t,s)}^{2}d\lambda(t)d\lambda(s).
\end{align}
\indent Therefore $\frac{d\eta_{\zeta}}{d(\lambda\otimes
\lambda)}=\abs{\zeta}^{2}$ which is bounded, in particular in
$L^{2}(\lambda\otimes \lambda)$. We claim that $\mathbb{E}_{A}(\zeta
b\zeta^{*})\in L^{2}(A)$ for any $b\in C[0,1]$. Fix $a\in C[0,1]$.
Then as $\tau$ extends to $L^{1}$,
\begin{align}\label{norm_cal}
\int_{0}^{1}a(t)\mathbb{E}_{A}(\zeta b\zeta^{*})(t)d\lambda(t)=\tau(a\mathbb{E}_{A}(\zeta b\zeta^{*}))=\tau(a\zeta b\zeta^{*})&=\int_{[0,1]\times[0,1]}a(t)b(s)d\eta_{\zeta}(t,s)\\
\nonumber&=\int_{[0,1]\times[0,1]}a(t)b(s)\abs{\zeta}^{2}(t,s)d\lambda(t)d\lambda(s)\\
\nonumber&=\int_{0}^{1}a(t)\lambda(\abs{\zeta}^{2}(t,\cdot)b)d\lambda(t).
\end{align}
\indent Now consider the function $[0,1]\ni t\overset{g}\mapsto
\lambda(\abs{\zeta}^{2}(t,\cdot)b)$. It is clearly
$\lambda$-measurable and
\begin{align}
\nonumber\int_{0}^{1}\abs{\lambda(\abs{\zeta}^{2}(t,\cdot)b)}^{2}d\lambda(t)=\int_{0}^{1}\abs{\int_{0}^{1}\abs{\zeta}^{2}(t,s)b(s)d\lambda(s)}^{2}d\lambda(t)&\leq\norm{b}^{2}\int_{0}^{1}\left(\int_{0}^{1}\abs{\zeta}^{2}(t,s)d\lambda(s)\right)^{2}d\lambda(t)\\
\nonumber&\leq\norm{b}^{2}\int_{0}^{1}\int_{0}^{1}\abs{\zeta}^{4}(t,s)d\lambda(t)d\lambda(s)<\infty.
\end{align}
Therefore from Eq. \eqref{norm_cal} we get,
\begin{align}
\nonumber\underset{a\in C[0,1], \norm{a}_{2}\leq 1}\sup\abs{\int_{0}^{1}a(t)\mathbb{E}_{A}(\zeta b\zeta^{*})(t)d\lambda(t)}&=\underset{a\in C[0,1], \norm{a}_{2}\leq 1}\sup\abs{\int_{0}^{1}a(t)\lambda(\abs{\zeta}^{2}(t,\cdot)b)d\lambda(t)}\\
\nonumber
&=\left(\int_{0}^{1}\abs{\lambda(\abs{\zeta}^{2}(t,\cdot)b)}^{2}d\lambda(t)\right)^{\frac{1}{2}}<\infty.
\end{align}
\indent Consequently, it follows that $\mathbb{E}_{A}(\zeta
b\zeta^{*})\in L^{2}(A)$ and
\begin{align}
\nonumber \norm{\mathbb{E}_{A}(\zeta
b\zeta^{*})}_{2}^{2}=\int_{0}^{1}\abs{\lambda(\abs{\zeta}^{2}(t,\cdot)b)}^{2}d\lambda(t).
\end{align}
\indent Let $v\in A$ be the Haar unitary corresponding to the
function $t\mapsto e^{2\pi it}$. Then $\{v^{n}\}_{n\in \mathbb{Z}}$
is an orthonormal basis of $L^{2}(A)$ and by Parseval's theorem,
\begin{align}
\nonumber \sum_{n\in\mathbb{Z}}\norm{\mathbb{E}_{A}(\zeta v^{n}\zeta^{*})}_{2}^{2}=\sum_{n\in\mathbb{Z}}\int_{0}^{1}\abs{\lambda(\abs{\zeta}^{2}(t,\cdot)v^{n})}^{2}d\lambda(t)&=\int_{0}^{1}\sum_{n\in\mathbb{Z}}\abs{\lambda(\abs{\zeta}^{2}(t,\cdot)v^{n})}^{2}d\lambda(t)\\
\nonumber
&=\int_{0}^{1}\int_{0}^{1}\abs{\zeta}^{4}(t,s)d\lambda(s)d\lambda(t)<\infty.
\end{align}
Thus $\{\zeta\in
L^{2}(A)^{\perp}:\sum_{n\in\mathbb{Z}}\norm{\mathbb{E}_{A}(\zeta
v^{n}\zeta^{*})}_{2}^{2}<\infty \}$ is dense in $L^{2}(A)^{\perp}$.\\
\indent In the general case, write
\begin{align}
\nonumber L^{2}(\mathcal{M})\ominus L^{2}(A)=\underset{n\in
Puk(A)}\oplus\left(\overset{n}{\underset{i=1}\oplus}\overline{A\zeta^{(n)}_{i}A}^{\norm{\cdot}_{2}}\right),
\end{align}
where $\zeta_{i}^{(n)}$ are vectors defined prior to the proof. For
each $n\in Puk(A)$ and $1\leq i\leq n$ $($or $1\leq i<n$ as the case
may be$)$, we consider the left and right actions of $A$ on
$\overline{A\zeta^{(n)}_{i}A}^{\norm{\cdot}_{2}}$ to reduce the
problem to a case similar to having one bicyclic
vector. In this case, one works with bounded measurable functions.\\
\indent Finally, let $\zeta\in L^{2}(\mathcal{M})$ correspond to the
function $\chi_{\{(t,s):t\neq s\}}$. Then
$\eta_{\zeta}=\lambda\otimes \lambda$. By arguments exactly similar
to the first part of the proof, conclude that $\mathbb{E}_{A}(\zeta
a\zeta ^{*})\in L^{2}(A)$ for all $a\in A$. But by $3^{\circ}$ of
Lemma \ref{identify_disintegrated_measure} we get,
\begin{align}
\nonumber \norm{\mathbb{E}_{A}(\zeta v^{n}\zeta
^{*})}_{2}^{2}=\int_{0}^{1}\abs{\eta_{\zeta}^{t}(1\otimes
v^{n})}^{2}d\lambda(t)=0 \text{ for all }n\neq 0.
\end{align}
\end{proof}

The proof of Thm. \ref{sum_cond} and Thm. \ref{measure_to_sum_all_puk} yield the following corollary.

\begin{cor}\label{leb_confirm}
For a masa $A\subset \mathcal{M}$, the
left-right-measure of $A$ contains the product class as a summand, if and only if,
there is a nonzero $\xi\in L^{2}(\mathcal{M})\ominus L^{2}(A)$
such that $\mathbb{E}_{A}(\xi v^{n}\xi^{*})=0$ for all $n\neq 0$,
where $v$ is a Haar unitary generator of $A$.
\end{cor}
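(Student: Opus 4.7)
The plan is to deduce both implications from the Fourier-type identity
\[
\norm{\mathbb{E}_{A}(\xi v^{n}\xi^{*})}_{2}^{2}=\int_{0}^{1}\abs{\eta_{\xi}^{t}(1\otimes v^{n})}^{2}d\lambda(t),
\]
supplied by $3^{\circ}$ of Lemma \ref{identify_disintegrated_measure}, combined with the elementary fact that a finite complex Borel measure on $[0,1]$ all of whose $n$-th Fourier coefficients vanish for $n\neq 0$ is a scalar multiple of $\lambda$.

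For the sufficiency direction $(\Leftarrow)$, I would take a nonzero $\xi$ satisfying the stated vanishing and read the displayed identity from left to right: the hypothesis forces $\eta_{\xi}^{t}(1\otimes v^{n})=0$ for $\lambda$-a.e.\ $t$ and every $n\neq 0$. Fourier inversion at the fibre makes $\tilde{\eta}_{\xi}^{t}$ a scalar multiple of $\lambda$, and the total-mass formula in $1^{\circ}$ of Lemma \ref{identify_disintegrated_measure} identifies that scalar as $\mathbb{E}_{A}(\xi\xi^{*})(t)$. Thus $\eta_{\xi}$ has Radon--Nikodym density $(t,s)\mapsto \mathbb{E}_{A}(\xi\xi^{*})(t)$ with respect to $\lambda\otimes\lambda$, and since $\xi\neq 0$ this density is strictly positive on a set of positive $\lambda$-measure. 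The cyclic subbimodule $\overline{A\xi A}^{\norm{\cdot}_{2}}$ therefore produces a summand of $L^{2}(\M)\ominus L^{2}(A)$ whose left-right-measure is absolutely continuous with respect to $\lambda\otimes\lambda$; this reproduces the $\xi_{0}$ portion of the proof of Thm. \ref{sum_cond} and exhibits the product class as a summand of $[\eta]$.

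For the necessity direction $(\Rightarrow)$, the plan is to pick a bicyclic vector $\zeta_{0}$ for a single multiplicity-$n$ component of the product-class summand $K\subseteq L^{2}(\M)\ominus L^{2}(A)$ guaranteed by the hypothesis, so that $\eta_{\zeta_{0}}\sim\lambda\otimes\lambda$. Setting $\rho=d\eta_{\zeta_{0}}/d(\lambda\otimes\lambda)$ and using the isomorphism $\overline{A\zeta_{0}A}^{\norm{\cdot}_{2}}\cong L^{2}([0,1]\times[0,1],\eta_{\zeta_{0}})$ of $A,A$-bimodules coming from Lemma 5.7 of \cite{MR2261688}, I would rescale $\zeta_{0}$ to the vector $\xi$ corresponding to $\rho^{-1/2}$, which is a legitimate $L^{2}$-vector because $\norm{\rho^{-1/2}}_{L^{2}(\eta_{\zeta_{0}})}^{2}=\int\rho^{-1}\rho\,d(\lambda\otimes\lambda)=1$. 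A direct calculation then gives $\eta_{\xi}=\lambda\otimes\lambda$ exactly, and the desired vanishing follows by reading the displayed identity in the opposite direction, using $\int_{0}^{1}v^{n}d\lambda=0$ for $n\neq 0$; this matches the calculation in the last paragraph of the proof of Thm. \ref{measure_to_sum_all_puk}.

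The most delicate step is the rescaling in the necessity direction: when $\rho^{-1/2}$ is unbounded one cannot take $\xi=\rho^{-1/2}\cdot\zeta_{0}$ via an obvious functional calculus on $\zeta_{0}$, and one has to appeal to the Hilbert-space identification above, verifying carefully that $\rho^{-1/2}\in L^{2}(\eta_{\zeta_{0}})$ and that the bimodule structure does transport correctly so that $\eta_{\xi}$ equals $\lambda\otimes\lambda$ rather than only lying in its class. Once $\xi\in L^{2}(\M)$ with $\eta_{\xi}=\lambda\otimes\lambda$ is secured, the rest of the proof is a routine application of $3^{\circ}$ of Lemma \ref{identify_disintegrated_measure} and the structural statements of Lemma 5.7 of \cite{MR2261688}.
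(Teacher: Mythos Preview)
Your plan is essentially the paper's own proof: for $(\Leftarrow)$ you reproduce the $\xi_{0}$ portion of the proof of Thm.~\ref{sum_cond}, and for $(\Rightarrow)$ you carry out explicitly the step that the paper states in one line (``there is a nonzero vector $\xi\in L^{2}(A)^{\perp}$ such that $\eta_{\xi}=\lambda\otimes\lambda$'' via Lemma~5.7 of \cite{MR2261688}) and then invoke the last paragraph of Thm.~\ref{measure_to_sum_all_puk}, exactly as the paper does.

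One caveat on your $(\Rightarrow)$ construction: if the product-class summand $K$ carries more than one multiplicity value, a bicyclic vector $\zeta_{0}$ for a \emph{single} multiplicity-$n$ component has $\eta_{\zeta_{0}}$ equivalent only to $(\lambda\otimes\lambda)_{\mid E_{n}}$, not to $\lambda\otimes\lambda$ on the full square; after your rescaling you would get $\eta_{\xi}=(\lambda\otimes\lambda)_{\mid E_{n}}$, and then $\tilde{\eta}_{\xi}^{t}(v^{n})=\int_{\{s:(t,s)\in E_{n}\}}e^{2\pi i n s}\,d\lambda(s)$ need not vanish. The paper avoids this by taking $\xi$ to be a measurable unit section in the full direct integral $\int^{\oplus}\h_{t,s}\,d(\lambda\otimes\lambda)$ of the product-class part (this is what Lemma~5.7 of \cite{MR2261688} supplies), which gives $\eta_{\xi}=\lambda\otimes\lambda$ directly without any rescaling. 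Your rescaling argument is perfectly valid once you first arrange $\eta_{\zeta_{0}}\sim\lambda\otimes\lambda$ on all of $[0,1]^{2}$, e.g.\ by summing suitably normalised cyclic vectors across the multiplicity components; but the direct-integral route is shorter.
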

\begin{proof}
$\Rightarrow$ By Lemma 5.7 \cite{MR2261688}, the \emph{left-right-measure} of $A$ is of the form $[\lambda\otimes\lambda+\nu]$ where either $\nu=0$ or $\nu\perp\lambda\otimes\lambda$. In any case, there is a nonzero vector $\xi\in L^{2}(A)^{\perp}$ such that $\eta_{\xi}=\lambda\otimes\lambda$. Now use the argument of last part of Thm. \ref{measure_to_sum_all_puk}. The reverse direction follows from the proof of Thm. \ref{sum_cond}.
\end{proof}

\begin{rem}
The proof of the previous theorem shows that if $A\subset \mathcal{M}$ is a masa satisfying the conditions of Thm. \ref{sum_cond} or the \emph{left-right-measure} of $A$ is the product class, then there is a measurable partition $\{E_{n}\}_{n\in Puk(A)}$ of $\Delta([0,1])^{c}$ such that
\begin{align}
\nonumber {}_{A}L^{2}(\mathcal{M})\ominus L^{2}(A)_{A}\cong \underset{n\in Puk(A)}\oplus \oplus_{i=1}^{n} {}_{A}L^{2}(E_{n},\lambda\otimes\lambda)_{A}
\end{align}
with the natural actions on the right hand side.
\end{rem}

Note that the \emph{measure-multiplicity invariant} can be defined for any diffuse abelian subalgebra of $\mathcal{M}$ in exactly the similar way defined in Defn. \ref{mminv}. If the diffuse abelian algebra is not a masa, then the diagonal will correspond to the $L^{2}$ completion of the relative commutant of the abelian algebra, so the multiplicity function along the diagonal will not be constantly $1$. All other properties of the invariant will remain the same. We will use this observation in the following result.

\begin{thm}
Let $A\subset \mathcal{M}$ be a masa such that the left-right-measure of $A$ is the class of product measure. Then for any diffuse algebra $B\subset A$, the left right-measure of $B$ restricted to the off-diagonal is the class of product measure and $N(B)^{\prime\prime}=B^{\prime}\cap \mathcal{M}= A$.
\end{thm}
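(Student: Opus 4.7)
The plan is to model the inclusion $B\subset A$ as $L^\infty(Y,\nu)\subset L^\infty([0,1],\lambda)$ via a measurable map $\pi:[0,1]\to Y$ with $\pi_*\lambda=\nu$; diffuseness of $B$ makes $\nu$ diffuse. The basic functorial observation is that for any $\zeta\in L^2(\mathcal{M})$, the $B$-analogue $\eta_\zeta^B$ on $Y\times Y$ of the measure in Eq.~\eqref{measure_from_kappa} equals $(\pi\times\pi)_*\eta_\zeta^A$, since the two agree against every elementary tensor $b_1\otimes b_2$ with $b_i\in C(Y)$ once $b_i$ is regarded as $b_i\circ\pi\in C([0,1])$. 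In particular $(\pi\times\pi)_*(\lambda\otimes\lambda)=\nu\otimes\nu$.

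First I would prove $B'\cap\mathcal{M}=A$. The inclusion $A\subset B'\cap\mathcal{M}$ is immediate. For the reverse, fix $x\in B'\cap\mathcal{M}$ and set $y=x-\mathbb{E}_A(x)$. Since $B\subset A$, $\mathbb{E}_A$ is $B$-bimodular, so $y\in B'\cap\mathcal{M}$ and $\hat y\in L^2(\mathcal{M})\ominus L^2(A)$. The product-class hypothesis gives $\eta_y^A\ll\lambda\otimes\lambda$, hence $\eta_y^B\ll\nu\otimes\nu$, and the latter vanishes on $\Delta(Y)$ because $\nu$ is diffuse. On the other hand, the relation $by=yb$ for $b\in B$ combined with the identity $\int b_1(y_1)b_2(y_2)\,d\eta_y^B=\langle b_1 yb_2,y\rangle=\langle b_1 b_2 y,y\rangle$ (for all $b_1,b_2\in C(Y)$) forces $\eta_y^B$ to be concentrated on $\Delta(Y)$. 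Combining, $\eta_y^B=0$, so $\|y\|_2^2=\eta_y^B(Y\times Y)=0$, whence $y=0$ and $x=\mathbb{E}_A(x)\in A$.

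With $B'\cap\mathcal{M}=A$ in hand, the off-diagonal part of the left-right-measure of $B$ is exactly the left-right-measure class of $L^2(\mathcal{M})\ominus L^2(A)$ viewed as a $B,B$-bimodule. The computation above already shows that for every $\zeta\in L^2(\mathcal{M})\ominus L^2(A)$ one has $\eta_\zeta^B\ll\nu\otimes\nu$. Conversely, pick $\xi\in L^2(\mathcal{M})\ominus L^2(A)$ with $\eta_\xi^A=\lambda\otimes\lambda$; such a vector exists because the left-right-measure of $A$ is the product class (cf.\ Cor.~\ref{leb_confirm} and the argument in the proof of Thm.~\ref{measure_to_sum_all_puk}). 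Then $\eta_\xi^B=(\pi\times\pi)_*(\lambda\otimes\lambda)=\nu\otimes\nu$, giving the reverse absolute continuity. Hence the off-diagonal part of the left-right-measure of $B$ equals $[\nu\otimes\nu]$, the product class on $Y\times Y$.

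Finally, for $N(B)''=A$: unitaries in $A$ commute with $B\subset A$, so they normalize $B$ and $A\subset N(B)''$. Conversely, any $u\in N(B)$ satisfies $u(B'\cap\mathcal{M})u^*=B'\cap\mathcal{M}$, which by the previous step reads $uAu^*=A$, so $u\in N(A)$. The product-class hypothesis makes $A$ singular (Thm.~5.5 of \cite{MR11932708}), forcing $u\in A$; hence $N(B)\subset A$ and $N(B)''\subset A$. The main technical obstacle I anticipate is the push-forward step in the proof of $B'\cap\mathcal{M}=A$—specifically, the rigorous passage from $by=yb$ (together with the definition of $\eta_y^B$) to the statement that $\eta_y^B$ is supported on $\Delta(Y)$. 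Once this is established, diffuseness of $\nu$ collapses everything immediately.
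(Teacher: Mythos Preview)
Your argument is correct and takes a genuinely different route from the paper's. The paper establishes $B'\cap\mathcal{M}=A$ by invoking Thm.~\ref{measure_to_sum_all_puk} to obtain the dense set $S\subset L^2(A)^\perp$ with the Hilbert--Schmidt property for $\mathbb{E}_A(\zeta\,\cdot\,\zeta^*)$, then transfers this to $\sum_n\|\mathbb{E}_B(\zeta w^n\zeta^*)\|_2^2<\infty$ and re-runs the machinery of Prop.~\ref{product_measure_at_each_step} for $B$ to force $\mathbb{E}_{B'\cap\mathcal{M}}(\zeta)=0$ for all $\zeta\in S$. Your push-forward observation $\eta_\zeta^B=(\pi\times\pi)_*\eta_\zeta^A$ bypasses all of this: it transfers $\eta_y^A\ll\lambda\otimes\lambda$ to $\eta_y^B\ll\nu\otimes\nu$ in one line, and the commutation $by=yb$ collapses $\eta_y^B$ onto the diagonal (the step you flag as the main obstacle is in fact routine: the identity $\int b_1(y_1)b_2(y_2)\,d\eta_y^B=\int (b_1b_2)(y_1)\,d\eta_y^B$ for all $b_1,b_2$ says $\eta_y^B=\tilde\Delta_*(\pi_1)_*\eta_y^B$). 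For $N(B)''=A$ the paper argues via the bimodule category $C_d(B)$ from \cite{MR11932708}, whereas your observation that $u\in N(B)$ implies $u(B'\cap\mathcal{M})u^*=B'\cap\mathcal{M}$, hence $u\in N(A)\subset A$, is shorter and more elementary. The paper's approach has the virtue of staying within the analytic framework of \S2 and illustrating that Prop.~\ref{product_measure_at_each_step} does not need $A$ to be maximal abelian; your approach has the virtue of isolating the single measure-theoretic fact (push-forward of absolute continuity) that drives everything. One cosmetic point: $b_i\circ\pi$ lands in $L^\infty([0,1],\lambda)$, not $C[0,1]$, but the extension of Eq.~\eqref{measure_from_kappa} to bounded Borel functions is standard and harmless.
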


\begin{proof}
Since the \emph{left-right-measure} of $A$ is $[\lambda\otimes\lambda]$, so by Thm. \ref{measure_to_sum_all_puk}, there is a set $S$ orthogonal to $L^{2}(A)$, such that $\text{span }S$ is dense in $L^{2}(A)^{\perp}$, $\sum_{n\in \mathbb{Z}}\norm{\mathbb{E}_{A}(\zeta v^{n}\zeta^{*})}_{2}^{2}<\infty$ for all $\zeta\in S$, where $v$ is the standard Haar unitary generator of $A$. Moreover, the proof of Thm. \ref{measure_to_sum_all_puk} shows that we can assume $\frac{d\eta_{\zeta}}{d(\lambda\otimes\lambda)}$ is bounded $\lambda\otimes\lambda$ almost everywhere.\\
\indent Arguments similar to the proof of Thm. \ref{measure_to_sum_all_puk} show that $\mathbb{E}_{A}(\zeta\cdot\zeta^{*})$ defines a Hilbert Schmidt operator on $L^{2}(A)$. Fix a diffuse subalgebra $B\subset A$. Let $w\in B$ be a Haar unitary generator of $B$. Since $\mathbb{E}_{A}(\zeta\cdot\zeta^{*})$ is Hilbert Schmidt, so $\sum_{n\in \mathbb{Z}}\norm{\mathbb{E}_{A}(\zeta w^{n}\zeta^{*})}_{2}^{2}<\infty$ and since $\norm{\mathbb{E}_{B}(\cdot)}_{2}\leq \norm{\mathbb{E}_{A}(\cdot)}_{2}$ so
\begin{align}
\nonumber \sum_{n\in \mathbb{Z}}\norm{\mathbb{E}_{B}(\zeta w^{n}\zeta^{*})}_{2}^{2}<\infty, \text{for all }\zeta\in S.
\end{align}
\indent Assuming $B=L^{\infty}([0,1],\lambda)$ where $\lambda$ is Lebesgue measure and using arguments required to prove Prop. \ref{product_measure_at_each_step}, one finds $\eta_{\zeta,B}\ll\lambda\otimes\lambda$ for all $\zeta\in S$. The extra suffix refers to the fact that we are considering measures with respect to $B$. (It should be noted that the proof of Prop. \ref{product_measure_at_each_step} nowhere uses the fact that $A$ is a masa.) Thus $\mathbb{E}_{B^{\prime}\cap \mathcal{M}}(\zeta)=0$ for all $\zeta\in S$. Indeed, write $\zeta=\zeta_{1}+\zeta_{2}$ with
$\mathbb{E}_{B^{\prime}\cap \mathcal{M}}(\zeta)=\zeta_{1}$ and $\mathbb{E}_{B^{\prime}\cap \mathcal{M}}(\zeta_{2})=0$. For $a,b\in B$ one has
\begin{align}
\nonumber\langle a\zeta_{1}b,\zeta_{2}\rangle=\tau(a\zeta_{1}b\zeta_{2}^{*})=\tau(\mathbb{E}_{B^{\prime}\cap \mathcal{M}}(a\zeta_{1}b)\zeta_{2}^{*})=0.
\end{align}
Thus $\eta_{\zeta,B}=\eta_{\zeta_{1,B}}+\eta_{\zeta_{2,B}}$. But $\eta_{\zeta_{1,B}}\ll\tilde{\Delta}_{*}\lambda$ (for $\tilde{\Delta}$ see \S 1) with the Radon-Nikodym derivative given by $\mathbb{E}_{B}(\zeta_{1}\zeta_{1}^{*})$. Consequently, $\zeta_{1}=0$. Thus $S\subset L^{2}(B^{\prime}\cap \mathcal{M})^{\perp}$ and hence $L^{2}(A)^{\perp}\subseteq L^{2}(B^{\prime}\cap \mathcal{M})^{\perp}$. It follows that $B^{\prime}\cap \mathcal{M}=A$.\\
\indent By arguments similar to the proof of Thm. \ref{sum_cond}, it follows that any member in \emph{left-right-measure} of $B$ restricted to the off-diagonal is dominated by $\lambda\otimes\lambda$.\\
\indent There is a vector $0\neq\xi\in L^{2}(A)^{\perp}$ such that $\mathbb{E}_{A}(\xi v^{n}\xi^{*})=0$ for all $n\neq 0$. It follows that $\mathbb{E}_{A}(\xi a\xi^{*})=0$ for all $a\in A$ with $\tau(a)=0$. Consequently, $\mathbb{E}_{A}(\xi w^{n}\xi^{*})=0$ and hence $\mathbb{E}_{B}(\xi w^{n}\xi^{*})=0$ for all $n\neq 0$. By arguments made in the last part of the proof of Thm. \ref{sum_cond}, it follows that the \emph{left-right-measure} of $B$ restricted to the off diagonal is the class of product measure.\\
\indent Finally, if $0\neq\zeta_{0}\in L^{2}(N(B)^{\prime\prime})$, then $\overline{B\zeta_{0}B}^{\norm{\cdot}_{2}}\in C_{d}(B)$ (Prop. 3.11 \cite{MR11932708}). Thus by using Lemma 5.7 \cite{MR2261688}, it follows $\eta_{\zeta_{0,B}}$ must be supported on the diagonal; equivalently $\mathbb{E}_{B^{\prime}\cap \mathcal{M}}(\zeta_{0})=\zeta_{0}$. Thus $\zeta_{0}\in L^{2}(A)$. This completes the proof.
\end{proof}

\section{Tauer Masas in the Hyperfinite $\rm{II}_{1}$ Factor}

In this section, we will calculate the \emph{left-right-measures} of
certain Tauer masas in the hyperfinite $\rm{II}_{1}$ factor
$\mathcal{R}$. The examples of Tauer masas in which we are
interested are directly taken from \cite{MR2302742}.

\begin{defn}(White)
A masa $A$ in $\mathcal{R}$ is said to be a \emph{Tauer masa}, if
there exists a sequence of finite type $\rm{I}$ subfactors
$\{\mathcal{N}_{n}\}_{n=1}^{\infty}$ such that,\\
$(i)$ ${\mathcal{N}_{n}} \subset {\mathcal{N}_{n+1}}$ for all $n$,\\
$(ii)$$(\cup _{n=1}^{\infty}{\mathcal{N}_{n}})^{\prime\prime} = \mathcal{R}$,\\
$(iii)$ $A_{n}=A\cap {\mathcal{N}_{n}}$ is a masa in ${\mathcal{N}_{n}}$
for every $n$.
\end{defn}
This allows one to write the structure of every Tauer masa $A$ in
$\mathcal{R}$ with respect to the chain
$\{\mathcal{N}_{n}\}_{n=1}^{\infty}$ as follows. Switching to the
notation of tensor products, the above definition means that we can
find finite type $\rm{I}$ subfactors
$\{{\mathcal{M}}_{n}\}_{n=1}^{\infty}$ such that, ${\mathcal{N}_{n}=
\overset n{\underset {r=1}\otimes} {\mathcal{M}}_{r}}$ for every
$n$. For $m > n$, the $m$-th finite dimensional approximation of $A$
can be written in terms of the $n$-th one as,
\begin{equation}\label{Tauerst}
A_{m}=\bigoplus_{e\in \mathcal{P}(A_{n})}e \otimes A_{m,n}^{(e)},
\end{equation}
where the direct sum is over the set of minimal projections
$\mathcal{P}(A_{n})$ in $A_{n}$ and $A_{m,n}^{(e)}$ is a masa in
$\overset m{\underset{r=n+1}\otimes} {\mathcal{M}_{r}}$. Note that
the Cartan masa arising from the infinite tensor product of diagonal
matrices inside the hyperfinite $\rm{II}_{1}$ factor is a Tauer
masa. In Thm. 4.1 \cite{MR2253595}, White had shown that the
\emph{Puk{\'a}nszky invariant} of every Tauer masa is $\{1\}$. In
fact, it follows from his proof that the bicyclic vector for any
Tauer masa can be chosen to be an operator from $\mathcal{R}$ itself.\\
\indent Sinclair and White \cite{MR2302742} has exhibited a
continuous path of singular masas in $\mathcal{R}$, no two of which
can be connected by automorphisms of $\mathcal{R}$. We are
interested in two masas that correspond to the end points of this
path. For all Tauer masas, it is clear that the Cantor set is the
natural space where we have to build the measures. For ease of
calculation, we need to index the minimal projections in the
approximating stages in an appropriate fashion.
It is now time to introduce some notation.\\

\noindent $1^{\circ}\text{ }\mathbf{Notation:}$ If $\mathcal{N}_{n}=
\overset n {\underset{r=1}\otimes}
{\mathcal{M}}_{k_{r}}(\mathbb{C})$, then the minimal projections of
$A_{n}$ will be denoted by ${}^{(n)}f_{\underline{t}(n)}$ , where
$\underline{t}(n)=(t_{1},t_{2},\cdots,t_{n})$ with $1\leq t_{i}\leq
k_{i}$, $1\leq i\leq n$. The convention that we follow is
$${}^{(n)}f_{(t_{1},t_{2},\cdots,t_{n})}={}^{(n-1)}f_{(t_{1},t_{2},\cdots,t_{n-1})}\otimes {}^{(n)}e^{(t_{1},t_{2},\cdots,t_{n-1})}_{t_{n}},$$
where ${}^{ (n)}e^{(t_{1},t_{2},\cdots,t_{n-1})}_{t_{n}}$ are the
minimal projections of the algebra
$A^{(t_{1},t_{2},\cdots,t_{n-1})}_{n,n-1}$, in accordance with Eq.
\eqref{Tauerst}. The matrix units corresponding to this family of
minimal projections will be denoted by
${}^{(n)}f_{\underline{t}(n),\text{ }\underline{s}(n)}$ and we will
understand ${}^{(n)}f_{\underline{t}(n),\text{
}\underline{t}(n)}={}^{(n)}f_{\underline{t}(n)}$. For two tuples
$(t_{1},t_{2},\cdots,t_{n})$ and $(s_{1},s_{2},\cdots,s_{n})$ such
that $t_{i}=s_{i}\text{ for }1\leq i\leq n-1$ and $t_{n}\neq s_{n}$,
we will write ${}^{(n)}f_{\underline{t}(n),\text{
}\underline{s}(n)}={}^{(n)}f_{(\cdot,t_{n}),(\cdot,s_{n})}$.\\

\noindent $2^{\circ}$ \textbf{Notation:} For any two subsets $S,T
\subseteq \mathcal{M}$, we will denote by $S\cdot T$ the set
$span\{ab : a$ $\in$ $S,$$b$ $\in$ $T$$\}$. The normalized trace of
$\mathcal{M}_{n}(\C)$ will be denoted by $tr_{n}$. The unique normal
tracial state of the hyperfinite factor $\mathcal{R}$ will be
denoted by $\tau_{\mathcal{R}}$. This trace $\tau_{\mathcal{R}}$
when restricted to $A$ gives rise to a measure on a Cantor set which
will also be denoted by $\tau_{\mathcal{R}}$.\\
\indent Recall from \cite{MR703810} that
two subalgebras $B,C$ in a finite factor $N$ are called orthogonal with respect
to the unique normal tracial state
$\tau_{N}$, if $\tau_{N}(bc)=\tau_{N}(b)\tau_{N}(c)$ for all $b\in B$, $c\in C$.
The next lemma is very well known but we record it for
convenience.
\begin{lem}\label{lemma:orth}
If A, B are two masas in $\mathcal{M}_{n}(\C)$ orthogonal with
respect to the normalized trace $tr_{n}$, then $A\cdot
B=\mathcal{M}_{n}(\C)$.
\end{lem}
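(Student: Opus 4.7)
The plan is to produce a dimension count by exhibiting $n^2$ linearly independent elements inside $A\cdot B$, using the trace orthogonality to make the elements orthogonal in the Hilbert--Schmidt inner product.

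First I would pick natural bases. Since $A$ is a masa in $\mathcal{M}_{n}(\mathbb{C})$, it is isomorphic to $\mathbb{C}^{n}$, so its minimal projections $\{e_{1},\dots,e_{n}\}$ are mutually orthogonal rank one projections summing to $1$, each of trace $tr_{n}(e_{i})=1/n$; similarly choose the minimal projections $\{f_{1},\dots,f_{n}\}$ of $B$, with $tr_{n}(f_{j})=1/n$. Then $\{e_{i}\}$ is a basis of $A$ and $\{f_{j}\}$ is a basis of $B$, and by definition $\{e_{i}f_{j}:1\le i,j\le n\}\subset A\cdot B$.

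Next I would show that this family of $n^{2}$ vectors is orthogonal with respect to the Hilbert--Schmidt inner product $\langle x,y\rangle=tr_{n}(y^{*}x)$ on $\mathcal{M}_{n}(\mathbb{C})$. The computation is short: using $e_{i}^{*}=e_{i}$, $f_{j}^{*}=f_{j}$, $e_{i}e_{k}=\delta_{ik}e_{i}$, $f_{l}f_{j}=\delta_{lj}f_{j}$, cyclicity of the trace, and the orthogonality hypothesis $tr_{n}(e_{i}f_{j})=tr_{n}(e_{i})tr_{n}(f_{j})=1/n^{2}$, one gets
\begin{equation*}
tr_{n}\bigl((e_{i}f_{j})^{*}(e_{k}f_{l})\bigr)=tr_{n}(f_{j}e_{i}e_{k}f_{l})=\delta_{ik}\,tr_{n}(f_{l}f_{j}e_{i})=\delta_{ik}\delta_{jl}\,tr_{n}(e_{i}f_{j})=\frac{\delta_{ik}\delta_{jl}}{n^{2}}.
\end{equation*}
Thus the Gram matrix of $\{e_{i}f_{j}\}_{i,j}$ is a nonzero scalar multiple of the identity, so the family is linearly independent.

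Since $\dim_{\mathbb{C}}\mathcal{M}_{n}(\mathbb{C})=n^{2}$ and we have $n^{2}$ linearly independent elements of $A\cdot B$, the inclusion $A\cdot B\subseteq \mathcal{M}_{n}(\mathbb{C})$ must be an equality. There is no real obstacle: the only point to be careful about is to observe that a masa in $\mathcal{M}_{n}(\mathbb{C})$ is automatically $n$-dimensional (so that the count of minimal projections matches the ambient dimension), after which the orthogonality hypothesis does all the work by diagonalizing the Gram matrix.
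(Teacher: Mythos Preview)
Your proof is correct; the Gram matrix computation is clean and the dimension count closes the argument. The paper itself does not prove this lemma at all --- it merely records it as ``very well known'' --- so there is nothing to compare against, but your argument is exactly the standard one.
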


\subsection{Tauer Masa of Product Class}
$ $\\\\
\indent Following Sinclair and White \cite{MR2302742}, we shall
calculate the \emph{measure-multiplicity invariant} of a Tauer
masa $A$, whose description is elaborated below. The
$\Gamma $ invariant of this Tauer masa is $0$ (A is totally
non-$\Gamma$ \cite{MR2302742}). We will show that its \emph{left-right-measure}
belongs to the product class. This example is important, as, it is
an example of a masa in $\mathcal{R}$ with simple multiplicity whose
\emph{left-right-measure} is the class of product measure. Such masas
are rare in $\mathcal{R}$. We do not know whether it arises from a
dynamical system.\\
\indent Let $k_{1}=2$, and for each $r\geq 2$, let $k_{r}$ be a prime
exceeding $k_{1}k_{2}\cdots k_{r-1}$. Set $\mathcal{M}_{r}$ to be
the algebra of $k_{r} \times k_{r}$ matrices. By Thm. 3.2
\cite{MR703810}, there is a family
$\{{}^{(r)}D^{\underline{t}(r-1)}\}_{\underline{t}(r-1)}$ of
pairwise orthogonal masas in $\mathcal{M}_{r}$. Let
$\mathcal{N}_{n}= \overset
n{\underset{r=1}\bigotimes}\mathcal{M}_{r}$. There is a natural
inclusion $x \mapsto x\otimes 1$ of $\mathcal{N}_{n}$ inside
$\mathcal{N}_{n+1}$ and one works in the hyperfinite $\rm{II}_{1}$
factor $\mathcal{R}$, obtained as a direct limit of these
$\mathcal{N}_{n}$ with respect to the normalized trace. With respect
to the chain $\{\mathcal{N}_{n}\}_{n=1}^{\infty}$ of finite type
$\rm{I}$ subfactors of
$\mathcal{R}$, the masa $A$ is constructed as follows.\\
\indent Let $A_{1}=D_{2}(\mathbb{C})\subset \mathcal{M}_{1}$ be the diagonal
masa. Having constructed $A_{n}$, one constructs $A_{n+1}$ as,
\begin{equation}
A_{n+1}= \underset{\underline{t}(n)}\bigoplus
{}^{(n)}f_{\underline{t}(n)} \otimes
{}^{(n+1)}D^{{\underline{t}(n)}}.
\end{equation}
That $(\cup_{n=1}^{\infty}A_{n})^{\prime\prime}$ is a masa in
$\mathcal{R}$, follows from a theorem of Tauer $($see Thm. 2.5 \cite{MR0182892}$)$.
This
Tauer masa is singular from Prop. 2.1 \cite{MR2302742}.\\
\indent We denote by $P_{\underline{t}(n),\underline{s}(n)}^{(n)}$
the orthogonal projection from $L^{2}(\mathcal{R})$ onto the
subspace
${}^{(n)}f_{\underline{t}(n)}L^{2}(\mathcal{R}){}^{(n)}f_{\underline{s}(n)}$,
and let,
\begin{equation}\label{proj1}
P=\sum _{n=1}^{\infty}\underset{t_{1}=s_{1},\cdots, t_{n-1}=s_{n-1},t_{n}\neq s_{n}}{\underset{\underline{t}(n),\text{ }\underline{s}(n):}\sum}P_{(t_{1},\cdots,t_{n}),(s_{1},\cdots,s_{n})}^{(n)}.
\end{equation}
\indent Clearly,
$P_{\underline{t}(n),\underline{s}(n)}^{(n)}={}^{(n)}f_{\underline{t}(n)}J{}^{(n)}f_{\underline{s}(n)}J$
and is in $\mathcal{A}$. At the first sight, it might not be clear
that the sum in Eq. \eqref{proj1} makes sense, but, the
projections involved in the sum are orthogonal and sums to
$1-e_{A}$. Indeed, since $e_{A}$ is the limit in strong operator topology of $e_{A_{n}^{\prime}\cap \mathcal{R}}=\sum_{\underline{t}(n)}P^{(n)}_{(t_{1},t_{2},\cdots, t_{n}),(t_{1},t_{2},\cdots, t_{n})}$ $($\S 5.3 \cite{MR1336825}, Lemma 1.2 \cite{MR641131}$)$, that $P=1-e_{A}$ follows by rearranging terms in Eq. \eqref{proj1}.\\
\indent The following lemma, part of which was recorded by Sinclair
and White \cite{MR2302742}, will be crucial for our calculations.

\begin{lem}\label{lemma:A}
For each $n$ $\in$ $\mathbb {N}$, let $\mathcal{R}=
{\mathcal{N}}_{n}\bigotimes {\mathcal{R}}_{n}$, where
$\mathcal{R}_{n} =( \overset{\infty}{\underset{r=n+1}\bigotimes}
\mathcal{M}_{k_{r}}(\mathbb{C}))^{\prime\prime}$. Then
\begin{equation}\label{decompose}
A=\underset{\underline{t}(n)}\bigoplus {}^{(n)}f_{\underline{t}(n)}
\otimes A_{\infty , n+1}^{\underline{t}(n)}, \text{ where}
\end{equation}
$A_{\infty , n+1}^{\underline{t}(n)}$ are Tauer masas in
${\mathcal{R}}_{n}$ and whenever $\underline{t}(n)\neq
\underline{s}(n)$
we have \\
$(i)$ $A_{\infty , n+1}^{\underline{t}(n)}$ and $A_{\infty ,
n+1}^{\underline{s}(n)}$ are orthogonal in
${\mathcal{R}}_{n}$,\\
$(ii)$ $(A_{\infty , n+1}^{\underline{t}(n)}\cdot A_{\infty ,
n+1}^{\underline{s}(n)})$$^{- \parallel . \parallel
_{2}}=L^{2}(\mathcal{R}_{n}).$ \\
Moreover, for  each  $\underline{t}(n)$ if $\{A_{m ,
n+1}^{\underline{t}(n)}\}_{m=1}^{\infty}$ denote the $m$-th
approximation of $A_{\infty ,
n+1}^{\underline{t}(n)}$ in $\mathcal{R}_{n}$, then\\
\begin{equation}\label{equation:orth}
A_{1 , n+1}^{\underline{t}(n)} ={}^{(n+1)}D^{\underline{t}(n)},\text{
 and, }
\end{equation}
\begin{equation}
A_{m+1 , n+1}^{\underline{t}(n)}=\bigoplus_{ e \in \mathcal{P}(A_{m
, n+1}^{\underline{t}(n)})} e \otimes
^{(m+1)}D_{e,n+1}^{\underline{t}(n)},
\end{equation}
where for each fixed $m$
and $\underline{t}(n)$, the family
$\{^{(m+1)}D_{e,n+1}^{\underline{t}(n)}\}_{e}$ are pairwise
orthogonal masas in $\mathcal{M}_{k_{n+m+1}}(\mathbb{C})$.
\end{lem}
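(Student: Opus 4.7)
The plan is to establish the displayed decomposition at each finite stage $A_{n+m}$ by induction on $m$, pass to the SOT-closure to define $A_{\infty,n+1}^{\underline{t}(n)}$, and then reduce the orthogonality and span claims to their finite-dimensional analogues at each approximating stage. First I would prove by induction on $m \geq 1$ that
\[
A_{n+m} = \bigoplus_{\underline{t}(n)} {}^{(n)}f_{\underline{t}(n)} \otimes A_{m,n+1}^{\underline{t}(n)},
\]
where $A_{m,n+1}^{\underline{t}(n)}$ is a masa in $\bigotimes_{r=n+1}^{n+m}\mathcal{M}_{k_r}$ satisfying the stated recursion. The base case $m=1$ is just the definition of $A_{n+1}$. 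For the inductive step I would expand $A_{n+m+1}$ using its defining formula, identify each index $\underline{s}(n+m)$ with a pair $(\underline{t}(n), e)$ with $e \in \mathcal{P}(A_{m,n+1}^{\underline{t}(n)})$ via the inductive hypothesis, and regroup the direct sum by $\underline{t}(n)$. This forces the identification ${}^{(m+1)}D_{e,n+1}^{\underline{t}(n)} := {}^{(n+m+1)}D^{(\underline{t}(n),e)}$, and pairwise orthogonality within each family $\{{}^{(m+1)}D_{e,n+1}^{\underline{t}(n)}\}_{e}$ (for fixed $\underline{t}(n)$) is inherited from pairwise orthogonality of the ambient family $\{{}^{(n+m+1)}D^{\underline{u}(n+m)}\}_{\underline{u}(n+m)}$. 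Taking the SOT-closure in $m$ produces the stated decomposition of $A$; each $A_{\infty,n+1}^{\underline{t}(n)}$ is then a Tauer masa in $\mathcal{R}_n$, the maximality following from the observation that any $x \in \mathcal{R}_n$ commuting with $A_{\infty,n+1}^{\underline{t}(n)}$ lifts to ${}^{(n)}f_{\underline{t}(n)} \otimes x \in A' \cap \mathcal{R} = A$.

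For (i), I would prove by induction on $m$ that $A_{m,n+1}^{\underline{t}(n)}$ and $A_{m,n+1}^{\underline{s}(n)}$ are orthogonal with respect to the normalized trace on $\bigotimes_{r=n+1}^{n+m}\mathcal{M}_{k_r}$ whenever $\underline{t}(n) \neq \underline{s}(n)$. The base case $m=1$ is immediate, since ${}^{(n+1)}D^{\underline{t}(n)}$ and ${}^{(n+1)}D^{\underline{s}(n)}$ are distinct members of the orthogonal family in $\mathcal{M}_{k_{n+1}}$. For the inductive step, an element of $A_{m+1,n+1}^{\underline{t}(n)}$ has the form $a = \sum_{e} e \otimes d_e$ with $e \in \mathcal{P}(A_{m,n+1}^{\underline{t}(n)})$ and $d_e \in {}^{(m+1)}D_{e,n+1}^{\underline{t}(n)}$, and similarly for $b = \sum_{e'} e' \otimes d'_{e'} \in A_{m+1,n+1}^{\underline{s}(n)}$. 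The tensor-product trace factors across the two slots: the inductive hypothesis gives $\tau(ee') = \tau(e)\tau(e')$, while the orthogonality of ${}^{(m+1)}D_{e,n+1}^{\underline{t}(n)}$ and ${}^{(m+1)}D_{e',n+1}^{\underline{s}(n)}$ in $\mathcal{M}_{k_{n+m+1}}$ (holding because their attaching multi-indices differ already in the first $n$ coordinates) yields $\tau(d_e d'_{e'}) = \tau(d_e)\tau(d'_{e'})$. Summing over $e, e'$ produces $\tau(ab) = \tau(a)\tau(b)$, and normality of the trace propagates the statement to the SOT-closures.

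Finally, (ii) follows immediately from (i) together with Lemma \ref{lemma:orth}: at each stage $m$, $A_{m,n+1}^{\underline{t}(n)}$ and $A_{m,n+1}^{\underline{s}(n)}$ are trace-orthogonal masas in the finite type $\rm{I}$ factor $\bigotimes_{r=n+1}^{n+m}\mathcal{M}_{k_r}$, so their linear span equals this entire factor; taking the union over $m$ exhibits a $\|\cdot\|_2$-dense subspace of $L^2(\mathcal{R}_n)$ sitting inside $A_{\infty,n+1}^{\underline{t}(n)}\cdot A_{\infty,n+1}^{\underline{s}(n)}$. The principal bookkeeping obstacle throughout is keeping the two labellings of the minimal projections of $A_{n+m}$ --- via $\underline{s}(n+m)$ on the one hand and via pairs $(\underline{t}(n), e)$ on the other --- mutually consistent, so that the cross-family orthogonality of the ambient masas ${}^{(n+m+1)}D^{\underline{u}(n+m)}$ at each level transfers cleanly into orthogonality of the finite-dimensional approximations of distinct $A_{\infty,n+1}^{\underline{t}(n)}$'s.
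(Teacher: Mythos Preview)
Your argument is correct and, for part~(ii), follows the paper's approach exactly: both deduce the span statement from orthogonality of the finite-stage approximations $A_{m,n+1}^{\underline{t}(n)}$ and $A_{m,n+1}^{\underline{s}(n)}$ via Lemma~\ref{lemma:orth}, then pass to the $\|\cdot\|_2$-closure. For the decomposition, the recursive description, and part~(i), the paper simply defers to Lemma~5.6 of Sinclair--White \cite{MR2302742}, whereas you have written out the inductive bookkeeping that reference contains; your treatment is therefore more self-contained but not a genuinely different route.
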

\begin{proof}
It should be understood that in $(ii)$ of the statement, the closure is taken with respect to the faithful normal tracial state of $\mathcal{R}_{n}$. We only have to prove $(ii)$. The rest of the statements are just
rephrasing of Lemma 5.6 of \cite{MR2302742}.\\
\indent Use Lemma \ref{lemma:orth}, $(i)$ and Eq. \eqref{equation:orth} to
conclude that $$\mathcal{M}_{k_{n+1}}\subseteq (A_{\infty ,
n+1}^{\underline{t}(n)}\cdot A_{\infty ,
n+1}^{\underline{s}(n)})^{-\norm{\cdot}_{2}}.$$ Since $A_{\infty ,
n+1}^{\underline{t}(n)}$ and $A_{\infty , n+1}^{\underline{s}(n)}$
are orthogonal, so is $A_{m , n+1}^{\underline{t}(n)}$ and $A_{m ,
n+1}^{\underline{s}(n)}$ for all $m\geq n+1$. Use Lemma
\ref{lemma:orth} to conclude that
$\overset{m}{\underset{r=n+1}\bigotimes}\mathcal{M}_{k_{r}}(\C)\subseteq
(A_{\infty , n+1}^{\underline{t}(n)}\cdot A_{\infty ,
n+1}^{\underline{s}(n)})^{-\norm{\cdot}_{2}}$ for all $m\geq n+1$.
Now use density of algebraic tensor product of matrix algebras
in $L^{2}(\mathcal{R}_{n})$ to finish the proof.
\end{proof}

\indent For each $n$, let $X_{n}=\{
x_{{1}}^{(n)},x_{{2}}^{(n)},\cdots,x_{k_{n}}^{(n)}\}$ denote a set
of $k_{n}$ points. Let $Y^{(n)}=\overset n{\underset{k=1}\prod}
X_{k}$, $X^{(n)}=\overset {\infty}{\underset{k=n+1}\prod}X_{k}$ and
$X=\overset {\infty}{\underset{k=1}\prod}X_{k}$, so that for each
$n$, $X=Y^{(n)} \times X^{(n)}$. Therefore,
$X=\underset{{\infty\longleftarrow n}}{\lim} Y^{(n)}$ and $C(X)$ is
norm separable and \emph{w.o.t} dense in $A$. The identification is
a standard one and we omit the details. Write $B=C(X)$. Therefore,
\begin{equation}
B=\underset{\underline{t}(n)}\bigoplus{}^{(n)}f_{\underline{t}(n)}\otimes
B^{\underline{t}(n)}_{\infty,n+1},
\end{equation}
$B^{\underline{t}(n)}_{\infty,n+1}\cong C(X^{(n+1)})$ and is a
\emph{w.o.t} dense, norm separable $C^{*}$ subalgebra of $A_{\infty
, n+1}^{\underline{t}(n)}$.

\begin{lem}\label{lemma:prodm}
For each $n$ and
$\underline{t}(n) \neq \underline{s}(n)$,\\
$(i)(A{}^{(n)}f_{\underline{t}(n),\underline{s}(n)}A)^{-\norm{.}_{2}}={}^{(n)}f_{\underline{t}(n)}L^{2}(\mathcal{R}){}^{(n)}f_{\underline{s}(n)}$,\\
$(ii)$ for $a$, $b$ $\in$ $B$,
\begin{align}
\nonumber &\langle a{}^{(n)}f_{\underline{t}(n),\text{
}\underline{s}(n)}b,{}^{(n)}f_{\underline{t}(n),\text{
}\underline{s}(n)}\rangle_{\tau_{ \mathcal{R}}}=& k_{1}k_{2}\cdots
k_{n}\int_{X}\int_{X}{}^{(n)}f_{\underline{t}(n)}(t){}^{(n)}f_{\underline{s}(n)}(s)a(t)b(s)d(\tau_{
\mathcal{R}}\otimes\tau_{\mathcal{R}})(t,s). \nonumber
\end{align}
Moreover,
$(A{}^{(n)}f_{\underline{t}(n),\underline{s}(n)}A)^{-\norm{.}_{2}}$
is orthogonal to
$(A{}^{(n)}f_{\underline{t}^{\prime}(n),\underline{s}^{\prime}(n)}A)^{-\norm{.}_{2}}$
whenever $\underline{t}(n)\neq \underline{s}(n)$,
$\underline{t}^{\prime}(n)\neq \underline{s}^{\prime}(n)$ and
$(\underline{t}(n),\underline{s}(n))\neq
(\underline{t}^{\prime}(n),\underline{s}^{\prime}(n))$.
\end{lem}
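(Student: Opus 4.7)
The plan is to reduce both parts to the structural Lemma \ref{lemma:A} and exploit the tensor splitting $\mathcal{R} = \mathcal{N}_n \otimes \mathcal{R}_n$ together with the decomposition $A = \bigoplus_{\underline{u}(n)} {}^{(n)}f_{\underline{u}(n)} \otimes A^{\underline{u}(n)}_{\infty,n+1}$. For part $(i)$, the main observation is that for $a \in A^{\underline{t}(n)}_{\infty,n+1}$ and $b \in A^{\underline{s}(n)}_{\infty,n+1}$, both $a$ and $b$ live in $1 \otimes \mathcal{R}_n$ and thus commute with the matrix units ${}^{(n)}f_{\cdot,\cdot} \in \mathcal{N}_n$, giving
\[
({}^{(n)}f_{\underline{t}(n)} \otimes a)\, {}^{(n)}f_{\underline{t}(n),\underline{s}(n)}\, ({}^{(n)}f_{\underline{s}(n)} \otimes b) = {}^{(n)}f_{\underline{t}(n),\underline{s}(n)} \otimes ab.
\]
Summing over the direct summands of $A$ shows that $A \cdot {}^{(n)}f_{\underline{t}(n),\underline{s}(n)} \cdot A$ is the span of elements ${}^{(n)}f_{\underline{t}(n),\underline{s}(n)} \otimes ab$ with $a \in A^{\underline{t}(n)}_{\infty,n+1}$, $b \in A^{\underline{s}(n)}_{\infty,n+1}$. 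Closing in $\norm{\cdot}_2$ and applying Lemma \ref{lemma:A}$(ii)$ identifies this with ${}^{(n)}f_{\underline{t}(n),\underline{s}(n)} \otimes L^{2}(\mathcal{R}_n)$, which is precisely ${}^{(n)}f_{\underline{t}(n)} L^2(\mathcal{R}) {}^{(n)}f_{\underline{s}(n)}$ under the tensor identification.

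For $(ii)$, I would write $a = \sum_{\underline{u}(n)} {}^{(n)}f_{\underline{u}(n)} \otimes a_{\underline{u}(n)}$ with $a_{\underline{u}(n)} \in B^{\underline{u}(n)}_{\infty,n+1}$, and similarly $b = \sum_{\underline{v}(n)} {}^{(n)}f_{\underline{v}(n)} \otimes b_{\underline{v}(n)}$. The identity above collapses the double sum to $a \cdot {}^{(n)}f_{\underline{t}(n),\underline{s}(n)} \cdot b = {}^{(n)}f_{\underline{t}(n),\underline{s}(n)} \otimes a_{\underline{t}(n)} b_{\underline{s}(n)}$, so the inner product on the left becomes $\tau_{\mathcal{R}}({}^{(n)}f_{\underline{t}(n)} \otimes a_{\underline{t}(n)} b_{\underline{s}(n)}) = \tfrac{1}{k_{1}\cdots k_{n}}\tau_{\mathcal{R}_n}(a_{\underline{t}(n)} b_{\underline{s}(n)})$. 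The key analytic step is to invoke the orthogonality of $A^{\underline{t}(n)}_{\infty,n+1}$ and $A^{\underline{s}(n)}_{\infty,n+1}$ in $\mathcal{R}_n$ from Lemma \ref{lemma:A}$(i)$ to factorise this as $\tfrac{1}{k_1 \cdots k_n}\tau_{\mathcal{R}_n}(a_{\underline{t}(n)})\tau_{\mathcal{R}_n}(b_{\underline{s}(n)})$. On the right, the identification $B \cong C(X)$ renders ${}^{(n)}f_{\underline{t}(n)}$ the indicator of the cylinder $\{\underline{t}(n)\} \times X^{(n)}$, so $\int_X {}^{(n)}f_{\underline{t}(n)}\, a\, d\tau_{\mathcal{R}} = \tfrac{1}{k_1\cdots k_n}\tau_{\mathcal{R}_n}(a_{\underline{t}(n)})$ and similarly for $b$. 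Fubini evaluates the double integral as $\tfrac{1}{(k_1\cdots k_n)^2}\tau_{\mathcal{R}_n}(a_{\underline{t}(n)})\tau_{\mathcal{R}_n}(b_{\underline{s}(n)})$, so the prefactor $k_{1}\cdots k_{n}$ on the right matches the left-hand side exactly.

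The orthogonality assertion is then essentially automatic from the orthogonality of distinct matrix units: if $(\underline{t}(n),\underline{s}(n)) \neq (\underline{t}'(n),\underline{s}'(n))$ then either ${}^{(n)}f_{\underline{s}(n)} {}^{(n)}f_{\underline{s}'(n)} = 0$ or ${}^{(n)}f_{\underline{t}'(n)} {}^{(n)}f_{\underline{t}(n)} = 0$, and in either case cycling the trace annihilates $\tau(\xi \eta^*)$ for $\xi \in {}^{(n)}f_{\underline{t}(n)} L^2(\mathcal{R}) {}^{(n)}f_{\underline{s}(n)}$ and $\eta$ in the other subspace. The main obstacle is pure bookkeeping: tracking the normalisation constant $k_1 \cdots k_n$ correctly and verifying that the identifications $B \cong C(X)$ and $\mathcal{R} = \mathcal{N}_n \otimes \mathcal{R}_n$ are used compatibly. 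The genuine content is the factorisation of the trace in $(ii)$, which is delivered directly by the orthogonality of Tauer masas established in Lemma \ref{lemma:A}$(i)$.
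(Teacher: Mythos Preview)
Your proposal is correct and follows essentially the same route as the paper: decompose $a,b$ via Eq.~\eqref{decompose}, collapse the product to ${}^{(n)}f_{\underline{t}(n),\underline{s}(n)}\otimes a_{\underline{t}(n)}b_{\underline{s}(n)}$, invoke Lemma~\ref{lemma:A}$(ii)$ for density in part $(i)$, and Lemma~\ref{lemma:A}$(i)$ for the trace factorisation in part $(ii)$. The only cosmetic difference is that the paper phrases the final orthogonality via the mutually orthogonal projections ${}^{(n)}f_{\underline{t}(n)}J{}^{(n)}f_{\underline{s}(n)}J\in\A$, whereas you cycle the trace directly; these are equivalent.
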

\begin{proof}
For $a,b\in A$, using Eq. \eqref{decompose} write
\begin{align}
\nonumber &a=\underset{\underline{q}(n)}\oplus
{}^{(n)}f_{\underline{q}(n)}\otimes a_{\underline{q}(n)} \text{ and
} b=\underset{\underline{p}(n)}\oplus
{}^{(n)}f_{\underline{p}(n)}\otimes b_{\underline{p}(n)} \nonumber
\end{align}
for $a_{\underline{q}(n)}\in A_{\infty , n+1}^{\underline{q}(n)}$,
and  $b_{\underline{p}(n)}\in A_{\infty , n+1}^{\underline{p}(n)}$.
By direct multiplication, we get
\begin{align}
\nonumber &a({}^{(n)}f_{\underline{t}(n),\text{
}\underline{s}(n)}\otimes 1_{\mathcal{R}_{n}})b
={}^{(n)}f_{\underline{t}(n),\text{ }\underline{s}(n)}\otimes
a_{\underline{t}(n)}b_{\underline{s}(n)}. \nonumber
\end{align}
\noindent Therefore $(i)$ follows from $(ii)$ of Lemma \ref{lemma:A}.
Moreover, for $a,b\in B$,
\begin{align}
\nonumber&\langle a({}^{(n)}f_{\underline{t}(n),\text{ }\underline{s}(n)}\otimes 1_{\mathcal{R}_{n}})b,{}^{(n)}f_{\underline{t}(n),\text{ }\underline{s}(n)}\otimes 1_{\mathcal{R}_{n}}\rangle_{\tau_{\mathcal{R}}}\\
\nonumber&=tr_{\prod _{i=1}^{n}k_{i}}({}^{(n)}f_{\underline{t}(n)})\tau_{\mathcal{R}_{n}}(a_{\underline{t}(n)}b_{\underline{s}(n)})\\
\nonumber&=\frac{1}{k_{1}k_{2}\cdots k_{n}}\tau_{\mathcal{R}_{n}}(a_{\underline{t}(n)}b_{\underline{s}(n)})\\
\nonumber&=k_{1}k_{2}\cdots k_{n}\tau_{\mathcal{R}}(a({}^{(n)}f_{\underline{t}(n)}\otimes1))\tau_{\mathcal{R}}(b({}^{(n)}f_{\underline{s}(n)}\otimes 1))\text{  (by orthogonality, Lemma \ref{lemma:A}} \text{ }(ii))\\
\nonumber&=k_{1}k_{2}\cdots k_{n}\int_{X}\int_{X}a(t)({}^{(n)}f_{\underline{t}(n)}\otimes 1)(t)b(s)({}^{(n)}f_{\underline{s}(n)}\otimes 1)(s)d(\tau_{\mathcal{R}}\otimes \tau_{\mathcal{R}})(t,s)\\
\nonumber&=k_{1}k_{2}\cdots k_{n}\int_{x_{t_{1}}^{(1)}\times\cdots\times x_{t_{n}}^{(n) }\times X^{(n)}}\int_{x_{s_{1}}^{(1)}\times\cdots\times x_{s_{n}}^{(n) }\times X^{(n)}}a_{\underline{t}(n)}(t)b_{\underline{s}(n)}(s)d(\tau_{\mathcal{R}}\otimes \tau_{\mathcal{R}})(t,s)\\
\nonumber &=k_{1}k_{2}\cdots k_{n}\int_{X\times
X}{}^{(n)}f_{\underline{t}(n)}(t){}^{(n)}f_{\underline{s}(n)}(s)a(t)b(s)d(\tau_{
\mathcal{R}}\otimes\tau_{\mathcal{R}})(t,s),
\end{align}
where the indicators of $(x_{t_{1}}^{(1)}\times\cdots\times
x_{t_{n}}^{(n)})\times X^{(n)}$ and $(x_{s_{1}}^{(1)}\times\cdots\times
x_{s_{n}}^{(n)})\times X^{(n)}$ corresponds to ${}^{(n)}f_{\underline{t}(n)}$ and
${}^{(n)}f_{\underline{s}(n)}$ respectively. This proves $(ii)$.
Clearly the final statement follows from $(i)$ and the fact that
${}^{(n)}f_{\underline{t}(n)}J{}^{(n)}f_{\underline{s}(n)}J$ and
${}^{(n)}f_{\underline{t}^{\prime}(n)}J{}^{(n)}f_{\underline{s}^{\prime}(n)}J$
are orthogonal projections in $L^{2}(\mathcal{R})$ if
$(\underline{t}(n),\underline{s}(n))\neq
(\underline{t}^{\prime}(n),\underline{s}^{\prime}(n))$.
\end{proof}

\begin{rem}\label{lemma:finalprod}
The following observation will be used in the next proof. On every occasion below, where we add direct integrals, Lemma 5.7 \cite{MR2261688} is invoked. For $t_{i}=s_{i}$, $1\leq i\leq n-1$ and $t_{n}\neq s_{n}$, the projection
$P_{(t_{1},\cdots,t_{n}),(s_{1},\cdots,s_{n})}^{(n)}\in \A^{\prime}$ and hence is
in $\A$, as $\A$ is maximal abelian in
$\mathbf{B}(L^{2}(\mathcal{R}))$. Therefore,
$P_{(t_{1},\cdots,t_{n}),(s_{1},\cdots,s_{n})}^{(n)}$ is decomposable
$($see Ch. 14 \cite{MR1468230}$)$. Denote
\begin{align}
\nonumber &E_{(\cdot,t_{n}),(\cdot,s_{n})}
=(x_{t_{1}}^{(1)}\times\cdots\times x_{t_{n-1}}^{(n-1)}\times
x_{t_{n}}^{(n) }\times X^{(n)})\times
(x_{t_{1}}^{(1)}\times\cdots\times x_{t_{n-1}}^{(n-1)}\times
x_{s_{n}}^{(n) }\times X^{(n)}).
\end{align}
>From Lemma \ref{lemma:prodm}, it follows that the range $P_{(t_{1},\cdots,t_{n}),(s_{1},\cdots,s_{n})}^{(n)}(L^{2}(\mathcal{R}))$ is the direct integral of complex numbers
over the set $E_{(\cdot,t_{n}),(\cdot,s_{n})}$ with respect to $\tau_{\mathcal{R}}\otimes \tau_{\mathcal{R}}$,
and, $\A P_{(t_{1},\cdots,t_{n}),(s_{1},\cdots,s_{n})}^{(n)}$ is the diagonalizable
algebra with respect to this decomposition. For
$\underline{t}(n-1)\neq \underline{t}^{\prime}(n-1)$, the direct
integrals of
$P_{(\underline{t}(n-1),t_{n}),(\underline{t}(n-1),s_{n})}^{(n)}$
and
$P_{(\underline{t}^{\prime}(n-1),t_{n}^{\prime}),(\underline{t}^{\prime}(n-1),s_{n}^{\prime})}^{(n)}$
with $t_{n}\neq s_{n}$ and $t_{n}^{\prime}\neq s_{n}^{\prime}$ rest
over disjoint subsets of $X\times X$. Therefore, the range of $P^{(n)}=\underset{t_{1}=s_{1},\cdots, t_{n-1}=s_{n-1},t_{n}\neq s_{n}}{\underset{\underline{t}(n),\text{ }\underline{s}(n):}\sum}P_{(t_{1},\cdots,t_{n}),(s_{1},\cdots,s_{n})}^{(n)}
$ is the direct integral of complex numbers with respect to $\tau_{\mathcal{R}}\otimes \tau_{\mathcal{R}}$ over the set
$E_{n}=\cup_{t_{1}=1}^{k_{1}}\cdots
\cup_{t_{n-1}=1}^{k_{n-1}}\cup_{t_{n}\neq
s_{n}=1}^{k_{n}}E_{(\cdot,t_{n}),(\cdot,s_{n})}$,
and associated statements about diagonalizability of $\A P^{(n)}$ hold. It is important to note that $E_{n}\cap E_{m}=\emptyset$ for all $n\neq m$.
\end{rem}

\indent Let $c_{n}=\overset{n}{\underset{r=1}\prod}k_{r}$ for
$n\geq 1$ and $c_{0}=1$.
\begin{prop}\label{mmobtained}
The vector
$\overset{\infty}{\underset
{n=1}\sum}\text{ }\underset{{\underline{t}(n)}}\sum\frac{1}{\sqrt{c_{n}}}{}^{(n)}f_{(\cdot,t_{n}),(\cdot,s_{n})}$
is a cyclic vector of $\A(1-e_{A})$ and
\begin{align}
\nonumber(1-e_{A})(L^{2}(\mathcal{R}))\cong \int_{X\times X}^
{\oplus}\C_{t,s}d(\tau_{\mathcal{R}}\otimes
\tau_{\mathcal{R}})(t,s), \text{ where }\C_{t,s}=\C.
\end{align}
Moreover, $\A(1-e_{A})$ is the algebra of diagonalizable operators
with respect to this decomposition.
\end{prop}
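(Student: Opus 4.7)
The plan is to first justify that the stated vector $\zeta:=\sum_{n=1}^{\infty}\zeta_{n}$, where $\zeta_{n}=\sum_{\underline{t}(n),s_{n}\neq t_{n}}\frac{1}{\sqrt{c_{n}}}{}^{(n)}f_{(\cdot,t_{n}),(\cdot,s_{n})}$, lies in $L^{2}(\mathcal{R})$, then prove cyclicity for $\A(1-e_{A})$, and finally patch the direct integral description of Remark \ref{lemma:finalprod} to obtain the global decomposition.

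For integrability, note that $\norm{{}^{(n)}f_{(\cdot,t_{n}),(\cdot,s_{n})}}_{2}^{2}=\tau_{\mathcal{R}}({}^{(n)}f_{\underline{s}(n)})=\tfrac{1}{c_{n}}$, and for a fixed $n$ there are $c_{n}(k_{n}-1)$ off-diagonal matrix units of this form. Since the vectors $\zeta_{n}$ are mutually orthogonal (their supports correspond to the disjoint central projections $P^{(n)}$ of Remark \ref{lemma:finalprod}), a telescoping computation gives
\begin{align}
\nonumber \norm{\zeta}_{2}^{2}=\sum_{n=1}^{\infty}\frac{c_{n}(k_{n}-1)}{c_{n}^{2}}=\sum_{n=1}^{\infty}\left(\frac{1}{c_{n-1}}-\frac{1}{c_{n}}\right)=1.
\end{align}

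For cyclicity, I would use the fact that the matrix-unit projections ${}^{(n)}f_{\underline{t}(n)},{}^{(n)}f_{\underline{s}(n)}\in A$ can act on $\zeta$ from the left and right. For any admissible pair $(\underline{t}(n),\underline{s}(n))$ with $t_{i}=s_{i}$ for $i\leq n-1$ and $t_{n}\neq s_{n}$, the element ${}^{(n)}f_{\underline{t}(n)}\zeta{}^{(n)}f_{\underline{s}(n)}$ equals $\frac{1}{\sqrt{c_{n}}}{}^{(n)}f_{(\cdot,t_{n}),(\cdot,s_{n})}$, since every other summand of $\zeta$ is annihilated by at least one of these two projections (this uses $E_{n}\cap E_{m}=\emptyset$ from Remark \ref{lemma:finalprod}, or equivalently the orthogonality of distinct matrix units in the chain). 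Hence each individual matrix unit lies in $A\zeta A$, and by Lemma \ref{lemma:prodm}$(i)$,
\begin{align}
\nonumber \overline{A\zeta A}^{\norm{\cdot}_{2}}\supseteq\overline{A\,{}^{(n)}f_{(\cdot,t_{n}),(\cdot,s_{n})}A}^{\norm{\cdot}_{2}}=\mathrm{Range}\bigl(P^{(n)}_{(\cdot,t_{n}),(\cdot,s_{n})}\bigr)
\end{align}
for every admissible tuple. Summing these ranges and invoking Eq. \eqref{proj1} together with the identification $P=1-e_{A}$ recalled just after it, we conclude that $\overline{A\zeta A}^{\norm{\cdot}_{2}}=(1-e_{A})L^{2}(\mathcal{R})$. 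Since $\A=(A\cup JAJ)^{\prime\prime}$, this is the same as saying $\zeta$ is cyclic for $\A(1-e_{A})$.

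For the direct integral description, I would assemble the pieces from Remark \ref{lemma:finalprod}: each $\mathrm{Range}(P^{(n)})$ is isomorphic to $\int^{\oplus}_{E_{n}}\C_{t,s}\,d(\tau_{\mathcal{R}}\otimes\tau_{\mathcal{R}})$ with $\A P^{(n)}$ acting as the diagonalizable algebra, and the sets $E_{n}$ are pairwise disjoint. Because $t\neq s$ in $X$ has a well-defined first coordinate of disagreement, $\bigsqcup_{n\geq 1}E_{n}$ exhausts $(X\times X)\setminus \Delta(X)$ up to $\tau_{\mathcal{R}}\otimes\tau_{\mathcal{R}}$ null sets. Taking the orthogonal direct sum of the fibrewise decompositions and using that $\A(1-e_{A})$ is abelian and contains all $\A P^{(n)}$ yields the claimed isomorphism
\begin{align}
\nonumber (1-e_{A})L^{2}(\mathcal{R})\cong\int^{\oplus}_{X\times X}\C_{t,s}\,d(\tau_{\mathcal{R}}\otimes\tau_{\mathcal{R}})(t,s)
\end{align}
with $\A(1-e_{A})$ acting as the diagonal algebra; maximality as an abelian subalgebra of $\mathbf{B}((1-e_{A})L^{2}(\mathcal{R}))$ forces the whole of $\A(1-e_{A})$, not merely its restriction, to coincide with the diagonalizable operators. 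The step I expect to require the most care is the cyclicity extraction: ensuring that the action of the diagonal projections ${}^{(n)}f_{\underline{t}(n)}$ from the two sides genuinely isolates a single matrix unit of $\zeta$, which is precisely why the disjointness of the supports $E_{n}$ (rather than just their orthogonality as projections) is essential.
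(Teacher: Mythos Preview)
Your proof is correct and follows essentially the same route as the paper's. The only organizational difference is that you isolate the cyclicity argument explicitly (via the two-sided projection ${}^{(n)}f_{\underline{t}(n)}\zeta\,{}^{(n)}f_{\underline{s}(n)}$) and then identify the limiting measure by the direct observation that every off-diagonal point of $X\times X$ has a first coordinate of disagreement, whereas the paper first builds the partial measures $\eta^{(n)}$, sums them via Lemma 5.7 of \cite{MR2261688}, and then verifies $\eta=\tau_{\mathcal{R}}\otimes\tau_{\mathcal{R}}$ by a dominated convergence argument on the increasing indicators; both arguments amount to the same disjoint-support bookkeeping.
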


\begin{proof}
Fix $n\in \mathbb{N}$. For each $1 \leq t_{i} \leq k_{i}$, $1 \leq i
\leq n-1$, and $1 \leq t_{n}\neq s_{n}\leq k_{n}$, working with
vectors
$\frac{1}{\sqrt{c_{n}}}{}^{(n)}f_{(\cdot,t_{n}),(\cdot,s_{n})}$, one
finds $($using Lemma \ref{lemma:prodm}$)$ a positive measure
$\eta_{(t_{1},\cdots,t_{n}),(s_{1},\cdots,s_{n})}^{(n)}$ supported on
$E_{(\cdot,t_{n}),(\cdot,s_{n})}$ such that
\begin{align}
\nonumber
d\eta_{(t_{1},\cdots,t_{n}),(s_{1},\cdots,s_{n})}^{(n)}={}^{(n)}f_{(t_{1},\cdots,t_{n})}\otimes
{}^{(n)}f_{(s_{1},\cdots,s_{n})}d(\tau_{\mathcal{R}}\otimes
\tau_{\mathcal{R}}).
\end{align}
\indent By making arguments similar to Rem.
\ref{lemma:finalprod}, for each $n$ find a positive measure
$\eta^{(n)}$ on $E_{n}$
such that
$\eta^{(n)}=\chi_{E_{n}}d(\tau_{\mathcal{R}}\otimes
\tau_{\mathcal{R}})$ and
\begin{align}
\nonumber
P^{(n)}(L^{2}(\mathcal{R}))&=\underset{t_{1}=s_{1},\cdots, t_{n-1}=s_{n-1},t_{n}\neq s_{n}}{\underset{\underline{t}(n),\text{ }\underline{s}(n):}\sum}P_{(t_{1},\cdots,t_{n}),(s_{1},\cdots,s_{n})}^{(n)}
(L^{2}(\mathcal{R}))
\cong{\underset{X\times X}\int}^{\oplus} \C_{t,s}d\eta^{(n)}(t,s),
\end{align}
where $\C_{t,s}=\C$ and $\A P^{(n)}$ is diagonalizable with respect
to this decomposition.
Note that
\begin{equation}
\eta^{(n)}(X\times
X)=\frac{c_{n-1}(k_{n}^{2}-k_{n})}{c_{n}^{2}}=\frac{1}{c_{n-1}}-\frac{1}{c_{n}}.
\end{equation}
\indent From Rem. \ref{lemma:finalprod}, note that the
measures $\eta^{(n)}$ are supported on disjoint sets. Hence by
Lemma 5.7 \cite{MR2261688},
\begin{align}\label{final_direct_int}
(1-e_{A})(L^{2}(\mathcal{R}))\cong \int_{X\times X}^
{\oplus}\C_{t,s}d\eta(t,s), \text{ where }\C_{t,s}=\C, \text{ }
\eta=\overset{\infty}{\underset{n=1}\sum}\eta^{(n)}.
\end{align}
Moreover, $\A (1-e_{A})$ is diagonalizable with respect to the
decomposition in Eq. \eqref{final_direct_int}. Clearly,
\begin{equation}
\nonumber \eta(X\times X)=\lim_{N\to \infty}\sum
_{n=1}^{N}\eta^{(n)}(X\times X)=\lim_{N\to \infty}\frac{1}{c_{0}}- \frac{1}{c_{N}}=1.
\end{equation}
\indent Finally, $\eta=\tau_{\mathcal{R}}\otimes
\tau_{\mathcal{R}}$. Indeed, for $a$, $b$ $\in$ $C(X)$,
\begin{align}
\nonumber &\int_{X\times
X}a(t)b(s)d\eta(t,s)\\
\nonumber =&\overset{\infty}{\underset{n=1}\sum}\int_{X\times X}a(t)b(s)d\eta^{(n)}(t,s)\\
\nonumber
=&\overset{\infty}{\underset{n=1}\sum}\underset{t_{1}=s_{1},\cdots, t_{n-1}=s_{n-1},t_{n}\neq s_{n}}{\underset{\underline{t}(n),\text{ }\underline{s}(n):}\sum}\int_{X\times
X}a(t)b(s){}^{(n)}f_{(t_{1},\cdots,t_{n})}(t){}^{(n)}f_{(s_{1},\cdots,s_{n})}(s)d(\tau_{\mathcal{R}}\otimes\tau_{\mathcal{R}})(t,s).
\end{align}
But
$\overset{N}{\underset{n=1}\sum}\underset{t_{1}=s_{1},\cdots, t_{n-1}=s_{n-1},t_{n}\neq s_{n}}{\underset{\underline{t}(n),\text{ }\underline{s}(n):}\sum}{}^{(n)}f_{(t_{1},\cdots,t_{n})}(t){}^{(n)}f_{(s_{1},\cdots,s_{n})}(s)\uparrow
\chi_{\Delta(X)^{c}}$ pointwise $\tau_{\mathcal{R}}\otimes
\tau_{\mathcal{R}}$ almost everywhere. Use dominated convergence
theorem and the fact $(\tau_{\mathcal{R}}\otimes
\tau_{\mathcal{R}})(\Delta (X))= 0$ to conclude
$\eta=\tau_{\mathcal{R}}\otimes \tau_{\mathcal{R}}$. This completes
the proof.
\end{proof}

For $A$, the operator $x=\overset{\infty}{\underset
{n=1}\sum}\text{ }\underset{{\underline{t}(n)}}\sum\frac{1}{\sqrt{c_{n}}}{}^{(n)}f_{(\cdot,t_{n}),(\cdot,s_{n})}$ gives rise to a choice of a vector in $(iii)$ of Thm. \ref{sum_cond}. In order to get an appropriate vector one has to apply an appropriate transformation between the Cantor set and $[0,1]$, which will induce a unitary in $\mathbf{B}(L^{2}(\mathcal{R}))$ preserving the bimodule structure. Since the \emph{Puk{\'a}nszky invariant} of every \emph{Tauer masa} is $\{1\}$, we have computed the \emph{measure-multiplicity}
\emph{invariant} of $A$. Note that $AxA$ is dense in $L^{2}(\mathcal{R})\ominus L^{2}(A)$. For $a\in A$ and any orthonormal basis $\{v_{n}\}_{n=1}^{\infty}\subset A$ of $L^{2}(A)$, one has $\sum_{n}\norm{\mathbb{E}_{A}(xav_{n}x^{*})}_{2}^{2}=\sum_{n}\int_{X}\abs{\eta_{x}^{t}(1\otimes av_{n})}^{2}d\tau_{\mathcal{R}}(t)=\norm{a}_{2}^{2}$, as $\eta_{x}=\tau_{\mathcal{R}}\otimes \tau_\mathcal{R}$ $($see Lemma 3.6 \cite{MR11932708}$)$.
This shows that the Tauer masa above satisfy conditions $(i)$ and $(ii)$ of Thm. \ref{sum_cond} with $S=AxA$. The above Tauer masa was denoted by $A(0)$
in \cite{MR2302742}. There is a Tauer masa of exactly opposite flavor,
which we call the \emph{alternating Tauer masa}.

\subsection{Alternating Tauer Masa}

$ $\\\\
\indent The \emph{alternating Tauer masa} $A(1)$ is a singular Tauer
masa in the hyperfinite $\rm{II}_{1}$ factor $\mathcal{R}$,
constructed by White and Sinclair \cite{MR2302742}. It contains
nontrivial centralizing sequences of $\mathcal{R}$. In fact, its
$\Gamma$-invariant is $1$. This masa will play a role in \S 5. In \S 4, we will
describe its \emph{left-right-measure}.\\
\indent The chain for this masa is exactly similar to the masa of
the product class described before. Let
$A(1)_{1}=D_{2}(\mathbb{C})\subset \mathcal{M}_{1}$ be the diagonal
masa. Having constructed $A(1)_{n}\subset \mathcal{N}_{n}$, one
constructs $A(1)_{n+1}$ as,
\begin{equation}
A(1)_{n+1}=\begin{cases}
       A(1)_{n}\otimes {}^{(n+1)}D_{n+1} \text{, }n \text{ even, } {}^{(n+1)}D_{n+1} \text{ is the diagonal masa in }\mathcal{M}_{k_{n+1}}(\mathbb{C}),\\
       \underset{\underline{t}(n)}\bigoplus {}^{(n)}f_{\underline{t}(n)} \otimes {}^{(n+1)}D^{{\underline{t}(n)}}\text{, }n \text{ odd
       }, {}^{(n+1)}D^{{\underline{t}(n)}} \text{ pairwise
       orthogonal in }\mathcal{M}_{k_{n+1}}(\mathbb{C}).
\end{cases}
\end{equation}
\indent We will prove that the \emph{left-right-measure} of $A(1)$
is singular with respect to the product measure. Having understood
the \emph{left-right-measures} of $A(0)$ and $A(1)$, we can describe
the same for the entire path of masas
exhibited in \cite{MR2302742}.\\

\section{$\Gamma$ and Non $\Gamma$ Masas}

In this section, we study properties of
\emph{left-right-measures} of masas that possess nontrivial
centralizing sequences of the factor. We also study properties of
\emph{left-right-measures} that prevent a masa to contain
nontrivial centralizing sequences. This section contains partial
answers. Some results in this section can be proved by bringing in
the notion of strongly mixing masas \cite{MR2465603}. To keep this paper in
a reasonable size, we postpone the notion of strong mixing to a future
paper.

\begin{defn}
A centralizing sequence in a $\rm{II}_{1}$ factor $\mathcal{M}$ is a
bounded sequence $\{x_{n}\}\subset \mathcal{M}$ such that
$\norm{x_{n}y-yx_{n}}_{2}\rightarrow 0$ as $n\rightarrow \infty$ for
all $y\in \mathcal{M}$. The centralizing sequence $\{x_{n}\}$ is
trivial, if there exists a sequence $\lambda_{n}\in \C$ such that
$\norm{x_{n}-\lambda_{n}}_{2}\rightarrow 0$ as $n\rightarrow\infty$.
\end{defn}

For a masa $A\subset \mathcal{M}$, the $\Gamma$ invariant of $A$ is
defined by
\begin{align}
\nonumber \Gamma(A)=\sup\{\tau(p):& p\in A \text{ is a projection
and }   Ap \text{ contains nontrivial centralizing sequences of
}p\mathcal{M}p\}.
\end{align}
It is immediate that $\Gamma(A)=\Gamma(\theta(A))$, where $\theta$
is an automorphism of $\mathcal{M}$ \cite{MR2302742}. If
$\Gamma(A)=0$, then $A$ is said to be totally non-$\Gamma$. We
continue to assume that $A=L^{\infty}([0,1],\lambda)$, where
$\lambda$ is the Lebesgue measure.

\begin{prop}\label{central_seq_th1}
Let $A\subset \mathcal{M}$ be a masa. Let the left-right-measure of
$A$ be $[(\lambda\otimes\lambda)+\mu]$, where
$\mu\perp\lambda\otimes \lambda$ and $\mu$ is finite. Then $A$
cannot contain non trivial centralizing sequences of $\mathcal{M}$.
Moreover, $\Gamma(A)=0$.
\end{prop}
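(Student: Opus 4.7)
My plan is to exploit the product-class summand of the left-right-measure to produce an explicit $L^{2}$-vector that detects the failure of any centralizing sequence to be scalar, and then to push the same argument into every corner $p\mathcal{M}p$ to deduce $\Gamma(A)=0$.

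Since the \emph{left-right-measure} is $[\lambda\otimes\lambda+\mu]$ with $\mu\perp\lambda\otimes\lambda$, Lemma 5.7 of \cite{MR2261688} gives an orthogonal splitting of the $A,A$-bimodule $L^{2}(\mathcal{M})\ominus L^{2}(A)$ into a summand whose spectral measure is $\lambda\otimes\lambda$ (with some multiplicity) and one whose spectral measure is $\mu$. In the former summand I would pick $\xi_{0}$ corresponding to the constant function $1$ in the first coordinate of its multiplicity space, so that $\mathbb{E}_{A}(\xi_{0})=0$ and $\eta_{\xi_{0}}=\lambda\otimes\lambda$. Treating $x\in A$ as a multiplication operator on both sides,
\begin{align*}
\|x\xi_{0}-\xi_{0}x\|_{2}^{2}=\int_{[0,1]^{2}}|x(t)-x(s)|^{2}\,d(\lambda\otimes\lambda)(t,s)=2\|x-\tau(x)\|_{2}^{2}.
\end{align*}
If $\{x_{n}\}\subset A$ is centralizing in $\mathcal{M}$, uniform boundedness together with density of $\mathcal{M}$ in $L^{2}(\mathcal{M})$ gives $\|x_{n}\xi_{0}-\xi_{0}x_{n}\|_{2}\to 0$, whence $\|x_{n}-\tau(x_{n})\|_{2}\to 0$ and $\{x_{n}\}$ is trivial.

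For the $\Gamma$-statement, fix a nonzero projection $p\in A$ and identify it with $\chi_{E_{p}}$ where $E_{p}\subset [0,1]$ satisfies $\lambda(E_{p})=\tau(p)$. Set $\xi_{p}:=p\xi_{0}p\in L^{2}(p\mathcal{M}p)$. Then $\mathbb{E}_{Ap}(\xi_{p})=p\mathbb{E}_{A}(\xi_{0})p=0$, and a direct computation yields
\begin{align*}
\langle a\xi_{p}b,\xi_{p}\rangle_{\tau_{p}}=\frac{1}{\tau(p)}\int_{E_{p}\times E_{p}}a(t)b(s)\,d(\lambda\otimes\lambda)(t,s),\qquad a,b\in Ap,
\end{align*}
where $\tau_{p}=\tau/\tau(p)$ is the normalized trace on $p\mathcal{M}p$. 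Since the tracial measure of $Ap$ with respect to $\tau_{p}$ is $\lambda_{E_{p}}/\tau(p)$, the measure class of $\xi_{p}$ inside $p\mathcal{M}p$ contains the product of tracial measures of $Ap$ as a summand. Applying the first part with $p\mathcal{M}p$ and $Ap$ in place of $\mathcal{M}$ and $A$ then shows that $Ap$ admits no nontrivial centralizing sequence of $p\mathcal{M}p$; hence no projection of positive trace contributes to the supremum defining $\Gamma(A)$, and $\Gamma(A)=0$.

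The only nontrivial input is the existence of $\xi_{0}$ with $\eta_{\xi_{0}}=\lambda\otimes\lambda$, which follows from the singular/absolutely-continuous decomposition of the left-right-measure. Once $\xi_{0}$ is in hand both parts reduce to the identity $\|x\xi_{0}-\xi_{0}x\|_{2}^{2}=2\|x-\tau(x)\|_{2}^{2}$ and the observation that this identity survives compression to any corner.
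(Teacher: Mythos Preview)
Your proof is correct and follows essentially the same approach as the paper: both arguments produce a vector $\xi_{0}\in L^{2}(\mathcal{M})\ominus L^{2}(A)$ with $\eta_{\xi_{0}}=\lambda\otimes\lambda$, compute $\|x\xi_{0}-\xi_{0}x\|_{2}^{2}$ via this measure to force triviality of any centralizing sequence in $A$, and then handle $\Gamma(A)=0$ by compressing to corners $p\mathcal{M}p$. Your treatment is slightly more explicit in the $\Gamma$-step (working directly with $\xi_{p}=p\xi_{0}p$ rather than invoking that the left-right-measure of $Ap\subset p\mathcal{M}p$ is the restriction of $\lambda\otimes\lambda+\mu$ to $E_{p}\times E_{p}$), but the content is the same.
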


\begin{proof}
Write $[0,1]\times [0,1]\setminus \Delta([0,1])=E\cup F$, where
$(\lambda\otimes \lambda)(E)=0$ and $\mu(F)=0$. There exists a
nonzero vector $\zeta\in L^{2}(\mathcal{M})\ominus L^{2}(A)$ such
that for $a,b \in C[0,1]$,
\begin{align}
\nonumber \eta_{\zeta}(a\otimes b)=\lambda(a)\lambda(b).
\end{align}
The direct integral of $\zeta$ is supported on $F$.\\
\indent If possible, let $\{a_{n}\}\subset A$ be a non trivial
centralizing sequence. By
making a density argument, we can assume that $a_{n}=a_{n}^{*}\in
C[0,1]$ and $\tau(a_{n})=0$ for all $n$. Also assume that $\underset{n}\limsup
\norm{a_{n}}_{2}=\alpha
>0$. A triangle inequality argument shows that
$\norm{a_{n}\zeta-\zeta a_{n}}_{2}\rightarrow 0$ as
$n\rightarrow\infty$. However,
\begin{align}\label{central_seq_not_going_to_zero}
\norm{a_{n}\zeta-\zeta a_{n}}_{2}^{2}=&\langle a_{n}\zeta,a_{n}\zeta\rangle-\langle\zeta a_{n},a_{n}\zeta\rangle-\langle a_{n}\zeta,\zeta a_{n}\rangle+\langle\zeta a_{n},\zeta a_{n}\rangle\\
\nonumber=&2\lambda(a_{n}^{*}a_{n}).
\end{align}
Eq. \eqref{central_seq_not_going_to_zero} shows that
$\norm{a_{n}\zeta-\zeta a_{n}}_{2}^{2}\not\rightarrow 0$ as
$n\rightarrow \infty$, which is a contradiction.\\
\indent The last statement follows from the above argument by
considering compressions of $\mathcal{M}$ by projections in $A$,
because, for any nonzero projection $p\in A$, identifying $p$ as the
indicator of a measurable set $E_{p}$, it follows that the
\emph{left-right-measure} of the inclusion $Ap\subset p\mathcal{M}p$
will be the class of the restriction of $\lambda\otimes \lambda
+\mu$ to $E_{p}\times E_{p}$.
\end{proof}

The next result is a generalization of Prop. \ref{central_seq_th1}.
We skip its proof, as the proof is similar to the proof of Prop.
\ref{central_seq_th1}.

\begin{prop}\label{no_central_on_square}
Let $A\subset \mathcal{M}$ be a masa. Let the left-right-measure of
$A$ restricted to the projection $pJqJ$ contain the product measure as a
summand, where $p$ and $q$ are nonzero projections in $A$.
Then: \\
$(i)$ $\Gamma(A)< 1$.\\
$(ii)$ If $r\geq p,q$ is any projection in $A$, then $Ar$ cannot
contain nontrivial centralizing sequences of $r\mathcal{M}r$.
\end{prop}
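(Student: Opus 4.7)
The plan is to adapt the proof of Prop.~\ref{central_seq_th1} with an additional partial-isometry step to handle the fact that the product-class summand is confined to the corner $pJqJ$. By hypothesis and Lemma 5.7 of \cite{MR2261688}, I would extract a nonzero $\zeta = p\zeta q \in L^{2}(\mathcal{M}) \ominus L^{2}(A)$ with $\eta_{\zeta}$ equal to a positive constant multiple of $\lambda|_{E_p} \otimes \lambda|_{E_q}$, where $E_p, E_q \subset [0,1]$ represent $p, q$. Since $r \geq p, q$ gives $\zeta = r\zeta r$, any bounded self-adjoint centralizing sequence $\{a_n\} \subset Ar$ of $r\mathcal{M}r$ satisfies $\norm{a_n \zeta - \zeta a_n}_2 \to 0$ by the standard $L^{2}$-density argument.

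Using the explicit form of $\eta_\zeta$, one rearranges
\[
\norm{a_n\zeta - \zeta a_n}_2^{2} = c\bigl[\lambda(E_p)\lambda(E_q)(\alpha_n^p - \alpha_n^q)^{2} + \lambda(E_q)\norm{pa_n - \alpha_n^p p}_2^{2} + \lambda(E_p)\norm{qa_n - \alpha_n^q q}_2^{2}\bigr],
\]
with $\alpha_n^p = \tau(pa_n)/\lambda(E_p)$, $\alpha_n^q = \tau(qa_n)/\lambda(E_q)$ and $c>0$. All three terms being non-negative and the LHS $\to 0$, setting $c_n := \alpha_n^p$ yields $\norm{pa_n - c_n p}_2, \norm{qa_n - c_n q}_2 \to 0$, hence $\norm{(p \vee q)a_n - c_n(p \vee q)}_2 \to 0$; if $r = p \vee q$ this already gives triviality. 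If $r > p \vee q$, set $e = p \vee q$, $f = r-e$, $\alpha_n = a_n e$, $\beta_n = a_n f$. Partition $f = \sum_{i=1}^{N} f_i$ into projections in $A$ with $\tau(f_i) \leq \tau(e)$, and pick partial isometries $v_i \in r\mathcal{M}r$ with $v_iv_i^{*} = f_i$, $v_i^{*}v_i \leq e$ (available as $\mathcal{M}$ is type $\rm{II}_1$). A direct computation gives $[a_n, v_i] = \beta_n v_i - v_i \alpha_n$, so centralizing combined with $\norm{\alpha_n - c_n e}_2 \to 0$ forces $\norm{(\beta_n - c_n f)v_i}_2 \to 0$; the trace identity $\norm{(\beta_n - c_n f)v_i}_2^{2} = \tau((\beta_n - c_n f)^{2} f_i) = \norm{(\beta_n - c_n f) f_i}_2^{2}$, summed over $i$, gives $\norm{\beta_n - c_n f}_2 \to 0$, completing triviality.

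Part (i) then follows by applying (ii) to the compressed masa $Ae \subset e\mathcal{M}e$ for any projection $e \in A$ with $\tau(e) > 1 - \min(\tau(p), \tau(q))$: then $e \wedge p, e \wedge q$ are nonzero, the product-class summand on $(e \wedge p)J(e \wedge q)J$ is inherited from that on $pJqJ$, and (ii) excludes non-trivial centralizing sequences in $Ae$, bounding $\Gamma(A) \leq 1 - \min(\tau(p), \tau(q)) < 1$. The main obstacle is the partial-isometry step: $\zeta$ only controls $a_n$ on $E_p \cup E_q$, so behavior on $E_r \setminus E_{p \vee q}$ must be extracted from the centralizing condition tested against off-diagonal elements of $r\mathcal{M}r$, with the trace identity above transferring the partial-isometry bound back to a norm bound on $\beta_n$.
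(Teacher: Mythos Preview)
Your proposal is correct. The paper itself omits the proof, saying only that it ``is similar to the proof of Prop.~\ref{central_seq_th1}.'' The first half of your argument---extracting $\zeta=p\zeta q$ with $\eta_{\zeta}$ a multiple of $\lambda|_{E_p}\otimes\lambda|_{E_q}$ and expanding $\norm{a_n\zeta-\zeta a_n}_2^2$---is exactly that similarity, and your variance decomposition of the resulting expression is clean and correct.

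Where you go beyond what the paper makes explicit is the partial-isometry propagation step. In Prop.~\ref{central_seq_th1} the product summand is global, so the computation $\norm{a_n\zeta-\zeta a_n}_2^2=2\norm{a_n}_2^2$ immediately kills any mean-zero nontrivial $\{a_n\}$. Here $\zeta$ only sees $a_n$ on $E_p\cup E_q$, so asymptotic constancy on $E_{p\vee q}$ is all one gets directly; extending this to $E_r$ genuinely requires testing the centralizing condition against off-diagonal elements of $r\mathcal{M}r$, which is precisely what your choice of $v_i$ with $v_iv_i^*=f_i\leq r-e$ and $v_i^*v_i\leq e$ accomplishes. The trace identity $\norm{(\beta_n-c_nf)v_i}_2^2=\norm{(\beta_n-c_nf)f_i}_2^2$ is the right bridge back to $A$, and summing over a finite partition of $f$ (available since $A$ is diffuse and $\tau(e)>0$) closes the argument. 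Your derivation of (i) from (ii) via compression by $e$ with $\tau(e)>1-\min(\tau(p),\tau(q))$ is also correct and gives the explicit bound $\Gamma(A)\leq 1-\min(\tau(p),\tau(q))$. In short, you have supplied a detail the paper elides; the phrase ``similar to Prop.~\ref{central_seq_th1}'' somewhat undersells the work needed.
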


\begin{prop}\label{central_seq_th2}
Let $A\subset \mathcal{M}$ be a masa. Let the left-right-measure of
$A$ be $[\nu+\mu]$, where $\mu\perp \lambda\otimes\lambda$, $\nu\ll
\lambda\otimes\lambda$, $\nu$ and $\mu$ are finite and $\nu\neq 0$. Then
$A$ cannot contain any centralizing sequence of $\mathcal{M}$
consisting of weakly null unitaries.
\end{prop}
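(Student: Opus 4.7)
The strategy mimics the proof of Proposition~\ref{central_seq_th1}, but works on the absolutely continuous component of the left-right-measure. Since $\nu \neq 0$ and $\nu \ll \lambda \otimes \lambda$, applying Lemma~5.7 of \cite{MR2261688} to the decomposition of $L^{2}(\mathcal{M}) \ominus L^{2}(A)$ into cyclic sub-bimodules furnishes a nonzero vector $\zeta \in L^{2}(\mathcal{M}) \ominus L^{2}(A)$ with $\eta_{\zeta} \ll \lambda \otimes \lambda$; write $f := d\eta_{\zeta}/d(\lambda \otimes \lambda) \in L^{1}(\lambda \otimes \lambda)$.

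Suppose for contradiction that $\{u_{n}\} \subset A$ is a centralizing sequence of weakly null unitaries. A direct computation using $u_{n}^{*}u_{n} = u_{n}u_{n}^{*} = 1$ gives
\begin{align*}
\norm{u_{n}\zeta - \zeta u_{n}}_{2}^{2} = 2\norm{\zeta}_{2}^{2} - 2\,\RE\langle u_{n}\zeta, \zeta u_{n}\rangle.
\end{align*}
The left side tends to $0$ by the centralizing hypothesis, so it suffices to show $\langle u_{n}\zeta, \zeta u_{n}\rangle \to 0$, which together with $\zeta \neq 0$ yields a contradiction.

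By definition of $\eta_{\zeta}$,
\begin{align*}
\langle u_{n}\zeta, \zeta u_{n}\rangle = \int_{[0,1]\times[0,1]} u_{n}(t)\,\overline{u_{n}(s)}\,f(t,s)\,d\lambda(t)\,d\lambda(s).
\end{align*}
When $f = g \otimes h$ for $g,h \in L^{1}(\lambda)$ the integral factors as $\bigl(\int u_{n}\, g\,d\lambda\bigr)\bigl(\int \overline{u_{n}}\, h\,d\lambda\bigr)$, and both factors vanish by weak nullity of $u_{n}$ $($which, since the $u_{n}$ are uniformly bounded in $\norm{\cdot}_{\infty}$, is equivalent to $\int u_{n} g\,d\lambda \to 0$ for every $g \in L^{1}(\lambda))$. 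For general $f \in L^{1}(\lambda\otimes\lambda)$, approximate in $L^{1}$-norm by finite linear combinations of indicators of rectangles; since $\abs{u_{n}(t)\overline{u_{n}(s)}} = 1$, the approximation error is controlled uniformly in $n$, while for each fixed approximant the integral tends to zero by the product-form case. Hence $\langle u_{n}\zeta, \zeta u_{n}\rangle \to 0$, as required.

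The essential point is that weak nullity is exactly the hypothesis needed to kill product-form integrals against elements of $L^{1}(\lambda\otimes\lambda)$. The singular part $\mu$ plays no role in the argument, reflecting the fact that the conclusion rules out only weakly null centralizing sequences rather than arbitrary nontrivial ones; this is also why the hypothesis of mere absolute continuity $\nu \ll \lambda\otimes\lambda$, rather than $\nu = \lambda\otimes\lambda$ as in Proposition~\ref{central_seq_th1}, still suffices.
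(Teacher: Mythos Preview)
Your proof is correct and takes a genuinely different, more elementary route than the paper's. The paper first reduces (without loss of generality) to the case where the density $f = d\nu/d(\lambda\otimes\lambda)$ lies in $L^{2}(\lambda\otimes\lambda)$, then invokes the machinery of \S2 to obtain the $\ell^{2}$-summability $\sum_{k}\norm{\mathbb{E}_{A}(\zeta_{0}v^{k}\zeta_{0}^{*})}_{2}^{2}<\infty$, expands $a_{n}$ in the Fourier basis $\{v^{k}\}$, and combines the tail estimate with weak nullity to force $\norm{\mathbb{E}_{A}(\zeta_{0}a_{n}\zeta_{0}^{*})}_{1}\to 0$; the contradiction then comes from the centralizing property, which forces this quantity to converge instead to $\norm{\mathbb{E}_{A}(\zeta_{0}\zeta_{0}^{*})}_{1}>0$. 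Your argument bypasses all of this: you work directly with $\langle u_{n}\zeta,\zeta u_{n}\rangle$ as an integral against $f\in L^{1}(\lambda\otimes\lambda)$, and the observation that $u_{n}\otimes\overline{u_{n}}$ is a bounded sequence converging weak-$*$ to zero in $L^{\infty}(\lambda\otimes\lambda)$ (checked on simple tensors, extended by $L^{1}$-density) finishes the job. Your approach needs only $f\in L^{1}$, avoids the conditional-expectation and Fourier-coefficient apparatus entirely, and is arguably the ``right'' proof; the paper's version has the advantage of illustrating how the summability condition of Thm.~\ref{sum_cond} interacts with centralizing sequences, which is thematically consistent with \S2.
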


\begin{proof}
Without loss of generality, we can assume that
$f=\frac{d\nu}{d(\lambda\otimes\lambda)}\in
L^{2}(\lambda\otimes\lambda)$. Write $[0,1]\times [0,1]\setminus
\Delta([0,1])=E\cup F$, where $\nu(E)=0$ and $\mu(F)=0$. There
exists a nonzero vector $\zeta_{0}\in L^{2}(\mathcal{M})\ominus
L^{2}(A)$ such that for $a,b \in C[0,1]$,
\begin{align}
\nonumber \eta_{\zeta_{0}}(a\otimes b)=\int_{[0,1]\times
[0,1]}a(t)b(s)f(t,s)d\lambda(t)d\lambda(s).
\end{align}
The direct integral of $\zeta_{0}$ is supported on $F$. Arguing as in the
proof of Thm. \ref{measure_to_sum_all_puk}, we conclude that
$\mathbb{E}_{A}(\zeta_{0} b\zeta_{0}^{*})\in L^{2}(A)$ for all $b\in
C[0,1]$ and $\sum_{k\in \mathbb{Z}}\norm{\mathbb{E}_{A}(\zeta_{0}
v^{k}\zeta_{0}^{*})}_{2}^{2}<\infty$, where $v\in A$ is the Haar
unitary
generator corresponding to the function $t\mapsto e^{2\pi it}$.\\
\indent Suppose to the contrary, there is a sequence
$\{a_{n}\}\subset C[0,1]\subset A$ of weakly null unitaries that
centralize $\mathcal{M}$. Given $\epsilon>0$, choose $k_{0}\in
\mathbb{N}$ such that $\sum_{\abs{k}\geq
k_{0}}\norm{\mathbb{E}_{A}(\zeta_{0}v^{k}\zeta_{0}^{*})}_{2}^{2}<\epsilon^{2}$.
Therefore on one hand,
\begin{align}
\nonumber \norm{\mathbb{E}_{A}(\zeta_{0}a_{n}\zeta_{0}^{*})}_{1}&=\norm{\sum_{k\in \mathbb{Z}}\tau(a_{n}v^{-k})\mathbb{E}_{A}(\zeta_{0}v^{k}\zeta_{0}^{*})}_{1}\\
\nonumber &\leq\norm{\sum_{\abs{k}<k_{0}}\tau(a_{n}v^{-k})\mathbb{E}_{A}(\zeta_{0}v^{k}\zeta_{0}^{*})}_{1}+\norm{\sum_{\abs{k}\geq k_{0}}\tau(a_{n}v^{-k})\mathbb{E}_{A}(\zeta_{0}v^{k}\zeta_{0}^{*})}_{1}\\
\nonumber &\leq\sum_{\abs{k}<k_{0}}\abs{\tau(a_{n}v^{-k})}\norm{\mathbb{E}_{A}(\zeta_{0}v^{k}\zeta_{0}^{*})}_{1}+\sum_{\abs{k}\geq k_{0}}\abs{\tau(a_{n}v^{-k})}\norm{\mathbb{E}_{A}(\zeta_{0}v^{k}\zeta_{0}^{*})}_{2}\\
\nonumber &\leq\sum_{\abs{k}<k_{0}}\abs{\tau(a_{n}v^{-k})}\norm{\mathbb{E}_{A}(\zeta_{0}v^{k}\zeta_{0}^{*})}_{1}+\left(\sum_{\abs{k}\geq k_{0}}\abs{\tau(a_{n}v^{-k})}^{2}\right)^{\frac{1}{2}}\left(\sum_{\abs{k}\geq k_{0}}\norm{\mathbb{E}_{A}(\zeta_{0}v^{k}\zeta_{0}^{*})}_{2}^{2}\right)^{\frac{1}{2}}\\
\nonumber
&\leq\sum_{\abs{k}<k_{0}}\abs{\tau(a_{n}v^{-k})}\norm{\mathbb{E}_{A}(\zeta_{0}v^{k}\zeta_{0}^{*})}_{1}+\epsilon.
\end{align}
Since $a_{n}\overset{w.o.t}\rightarrow 0$ and $\epsilon$ is
arbitrary, so
$\norm{\mathbb{E}_{A}(\zeta_{0}a_{n}\zeta_{0}^{*})}_{1}\rightarrow
0$. On the other hand, $\norm{a_{n}^{*}\zeta_{0}
a_{n}\zeta_{0}^{*}-\zeta_{0}\zeta_{0}^{*}}_{1}\rightarrow 0$ as
$n\rightarrow \infty$ and consequently
$\norm{a_{n}^{*}\mathbb{E}_{A}(\zeta_{0}
a_{n}\zeta_{0}^{*})-\mathbb{E}_{A}(\zeta_{0}\zeta_{0}^{*})}_{1}\rightarrow
0$. So $\norm{\mathbb{E}_{A}(\zeta_{0}
a_{n}\zeta_{0}^{*})}_{1}\rightarrow
\norm{\mathbb{E}_{A}(\zeta_{0}\zeta_{0}^{*})}_{1}$. This is a
contradiction as $\zeta_{0}$ is nonzero.
\end{proof}

\begin{cor}
The left-right-measure of $A(1)$ is singular with respect to the product class.
\end{cor}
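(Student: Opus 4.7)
The plan is to apply Proposition \ref{central_seq_th2} in its contrapositive form, exploiting the alternating structure of $A(1)$ to produce inside $A(1)$ a nontrivial centralizing sequence of $\mathcal{R}$ consisting of weakly null unitaries. The identification of $A(1)$ with $L^\infty([0,1],\lambda)$ required by that proposition is harmless, since every diffuse abelian subalgebra with faithful normal trace admits such an identification and the hypothesis of Prop.~\ref{central_seq_th2} is invariant under trace-preserving isomorphism.

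For the construction of the unitaries: for each odd $m \geq 3$ the recipe for $A(1)$ reads $A(1)_m = A(1)_{m-1} \otimes {}^{(m)}D_m$ with ${}^{(m)}D_m$ the diagonal masa of $\mathcal{M}_{k_m}(\C)$. Since $k_m \geq 2$, I can pick $u_m \in {}^{(m)}D_m$ whose diagonal entries are $\omega, \omega^2,\dots,\omega^{k_m}$ with $\omega = e^{2\pi i / k_m}$; then $u_m$ is a unitary satisfying $\tau(u_m) = 0$ and $\|u_m\|_2 = 1$. Its canonical embedding $1_{A(1)_{m-1}} \otimes u_m$ lies in $A(1)_m \subset A(1)$.

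Next I would verify three properties of $\{u_m\}_{m \text{ odd}}$. Centralizing: $u_m$ commutes with $\mathcal{N}_{m-1}$ because it lives in the $m$-th tensor factor of $\mathcal{N}_m$, and for arbitrary $x \in \mathcal{R}$ the usual three-term triangle inequality with $x_k \in \mathcal{N}_k$ satisfying $\|x - x_k\|_2 < \epsilon/2$ upgrades this to $\|u_m x - x u_m\|_2 < \epsilon$ for all $m > k$. Nontrivial: $\tau(u_m) = 0$ and $\|u_m\|_2 = 1$ yield $\|u_m - \lambda\|_2^2 = 1 + |\lambda|^2 \geq 1$ for every $\lambda \in \C$, so the sequence cannot be approximated by scalars. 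Weakly null: for $x \in \mathcal{N}_{m-1}$, the factorized trace gives $\tau(u_m x) = \tau(u_m)\tau(x) = 0$, and $L^2$-density of $\bigcup_n \mathcal{N}_n$ in $\mathcal{R}$ extends $\tau(u_m x) \to 0$ to every $x \in \mathcal{R}$.

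Finally, suppose toward contradiction that the left-right-measure $[\eta]$ of $A(1)$ is not singular with respect to $\lambda \otimes \lambda$. The Lebesgue decomposition then furnishes a representative $\nu + \mu$ with $\nu \ll \lambda \otimes \lambda$, $\mu \perp \lambda \otimes \lambda$, both finite, and $\nu \neq 0$. Proposition~\ref{central_seq_th2} immediately rules out the existence in $A(1)$ of any centralizing sequence of $\mathcal{R}$ consisting of weakly null unitaries, contradicting $\{u_m\}$ built above. The main obstacle is only of a bookkeeping nature: confirming that the three properties hold for the same sequence. No serious analytic difficulty arises, because the tensor-product structure of the chain $\{\mathcal{N}_n\}$ together with the roots-of-unity choice of entries for $u_m$ decouples the three verifications.
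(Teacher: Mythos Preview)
Your proposal is correct and follows exactly the paper's approach: the paper's one-line proof simply asserts that ``by construction $A(1)$ contains a centralizing sequence of weakly null unitaries'' and invokes Proposition~\ref{central_seq_th2}, while you have spelled out the construction of that sequence (diagonal unitaries with root-of-unity entries in the odd-indexed tensor factors) and verified its three required properties. There is nothing to correct; your argument is precisely the content hidden behind the phrase ``by construction.''
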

\begin{proof}
By construction $A(1)$ contains a centralizing sequence of weakly null unitaries.
\end{proof}

\begin{cor}\label{McDuff_corollary}
Every strongly stable $($McDuff$)$ factor contains a
singular masa whose left-right-measure is singular with respect to
the product class.
\end{cor}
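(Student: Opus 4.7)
The plan is to realize the desired masa as a tensor with the alternating Tauer masa $A(1) \subset \mathcal{R}$ from \S3.2 and then invoke the contrapositive of Prop.~\ref{central_seq_th2}. Since $\mathcal{M}$ is McDuff, fix an isomorphism $\mathcal{M} \cong \mathcal{M}_{0} \otimes \mathcal{R}$ for some separable $\rm{II}_{1}$ factor $\mathcal{M}_{0}$. Pick a singular masa $B \subset \mathcal{M}_{0}$; one exists in every separable $\rm{II}_{1}$ factor by Popa's construction. Set $A := B \otimes A(1)$. This is a masa in $\mathcal{M}_{0} \otimes \mathcal{R} \cong \mathcal{M}$, and it is singular because the tensor product of two singular masas is singular (a standard fact from the theory of singular masas).

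Next I would transport the centralizing sequence of $\mathcal{R}$ housed inside $A(1)$ up to one inside $A$. Since $\Gamma(A(1)) = 1$, pick a sequence $\{a_{n}\} \subset A(1)$ of unitaries that is weakly null in $\mathcal{R}$ and centralizes $\mathcal{R}$; this is the same sequence used in the proof of the preceding corollary. Put $y_{n} := 1_{\mathcal{M}_{0}} \otimes a_{n} \in A$; each $y_{n}$ is a unitary in $\mathcal{M}$. For an elementary tensor $m \otimes r \in \mathcal{M}_{0} \otimes \mathcal{R}$,
\begin{align}
\nonumber \norm{y_{n}(m \otimes r) - (m \otimes r)y_{n}}_{2} = \norm{m \otimes (a_{n}r - ra_{n})}_{2} = \norm{m}_{2}\,\norm{a_{n}r - ra_{n}}_{2} \longrightarrow 0,
\end{align}
and $\tau(y_{n}^{*}(m \otimes r)) = \tau(m)\,\tau(a_{n}^{*}r) \to 0$ since $a_{n}$ is weakly null in $\mathcal{R}$. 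Since $\norm{y_{n}} = 1$ and linear combinations of elementary tensors are $\norm{\cdot}_{2}$-dense in $L^{2}(\mathcal{M})$, a triangle inequality argument extends both limits to arbitrary $z \in \mathcal{M}$ and $w \in L^{2}(\mathcal{M})$. Hence $\{y_{n}\} \subset A$ is a centralizing sequence of $\mathcal{M}$ consisting of weakly null unitaries.

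Finally, let $[\nu + \mu]$ be the Lebesgue decomposition of the left-right-measure of $A$ with respect to $\lambda \otimes \lambda$, where $\nu \ll \lambda \otimes \lambda$ and $\mu \perp \lambda \otimes \lambda$. If $\nu \neq 0$, then Prop.~\ref{central_seq_th2} forbids the existence of any centralizing sequence of weakly null unitaries of $\mathcal{M}$ inside $A$, contradicting the previous paragraph. Therefore $\nu = 0$, so the left-right-measure of the singular masa $A$ is singular with respect to the product class, establishing the corollary.

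The principal potential obstacles are the two background inputs invoked in the first paragraph: the existence of a singular masa in an arbitrary separable $\rm{II}_{1}$ factor, and the stability of singularity under tensor products of masas. Neither is proved in the excerpt, but both are known from Popa's and Sinclair--Smith's work on singular masas; everything else in the argument is an immediate application of Prop.~\ref{central_seq_th2}.
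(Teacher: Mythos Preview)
Your proof is correct and uses the same construction as the paper: write $\mathcal{M}\cong\mathcal{M}_{0}\overline{\otimes}\mathcal{R}$, take a singular masa $B\subset\mathcal{M}_{0}$ (Popa), and form $A=B\overline{\otimes}A(1)$. The only difference is in the final verification. The paper deduces singularity of the left-right-measure of $A$ by invoking the tensor-product formula for left-right-measures (Prop.~5.2 of \cite{MR2261688} and Lemma~3.5 of \cite{MR11932708}) together with the preceding corollary that the left-right-measure of $A(1)$ is singular to the product class. You instead lift the centralizing sequence of weakly null unitaries from $A(1)$ up to $1\otimes a_{n}\in A$ and apply Prop.~\ref{central_seq_th2} directly to $A\subset\mathcal{M}$. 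Your route is marginally more self-contained, since it bypasses the external tensor formula; the paper's route has the advantage of actually identifying the left-right-measure class rather than merely showing its absolutely continuous part vanishes. The two background facts you flag (existence of a singular masa, stability of singularity under tensors) are exactly the references \cite{MR693226} and \cite{MR999995} that the paper cites.
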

\begin{proof}
For existence of a singular masa in a $\rm{II}_{1}$ factor see \cite{MR693226}. The statement follows by tensoring any singular masa in the factor by the alternating Tauer masa in $\mathcal{R}$
$($see \cite{MR693226,MR999995}, Prop. 5.2 \cite{MR2261688} and Lemma 3.5 \cite{MR11932708}$)$.
\end{proof}

It is now natural to ask the following question. If $\mathcal{M}=M_{1}\overline{\otimes} M_{2}$, where both $M_{1},M_{2}$ are $\rm{II}_{1}$ factors then does $\mathcal{M}$ contain a $($singular$)$ masa whose \emph{left-right-measure} is singular with respect to the
product class?\\
$$ $$
\begin{defn}\label{define_rigid_measure}
A finite measure $\mu$ on $[0,1]$ $($or $S^{1})$ is called $\alpha$-\emph{rigid}
for $\abs{\alpha}=1$, if and only if, there is a subsequence
$\widehat{\mu}_{n_{k}}$ of $\widehat{\mu}_{n}=\int_{0}^{1}e^{-2\pi
int}d\mu(t)$ $($or $\widehat{\mu}_{n}=\int_{S^{1}}z^{-n}d\mu(z))$ that converges to $\alpha\mu([0,1])$ $($or $\alpha\mu(S^{1}))$ as $k\rightarrow
\infty$. A $1$-rigid measure is called rigid or a Dirichlet measure.
\end{defn}

\indent We now recall some properties of $\alpha$-rigid measures.
For details check Ch.7 \cite{MR1719722}. Let $\mu$ be a $\alpha$-rigid
measure on $[0,1]$. Any sequence $n_{k}$ along which
$\widehat{\mu}_{n_{k}}$ converges to $\alpha\mu([0,1])$ is said to
be a sequence associated with $\mu$. It is easy to see that, $\mu$
is $\alpha$-rigid, if and only if, the sequence of functions $[0,1]\ni
t\mapsto e^{-2\pi in_{k}t}$ converges to $\alpha$ in $\mu$-measure.
Thus $\nu$ is $\alpha$-rigid with associated sequence $n_{k}$ for
any $\nu\ll \mu$. So $\alpha$-rigidity is a property of equivalence
class of measures, and hence can be thought of as a property of
unitary operators, by considering appropriate Koopman operators.
Atomic measures are always rigid.\\

\indent To motivate what follows, we consider rigid m.p. transformations. Let $T$ be a m.p. automorphism of a standard probability space $(X,\mu)$. Let $U_{T}$ denote the associated Koopman operator on $L^{2}(X,\mu)$. The transformation $T$ is said to be rigid if $1\in \overline{\{U_{T}^{n}\}_{n\in \mathbb{Z}\setminus \{0\}}}^{s.o.t}$ \cite{MR1940356}.\\
\indent Assume further that $T$ is weakly mixing. Then $L(\mathbb{Z})\subset L^{\infty}(X,\mu)\rtimes_{T}\mathbb{Z}$ is a singular masa \cite{MR0268687} $($also see Thm. 2.1 \cite{MR1940356}$)$. Let $U_{T}^{n_{k}}\overset{s.o.t}\rightarrow 1$ as $k\rightarrow\infty$. A simple calculation shows that $L(\mathbb{Z})$ contains a centralizing sequence of the crossed product factor consisting of powers of the standard Haar unitary generator. It is not known whether this is always the case for $\Gamma$-masas. Let $\nu$ $($which is a measure on $\widehat{\mathbb{Z}}=S^{1})$ denote the maximal spectral type of the action $T$. Then there is a unit vector $f\in L^{2}(X,\mu)$ such that $\widehat{\nu}_{n}=\langle U_{T}^{n}f,f\rangle$ for all $n\in \mathbb{Z}$. It follows that $\nu$
is a Dirichlet measure. The relationship between the maximal spectral type of an action and the \emph{left-right measure}
of the associated masa appeared in Prop. 3.1 \cite{MR1940356}. Thus by general theory of $\alpha$-rigid measures $($see Ch. 7 \cite{MR1719722}$)$, it follows that for $\lambda$ almost all $t$ $(\lambda$ is Haar measure$)$, the measure $\tilde{\eta}^{t}$ is $\alpha$-rigid for all $\alpha\in S^{1}$.\\
\indent In the general case, when $A$ contains a nontrivial centralizing sequence of $\mathcal{M}$, one can choose a central sequence consisting of trigonometric polynomials without constant term. We do not know whether we can choose a central sequence of the form $t\mapsto e^{2\pi in_{k}t}$. In case we can, results analogous to the crossed product situation hold.

$$ $$
\indent Making appropriate changes to the proof of Lemma
\ref{identify_disintegrated_measure}, we get the following result.
Its proof uses basic facts about $L^{1}$ spaces associated to finite
von Neumann algebras. We omit its proof.
\begin{lem}\label{compute_l_1_norm}
Let $\zeta\in L^{2}(\mathcal{M})$ be such that
$\mathbb{E}_{A}(\zeta)=0$. Let $\eta_{\zeta}$ denote the measure on
$[0,1]\times [0,1]$ defined in Eq. \eqref{measure_from_kappa}. Let
$b,w\in C[0,1]$. Then
\begin{align}
\nonumber \norm{\mathbb{E}_{A}(b\zeta
w\zeta^{*})}_{1}=\int_{0}^{1}\abs{b(t)}\abs{{\eta_{\zeta}^{t}(1\otimes
w)}}d\lambda(t).
\end{align}
\end{lem}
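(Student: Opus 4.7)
The statement to prove is a computation of the $L^{1}$-norm parallel to the $L^{2}$-norm identity in $3^{\circ}$ of Lemma \ref{identify_disintegrated_measure}, so the natural plan is to mimic that proof with $L^{1}$--$L^{\infty}$ duality in place of $L^{2}$ self-duality. Concretely, I would compute
\begin{align*}
\norm{\mathbb{E}_{A}(b\zeta w\zeta^{*})}_{1}
=\underset{\norm{a}_{\infty}\leq 1}{\underset{a\in C[0,1]}\sup}
\abs{\tau\bigl(a\,\mathbb{E}_{A}(b\zeta w\zeta^{*})\bigr)},
\end{align*}
which is legitimate because $C[0,1]$ is weak-$*$ dense in $A\cong L^{\infty}([0,1],\lambda)$, so its unit ball is weak-$*$ dense in the unit ball of $A$ and realises the $L^{1}$-norm of any element of $L^{1}(A)$.

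Next, using the trace property and continuity of the extension of $\tau$ to $L^{1}(\mathcal{M})$, I would drop the conditional expectation to write
\begin{align*}
\tau\bigl(a\,\mathbb{E}_{A}(b\zeta w\zeta^{*})\bigr)=\tau(ab\zeta w\zeta^{*})=\kappa_{\zeta,\zeta}(ab\otimes w)=\int_{[0,1]\times[0,1]}a(t)b(t)w(s)\,d\eta_{\zeta}(t,s),
\end{align*}
appealing to Eq. \eqref{measure_from_kappa}. Then I would apply Defn. \ref{definition_of_disintegration} to the $(\pi_{1},\lambda)$-disintegration of $\eta_{\zeta}$, rewriting the last integral as
\begin{align*}
\int_{0}^{1}a(t)b(t)\,\eta_{\zeta}^{t}(1\otimes w)\,d\lambda(t).
\end{align*}
The integrability of $t\mapsto b(t)\eta_{\zeta}^{t}(1\otimes w)$ with respect to $\lambda$ is guaranteed by $2^{\circ}$ of Lemma \ref{identify_disintegrated_measure}, so this is a genuine $L^{1}(\lambda)$-function paired with the continuous function $a$.

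Finally, the supremum over $a\in C[0,1]$ of unit sup-norm of the absolute value of this integral equals $\int_{0}^{1}\abs{b(t)}\abs{\eta_{\zeta}^{t}(1\otimes w)}\,d\lambda(t)$, as follows from the $L^{1}$--$L^{\infty}$ duality on $([0,1],\lambda)$ (choose continuous approximants to the measurable sign of $b(t)\eta_{\zeta}^{t}(1\otimes w)$ via Lusin's theorem). There is no real obstacle here; the only minor delicate point is the weak-$*$ density / Lusin step to transfer the supremum from $C[0,1]$ to a genuine $L^{\infty}$ supremum, but since both sides are continuous in $a$ in the weak-$*$ topology, this is routine and is the reason the author flags the proof as parallel to Lemma \ref{identify_disintegrated_measure} and omits it.
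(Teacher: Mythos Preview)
Your proposal is correct and follows exactly the approach the paper indicates: the paper omits the proof, stating only that it is obtained by ``making appropriate changes to the proof of Lemma \ref{identify_disintegrated_measure}'' together with ``basic facts about $L^{1}$ spaces associated to finite von Neumann algebras,'' which is precisely the $L^{1}$--$L^{\infty}$ duality argument you give in place of the $L^{2}$ duality of $3^{\circ}$.
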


\begin{thm}\label{cenral_seq_torigid}
Let $A\subset \mathcal{M}$ be a singular masa. Let $v\in A$ be a
Haar unitary generator of $A$. Suppose there exists a subsequence
$n_{k}$ $(n_{k}<n_{k+1}$ for all k$)$ such that for all $y\in
\mathcal{M}$,
\begin{align}
\nonumber \norm{v^{n_{k}}y-yv^{n_{k}}}_{2}\rightarrow 0 \text{ as }
k\rightarrow \infty.
\end{align}
Then the measure $\tilde{\eta}^{t}$ is $\beta$-rigid for all
$\beta\in S^{1}$, $\lambda$ almost all $t$, where
$[\eta]$ is the left-right-measure of $A$.
\end{thm}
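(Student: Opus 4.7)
The plan is to extract from the centralizing hypothesis the measure-theoretic statement that $e^{2\pi in_k(t-s)}\to 1$ in $\eta$-measure on $[0,1]\times[0,1]$, pass this to the fiber measures $\tilde\eta^{t}$ via the $(\pi_{1},\lambda)$-disintegration, and finally use Weyl's equidistribution theorem to convert the resulting ``twisted'' convergence of Fourier coefficients into genuine $\beta$-rigidity for every $\beta\in S^{1}$.

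First I would extend the hypothesis to $L^{2}$ vectors by a routine $\norm{\cdot}_{2}$-density argument: for every $\zeta\in L^{2}(\mathcal{M})$, $\norm{v^{n_k}\zeta-\zeta v^{n_k}}_{2}\to 0$. For $\zeta\in L^{2}(\mathcal{M})\ominus L^{2}(A)$, expanding the square and using $v(t)=e^{2\pi it}$ gives
\begin{align*}
\norm{v^{n_k}\zeta-\zeta v^{n_k}}_{2}^{2}=2\norm{\zeta}_{2}^{2}-2\RE\int_{[0,1]^{2}}e^{2\pi in_{k}(t-s)}\,d\eta_{\zeta}(t,s),
\end{align*}
so $\int(1-\cos(2\pi n_{k}(t-s)))\,d\eta_{\zeta}\to 0$. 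Choose a countable family $\{\zeta_{i}\}\subset L^{2}(\mathcal{M})\ominus L^{2}(A)$ whose cyclic bimodules together exhaust $L^{2}(\mathcal{M})\ominus L^{2}(A)$ and positive weights $c_{i}$ with $\eta:=\sum_{i}c_{i}\eta_{\zeta_{i}}$ a probability measure representing the left-right-measure class (with $\eta(\Delta([0,1]))=0$). Applying the previous display termwise and invoking dominated convergence (integrands bounded by $2$, $\eta$ finite) yields
\begin{align*}
\int_{[0,1]^{2}}(1-\cos(2\pi n_{k}(t-s)))\,d\eta(t,s)\longrightarrow 0,
\end{align*}
and passing to a subsequence, still denoted $n_{k}$, gives $e^{2\pi in_{k}(t-s)}\to 1$ for $\eta$-a.e.\ $(t,s)$.

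By Fubini applied to the $(\pi_{1},\lambda)$-disintegration of $\eta$, the $\eta$-null set of non-convergence slices to a $\tilde\eta^{t}$-null set for $\lambda$-a.e.\ $t$. So for such $t$, $e^{2\pi in_{k}(t-s)}\to 1$ for $\tilde\eta^{t}$-a.e.\ $s$, and dominated convergence gives
\begin{align*}
e^{2\pi in_{k}t}\,\widehat{\tilde\eta^{t}}_{n_{k}}=\int_{[0,1]}e^{2\pi in_{k}(t-s)}\,d\tilde\eta^{t}(s)\longrightarrow\tilde\eta^{t}([0,1]).
\end{align*}
The final ingredient is Weyl's classical equidistribution theorem: since $(n_{k})$ is a strictly increasing sequence of integers, $\{n_{k}t\bmod 1\}$ is equidistributed -- hence dense -- in $[0,1)$ for $\lambda$-a.e.\ $t$, so $\{e^{-2\pi in_{k}t}\}_{k}$ is dense in $S^{1}$. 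For $t$ in the intersection of the two full-measure sets and any prescribed $\beta\in S^{1}$, extract a sub-subsequence $(n_{k_{l}})$ with $e^{-2\pi in_{k_{l}}t}\to\beta$; combining with the previous display gives $\widehat{\tilde\eta^{t}}_{n_{k_{l}}}\to\beta\,\tilde\eta^{t}([0,1])$, which is precisely $\beta$-rigidity of $\tilde\eta^{t}$.

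The main obstacle I expect is the passage from the vector-by-vector convergence furnished by the hypothesis to convergence against the left-right-measure itself: one must choose a summable representative $\eta$ of the measure class and invoke dominated convergence to pass from the individual $\eta_{\zeta_{i}}$ to $\eta$ (alternatively, one could pick a ``unit section'' of the direct-integral decomposition of $L^{2}(\mathcal{M})\ominus L^{2}(A)$, but that requires some measurable-field machinery). After that, the Fubini transfer from ``$\eta$-a.e.\ $(t,s)$'' to ``$\lambda$-a.e.\ $t$, $\tilde\eta^{t}$-a.e.\ $s$'' and the appeal to Weyl's theorem are standard.
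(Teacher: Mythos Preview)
Your proof is correct and follows the same overall arc as the paper's: pass from the centralizing hypothesis to a statement about Fourier coefficients of the fiber measures, then use a number-theoretic fact about $\{n_{k}t\bmod 1\}$ to upgrade to $\beta$-rigidity for every $\beta\in S^{1}$.

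The implementation differs in a few places worth noting. The paper first reduces to the standard Haar unitary $w(t)=e^{2\pi it}$ via a measure-preserving Borel isomorphism of $[0,1]$; you implicitly assume $v(t)=e^{2\pi it}$ when you expand the square, so that reduction should be stated. The paper then works with a \emph{single} vector $\zeta$ realizing $\eta=\eta_{\zeta}$ and passes through the $L^{1}$-norm identity of Lemma~\ref{compute_l_1_norm} to obtain $\int_{0}^{1}\abs{e^{-2\pi in_{k}t}\eta^{t}(1\otimes v^{n_{k}})-\mathbb{E}_{A}(\zeta\zeta^{*})(t)}\,d\lambda(t)\to 0$, whereas you expand $\norm{v^{n_{k}}\zeta-\zeta v^{n_{k}}}_{2}^{2}$ directly against $\eta_{\zeta}$ and build $\eta$ as a weighted sum of such measures. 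Both routes land at the same fiberwise statement $e^{2\pi in_{k}t}\widehat{\tilde\eta^{t}}_{n_{k}}\to\tilde\eta^{t}([0,1])$; yours is slightly more elementary in that it avoids the $L^{1}$ lemma, while the paper's is shorter because the existence of a single cyclic $\zeta$ with $\eta_{\zeta}=\eta$ makes the summation step unnecessary. For the final step, the paper invokes (via Nadkarni) the fact that for a.e.\ $t$ the limit set of $\{e^{-2\pi in_{k}t}\}$ contains an irrational rotation, and then that $\alpha$-rigidity for one irrational $\alpha$ forces $\beta$-rigidity for all $\beta$; your appeal to the a.e.\ equidistribution of $(n_{k}t)$ for an arbitrary strictly increasing integer sequence (a standard consequence of Weyl's criterion, though not Weyl's original theorem for the full sequence $(nt)$) gives density directly and is perhaps the cleaner route.
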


\begin{proof}
Let $w$ be the Haar unitary generator of $A$ that corresponds to the function $[0,1]\ni
t\mapsto e^{2\pi it}$. The map from $L^{\infty}([0,1],\lambda)$ to itself, which sends
$v^{n}$ to $w^{n}$ for $n\in\mathbb{Z}$, implements a m.p. Borel isomorphism $T:[0,1]\mapsto [0,1]$. Then $T\times T$ implements
an unitary $U:L^{2}(\mathcal{M})\mapsto L^{2}(\mathcal{M})$, which preserves the structure of $L^{2}(\mathcal{M})$ as the
natural $A,A$-bimodule $($see Defn. \ref{mminv}$)$. Standard density arguments show that if
$\xi\in L^{2}(\mathcal{M})$, then
\begin{align}
\nonumber \norm{v^{n_{k}}\xi-\xi v^{n_{k}}}_{2}\rightarrow 0 \text{
as } k\rightarrow \infty.
\end{align}
So, we can assume $v=w$.\\
\indent We know that there is a nonzero vector $\zeta\in
L^{2}(\mathcal{M})\ominus L^{2}(A)$ such that $\eta=\eta_{\zeta}$.
Therefore $\norm{\mathbb{E}_{A}( v^{-n_{k}}\zeta v^{n_{k}}\zeta^{*}
)-\mathbb{E}_{A}(\zeta\zeta^{*})}_{1}\rightarrow 0$ as $k\rightarrow
\infty$. Consequently, using similar arguments that are needed to
prove Lemma \ref{compute_l_1_norm}, we have,
\begin{align}
\nonumber\norm{\mathbb{E}_{A}( v^{-n_{k}}\zeta v^{n_{k}}\zeta^{*}
)-\mathbb{E}_{A}(\zeta\zeta^{*})}_{1}=\int_{0}^{1}\abs{e^{-2\pi
in_{k}t}\eta^{t}(1\otimes v^{n_{k}})-\mathbb{E}_{A}(\zeta
\zeta^{*})(t)}d\lambda(t)\rightarrow 0
\end{align}
as $k\rightarrow \infty$. Hence, there exists a further subsequence
$n_{k_{l}}$ and a subset $E\subset [0,1]$ such that $\lambda(E)=0$,
and for $t\in E^{c}$,
\begin{align}\label{convae}
e^{-2\pi in_{k_{l}}t}\eta^{t}(1\otimes
v^{n_{k_{l}}})-\mathbb{E}_{A}(\zeta \zeta^{*})(t)\rightarrow 0
\text{ as }l\rightarrow \infty.
\end{align}
Note that $\mathbb{E}_{A}(\zeta
\zeta^{*})(t)=\tilde{\eta}^{t}([0,1])<\infty$ $($see Lemma \ref{identify_disintegrated_measure}$)$ almost everywhere $\lambda$.\\
\indent It is known that for almost every $\beta\in [0,1]$ $($with respect to $\lambda)$, the set of limit points of the sequence $e^{-2\pi i n_{k_{l}}\beta}$ contains a point of the form $e^{2\pi i\alpha}$ with $\alpha$ irrational $($see Ch. 7 \cite{MR1719722}$)$. Thus by enlarging the null set $E$ and renaming it to be $E$ again, we conclude that $e^{-2\pi
in_{k_{l_{m}}}t}\rightarrow e^{2\pi i\alpha_{t}}$ for $t\in E^{c}$, $\alpha_{t}$ irrational. The subsequence in the last statement depends on $t$. By a diagonal argument, it follows that for $t\in E^{c}$, the measure $\tilde{\eta}^{t}$ is $\beta$-rigid for all $\beta\in S^{1}$ $($see Ch. 7 \cite{MR1719722}$)$.
\end{proof}

\begin{rem}
Examples of singular masas in the hyperfinite $\rm{II}_{1}$ factor
can be constructed that satisfy the hypothesis of Thm.
\ref{cenral_seq_torigid}. There exist weakly mixing actions of a
stationary Gaussian process that has the desired properties $($check \S5
\cite{MR0291413}$)$. In fact, if $A$ is Cartan in $\mathcal{R}$, then there is a centralizing sequence in $A$ consisting of powers of some Haar unitary generator. This follows from Thm. 5.5 \cite{MR11932708}, Prop. 3.1 \cite{MR1940356}, Thm. 4 \cite{MR0291413} and \cite{MR662736}. For example, consider the Cartan masa in the hyperfinite $\rm{II}_{1}$ factor which arises from a irrational rotation along the direction of the group.
\end{rem}

\section{Examples of Singular Masas in the Free Group Factors}

In this section, we show that given any subset $S$ of $\mathbb{N}$,
there are uncountably many pairwise non conjugate singular masas in
$L(\mathbb{F}_{k})$, $k\geq 2$, with Puk{\'a}nszky invariant $S\cup
\{\infty\}$. All examples exhibited in this section are
constructed from examples appearing in \cite{MR2261688,MR2163938}.
For any masa $A$ considered in this section, we
assume $A=L^{\infty}([0,1],\lambda)$, where $\lambda$ is the
Lebesgue measure. If $A\subset \mathcal{M}$ is a masa and $[\eta]$
is its \emph{left-right-measure}, then we will most of the time
assume that $\eta(\Delta[0,1])=0$. There are few exceptions to this
assumption in this section, in which case we shall notify accordingly. The next
two corollaries are direct applications of results in
\cite{MR2261688} $($see Lemma 3.1, Thm. 3.2, Lemma 5.7 and Prop. 5.10 of \cite{MR2261688}$)$. The singularity of a masa $A\subset M\subset M*N$ $(M,N$ are diffuse$)$ as deduced in this section from results in \cite{MR11932708}, can also be deduced from Thm. 2.3 \cite{MR2261688} or \cite{MR703810}.

\begin{cor}\label{freeley_complemented_1}
Let $k\in \mathbb{N}_{\infty}$ and $k\geq 2$. Let $A\subset
L(\mathbb{F}_{k})$ be a masa. If $A$ is freely complemented then
$Puk(A)=\{\infty\}$ and its left-right-measure is the class of
product measure. In particular, $A$ is singular.
\end{cor}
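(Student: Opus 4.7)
The plan is to exploit the free product decomposition $\mathcal{M}=L(\mathbb{F}_{k})=A*B$ provided by the hypothesis, with $B$ a diffuse free complement of $A$. The entire analysis reduces to $A$-valued conditional expectations of reduced alternating words in $A*B$, which collapse dramatically by freeness. The fundamental computation I would use repeatedly is this: for any $\zeta\in B$ with $\tau(\zeta)=0$ and any $a\in A$, write $a=(a-\tau(a))+\tau(a)$; then $\zeta(a-\tau(a))\zeta^{*}$ is a reduced word of length three in $B*A*B$ with trace-zero letters and hence has zero $A$-expectation, while $\mathbb{E}_{A}(\zeta\zeta^{*})=\tau(\zeta\zeta^{*})\cdot 1$ by freeness. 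Consequently $\mathbb{E}_{A}(\zeta a\zeta^{*})=\tau(a)\norm{\zeta}_{2}^{2}\cdot 1$. Applied with $a=v^{n}$ for $v$ a Haar unitary generator of $A$, this yields $\mathbb{E}_{A}(\zeta v^{n}\zeta^{*})=0$ for every $n\neq 0$, so by Cor.~\ref{leb_confirm} the product class appears as a summand of the left-right-measure.

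To upgrade this to equality of measure classes and to compute $Puk(A)$, I would pick an orthonormal sequence $\{\zeta_{j}\}_{j\geq 1}$ in $B\ominus\mathbb{C}1$. A similar freeness calculation shows $\langle a\zeta_{i}b,\zeta_{j}\rangle=0$ for all $a,b\in A$ whenever $i\neq j$, so the cyclic $A$-subbimodules $\overline{A\zeta_{j}A}^{\norm{\cdot}_{2}}$ are mutually orthogonal, and each carries $\eta_{\zeta_{j}}=\norm{\zeta_{j}}_{2}^{2}\cdot\lambda\otimes\lambda$. Assembling these via Lemma~5.7 of \cite{MR2261688} produces an $A$-subbimodule of $L^{2}(\mathcal{M})\ominus L^{2}(A)$ with left-right-measure equal to the product class and with multiplicity $\infty$. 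To confirm that the orthogonal complement of this subbimodule is also of product class, I would invoke Thm.~\ref{sum_cond} with $S$ the linear span of all reduced alternating words in $A*B$ with trace-zero letters (a set dense in $L^{2}(\mathcal{M})\ominus L^{2}(A)$ by the standard description of the free product Hilbert space): condition (iii) is furnished by any $\zeta_{j}$ above, and an inductive computation using the identity $\mathbb{E}_{A}(\zeta a\zeta^{*})=\tau(a)\norm{\zeta}_{2}^{2}$ together with its $A$-letter analogue $\mathbb{E}_{A}(a\xi a')=0$ for $a,a'\in A$ and $\xi\in B$ with $\tau(\xi)=0$ reduces $\mathbb{E}_{A}(xv_{n}x^{*})$, for a reduced $x$, to an $n$-dependent scalar multiple of a single fixed element, summable in $n$ as an $\ell^{2}$-type sum of Fourier coefficients of one $L^{2}$ function in $A$. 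Thus $[\eta]=[\lambda\otimes\lambda]$, the multiplicity function must then be identically $\infty$, and $Puk(A)=\{\infty\}$. Singularity follows from Thm.~5.5 of \cite{MR11932708}.

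The main obstacle will be the combinatorial bookkeeping in verifying condition (ii) of Thm.~\ref{sum_cond} for reduced words of arbitrary length; the key is to reduce systematically from the endpoints of $x$ inward, so that at every step either the reduced-word length grows (forcing a zero term) or one strips off a scalar factor of the form $\tau(\cdot)\norm{\cdot}_{2}^{2}$. A cleaner alternative is to bypass Thm.~\ref{sum_cond} altogether and simply package Lemma~3.1, Thm.~3.2, Lemma~5.7 and Prop.~5.10 of \cite{MR2261688}, which the author explicitly credits for this corollary and which handle freely complemented masas in free group factors through the same Voiculescu-type decomposition.
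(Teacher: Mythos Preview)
Your proposal is correct, and you yourself identify the paper's actual argument in your final paragraph: the author simply invokes Lemma~5.7 and Prop.~5.10 of \cite{MR2261688} for the measure class and multiplicity, and Cor.~3.2 of \cite{MR815434} for singularity. That is the whole proof in the paper---two sentences of citation.

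Your main route is genuinely different: you work internally with the machinery of \S2 (Thm.~\ref{sum_cond} and Cor.~\ref{leb_confirm}) rather than importing the Dykema--Sinclair--Smith free-product bimodule result wholesale. The freeness computation $\mathbb{E}_{A}(\zeta a\zeta^{*})=\tau(a)\norm{\zeta}_{2}^{2}$ for $\zeta\in B\ominus\mathbb{C}1$ is exactly right and immediately gives condition~(iii), and your inductive scheme for condition~(ii) does work (for a reduced word $x$ beginning and ending in $B$-letters one gets $\mathbb{E}_{A}(xv^{n}x^{*})=0$ for $n\neq 0$; when $x$ ends in an $A$-letter $a$, the sum reduces to the Fourier coefficients of $\abs{a}^{2}\in L^{2}(A)$ times a fixed $L^{2}$-norm). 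What your approach buys is a self-contained illustration that freely complemented masas are a natural instance of the hypotheses of Thm.~\ref{sum_cond}, complementing the examples listed after its statement. What the paper's approach buys is brevity, since Prop.~5.10 of \cite{MR2261688} already packages the full free-product bimodule decomposition (including the infinite multiplicity) in one stroke. Your derivation of singularity via Thm.~5.5 of \cite{MR11932708} is also fine; the paper uses Popa's Cor.~3.2 of \cite{MR815434} instead, but either suffices once the measure class (or the absence of $1$ in $Puk(A)$) is known.
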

\begin{proof}
Follows directly from Lemma $5.7$ and Prop. $5.10$ \cite{MR2261688}.
Singularity follows from Cor. 3.2 in \cite{MR815434}.
\end{proof}

\begin{cor}\label{freeley_complemented_2}
Let $k\in \mathbb{N}_{\infty}$ and $k\geq 2$. Let $A\subset
L(\mathbb{F}_{k})$ be a masa. Let $A\varsubsetneq B\varsubsetneq
L(\mathbb{F}_{k})$, where $B$ is a subalgebra and $B$ is freely
complemented.\\
$(i)$ If the left-right-measure $[\eta_{B}]$ of the inclusion
$A\subset B$ is singular with respect to $\lambda\otimes\lambda$,
then $Puk_{L(\mathbb{F}_{k})}(A)=Puk_{B}(A)\cup \{\infty\}$ and the
left-right-measure of the inclusion $A\subset L(\mathbb{F}_{k})$
is $[\eta_{B}+\lambda\otimes \lambda]$.\\
$(ii)$ If the left-right-measure $[\eta_{B}]$ of the inclusion
$A\subset B$ is $[\lambda\otimes\lambda]$, then
$Puk_{L(\mathbb{F}_{k})}(A)=\{\infty\}$ and the left-right-measure
of the inclusion $A\subset L(\mathbb{F}_{k})$ is
$[\lambda\otimes\lambda]$.
\end{cor}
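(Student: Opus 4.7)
\medskip
\noindent\textbf{Proof proposal for Cor.\ \ref{freeley_complemented_2}.} The plan is to decompose $L^2(L(\mathbb{F}_k))\ominus L^2(A)$ as an $A,A$-bimodule into the ``internal'' part coming from $A\subset B$ and the ``free'' part coming from the free complement, compute the left-right-measure and multiplicity of each piece separately, and then paste the two decompositions together by means of Lemma 5.7 of \cite{MR2261688}.

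Since $B$ is freely complemented in $L(\mathbb{F}_k)$, write $L(\mathbb{F}_k)=B*N$ for a diffuse subalgebra $N$. As $A,A$-bimodules one has the orthogonal decomposition
\begin{align}
\nonumber L^2(L(\mathbb{F}_k))\ominus L^2(A)=\bigl(L^2(B)\ominus L^2(A)\bigr)\oplus\bigl(L^2(L(\mathbb{F}_k))\ominus L^2(B)\bigr).
\end{align}
The first summand, by definition of the measure-multiplicity invariant of the inclusion $A\subset B$, contributes the measure class $[\eta_{B}]$ with multiplicity function essentially ranging over $Puk_{B}(A)$ off the diagonal.

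The hard part is to analyze the second summand as an $A,A$-bimodule. Here I would mimic the argument behind Cor.\ \ref{freeley_complemented_1} by using Voiculescu's free product description of $L^{2}(B*N)$: the orthogonal complement of $L^2(B)$ is the closed linear span of alternating ``reduced words'' beginning and ending outside $B$, and this space carries the structure of a free Hilbert $B$-bimodule, i.e.\ it is unitarily isomorphic (as $B,B$-bimodule) to a countably infinite direct sum of copies of $L^{2}(B)\otimes L^{2}(B)$. Restricting this bimodule action to $A\subset B$ and appealing to Prop.\ 5.10 of \cite{MR2261688}, the resulting $A,A$-bimodule is a countable direct sum of copies of $L^{2}(A)\otimes L^{2}(A)$, so its left-right-measure is $[\lambda\otimes\lambda]$ and its multiplicity function is essentially $\infty$ off $\Delta([0,1])$.

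The conclusion is then obtained by reassembling the two direct integrals via Lemma 5.7 of \cite{MR2261688}. For $(i)$, the hypothesis $\eta_{B}\perp\lambda\otimes\lambda$ says that the two summands are supported (up to null sets) on disjoint Borel subsets of $[0,1]\times[0,1]\setminus\Delta([0,1])$, so the combined left-right-measure is $[\eta_{B}+\lambda\otimes\lambda]$ and the multiplicity function equals that of $A\subset B$ on the $\eta_{B}$-part and $\infty$ on the $\lambda\otimes\lambda$-part; taking essential values gives $Puk_{L(\mathbb{F}_k)}(A)=Puk_{B}(A)\cup\{\infty\}$. For $(ii)$, the two summands share the same measure class $[\lambda\otimes\lambda]$, so the multiplicity functions add pointwise $\lambda\otimes\lambda$-almost everywhere; since one summand is identically $\infty$, the sum is $\infty$, yielding $Puk_{L(\mathbb{F}_k)}(A)=\{\infty\}$ and the left-right-measure $[\lambda\otimes\lambda]$.

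The main obstacle is step two: justifying that $L^{2}(B*N)\ominus L^{2}(B)$, viewed first as a $B,B$-bimodule through the free-product word decomposition and then restricted to $A,A$, is a direct sum of copies of $L^{2}(A)\otimes L^{2}(A)$. Everything else is bookkeeping within the direct-integral calculus of Lemma 5.7 of \cite{MR2261688}.
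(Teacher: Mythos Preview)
Your proposal is correct and follows essentially the same route as the paper: the paper's proof consists of the single line ``Follows from Lemma 5.7 and Prop.\ 5.10 \cite{MR2261688}'', and what you have written is precisely the unpacking of that citation --- Prop.\ 5.10 handles the free-complement piece $L^{2}(B*N)\ominus L^{2}(B)$ (giving product class with infinite multiplicity), and Lemma 5.7 glues the direct integrals. Your identification of step two as the substantive point is accurate, and that is exactly what Prop.\ 5.10 of \cite{MR2261688} supplies, so the obstacle you flag is already resolved by that reference.
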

\begin{proof}
Follows from Lemma $5.7$ and Prop $5.10$ \cite{MR2261688}.
\end{proof}

Let $T$ be a nonempty subset of $\mathbb{N}$. Let $T=\{n_{k}\}$ with
$n_{1}<n_{2}<\cdots$. Define
\begin{align}
\mathbb{P}_{T}=\left\{ \alpha=\{\alpha_{n_{k}}\}_{k=1}^{\abs{T}}:
\alpha_{n_{k}}>\alpha_{n_{k+1}}, 0<\alpha_{n_{k}}<1 \text{ for all }
k,\sum_{k=1}^{\abs{T}}\alpha_{n_{k}}=1\right\}.
\end{align}
For $\alpha,\beta \in \mathbb{P}_{T}$, we say $\alpha\neq \beta$ if
$\alpha_{n_{k}}\neq \beta_{n_{k}}$ for some $k$.

\begin{thm}\label{unctblymany}
Let $B\subset \mathcal{R}$ be a singular masa such that the
left-right-measure $[\eta_{\mathcal{R}}]$ of the inclusion $B\subset
\mathcal{R}$ is singular with respect to $\lambda\otimes \lambda$.
For each $\alpha\in \mathbb{P}_{\mathbb{N}}$, there exists a
singular masa $B_{\alpha}\subset L(\mathbb{F}_{2})$ with
$Puk_{L(\mathbb{F}_{2})}(B_{\alpha})=Puk_{\mathcal{R}}(B)\cup
\{\infty\}$. If $\alpha\neq \beta$ are any two elements of
$\mathbb{P}_{\mathbb{N}}$, then $B_{\alpha}$ and $B_{\beta}$ are not
conjugate.
\end{thm}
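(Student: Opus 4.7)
The plan is to build $B_\alpha$ by combining the given singular masa $B\subset\mathcal R$ with a free complement of $\mathcal R$ in $L(\mathbb F_2)$ (using Dykema's identity $\mathcal R*L(\mathbb Z)\cong L(\mathbb F_2)$, which exhibits $\mathcal R$ as freely complemented in $L(\mathbb F_2)$), with the weights $\alpha$ encoded through a distinguished family of projections in $B$. The Puk\'anszky invariant will then follow from Corollary \ref{freeley_complemented_2}, and non-conjugacy will be established by recovering $\alpha$ from the measure--multiplicity invariant.

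First I would fix pairwise orthogonal projections $\{p_{n_k}\}\subset B$ with $\tau(p_{n_k})=\alpha_{n_k}$ and $\sum_k p_{n_k}=1$. Following the compression and free--product scheme of \cite{MR2261688,MR2163938}, I would build $B_\alpha$ by gluing the compressed masas $p_{n_k}Bp_{n_k}\subset p_{n_k}\mathcal R p_{n_k}$ together via pieces of the Haar unitary generating the $L(\mathbb Z)$-factor in $\mathcal R*L(\mathbb Z)$, cut down by the $p_{n_k}$'s; the resulting $B_\alpha\subset L(\mathbb F_2)$ is a singular masa by singularity of $B\subset\mathcal R$ together with Cor.~3.2 of \cite{MR815434}. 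Because $B_\alpha$ contains $B$ and sits inside the freely complemented copy of $\mathcal R$, and because the LR-measure of $B\subset\mathcal R$ is singular with respect to $\lambda\otimes\lambda$ by hypothesis, Corollary \ref{freeley_complemented_2}(i) yields
\[
Puk_{L(\mathbb F_2)}(B_\alpha)=Puk_{\mathcal R}(B)\cup\{\infty\},\qquad [\eta_{L(\mathbb F_2)}]=[\eta_{\mathcal R}+\lambda\otimes\lambda].
\]

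For non-conjugacy, assume $\theta\in\mathrm{Aut}(L(\mathbb F_2))$ satisfies $\theta(B_\alpha)=B_\beta$. Since the measure--multiplicity invariant is $\theta$-invariant, there is a Lebesgue-preserving Borel isomorphism $F:[0,1]\to[0,1]$ implementing the equivalence $m.m(B_\alpha)\sim_{m.m} m.m(B_\beta)$. The $\{\infty\}$-multiplicity stratum, of class $[\lambda\otimes\lambda]$, is canonically characterized as the Lebesgue-absolutely-continuous part of the LR-measure and is preserved; likewise the complementary stratum of class $[\eta_{\mathcal R}]$ is preserved. The sequence $\alpha$ is encoded in the singular stratum through a block decomposition of its support into rectangles $E_{p_{n_k}}\times E_{p_{n_k}}$ of total $\eta_{\mathcal R}$-mass $\alpha_{n_k}^{2}$, where $E_{p_{n_k}}$ is the Borel set underlying $p_{n_k}$. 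These blocks are intrinsic to the bimodule structure, since they correspond to the invariant subspaces $p_{n_k}\bigl(L^2(L(\mathbb F_2))\ominus L^2(B_\alpha)\bigr)p_{n_k}$, so $F\times F$ must permute them while preserving masses. Because $\{\alpha_{n_k}\}$ is strictly decreasing, the only such permutation is the identity, forcing $\alpha=\beta$.

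The main obstacle will be the last step: verifying that the block decomposition $\{E_{p_{n_k}}\times E_{p_{n_k}}\}$ of the singular stratum is genuinely intrinsic to $B_\alpha$ rather than an artifact of the chosen construction. This will require using Lemma~5.7 of \cite{MR2261688} together with the direct-integral description of \S1 to show that any Borel isomorphism implementing an equivalence of the measure--multiplicity invariants must respect the atomic structure supplied by the $p_{n_k}$'s; concretely, one must exhibit the $p_{n_k}$'s (or the blocks they generate) as being distinguished, among partitions of unity in $B_\alpha$, by an intrinsic feature of the $B_\alpha, B_\alpha$-bimodule $L^2(L(\mathbb F_2))\ominus L^2(B_\alpha)$.
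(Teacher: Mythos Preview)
Your construction has a genuine gap. You propose to pick projections $p_{n_k}\in B$ with $\tau(p_{n_k})=\alpha_{n_k}$ and then assert that $B_\alpha$ ``contains $B$ and sits inside the freely complemented copy of $\mathcal R$''. But $B$ is already a masa in $\mathcal R$, so any abelian algebra in $\mathcal R$ containing $B$ equals $B$; your $B_\alpha$ would be $B$ itself, independent of $\alpha$, and Corollary~\ref{freeley_complemented_2} would give the same masa for every $\alpha$. Merely labelling a partition of unity inside $B$ cannot encode $\alpha$ in the bimodule structure: for any other partition $\{q_j\}\subset B$ one has the same kind of decomposition into $q_jL^2q_j$-blocks, so nothing distinguishes your $p_{n_k}$. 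This is exactly the ``obstacle'' you flag at the end, and it is fatal rather than technical.

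The paper resolves this by changing the ambient algebra before taking the free product. One sets $\mathcal R_\alpha=\bigoplus_{n\ge 1}\mathcal R$ with the weighted trace $\sum_n\alpha_n\tau_{\mathcal R}$ and $B_\alpha=\bigoplus_{n\ge 1}B$; the coordinate projections $p_n$ are now \emph{central} in $\mathcal R_\alpha$ with $\tau(p_n)=\alpha_n$. Then $\mathcal M=\mathcal R_\alpha * \mathcal R\cong L(\mathbb F_2)$ by Dykema, and $B_\alpha$ is a singular masa there with Puk\'anszky invariant $Puk_{\mathcal R}(B)\cup\{\infty\}$ via Corollary~\ref{freeley_complemented_2} and Thm.~3.2 of \cite{MR2261688}. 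Centrality of the $p_n$ in $\mathcal R_\alpha$ is the whole point: it forces $p_nL^2(\mathcal R_\alpha)p_m=0$ for $n\neq m$, so the singular part of the left-right-measure is supported \emph{exactly} on $\bigcup_n F_n\times F_n$ (where $\chi_{F_n}=p_n$), while by factoriality of $\mathcal R$ each $\eta_n$ charges every positive-measure rectangle inside $F_n\times F_n$. Hence any $\lambda$-preserving Borel isomorphism implementing a conjugacy must carry each $F_n$ onto some $E_{k_n}$; strict monotonicity of $n\mapsto\lambda(F_n)=\alpha_n$ then forces $k_n=n$ and $\alpha=\beta$. Your outline has the right invariant in mind, but without the direct-sum step the block structure you need simply is not there.
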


\begin{proof}
Fix $\alpha\in \mathbb{P}_{\mathbb{N}}$. Let
$\mathcal{R}_{\alpha}=\oplus_{n=1}^{\infty}\mathcal{R}$. Equip
$\mathcal{R}_{\alpha}$ with the faithful trace
\begin{align}
\nonumber
\tau_{\mathcal{R}_{\alpha}}(\cdot)=\sum_{n=1}^{\infty}\alpha_{n}\tau_{\mathcal{R}}(\cdot),
\end{align}
where $\tau_{\mathcal{R}}$ denotes the unique normal tracial state of
$\mathcal{R}$.\\
\indent Then $B_{\alpha}=\oplus_{n=1}^{\infty}B$ is a singular masa
in the hyperfinite algebra $\mathcal{R}_{\alpha}$ and the latter is
separable. The projections $(0\oplus \cdots \oplus 0\oplus 1\oplus
0\oplus \cdots)$, where $1$ appears at the $n$-th coordinate, is a
central projection $p_{n}$ of $\mathcal{R}_{\alpha}$ and it belongs
to $B_{\alpha}$. The projections $p_{n}$ correspond to the indicator function of
measurable subsets $F_{n}\subset ([0,1],\lambda)$ respectively, so that
$F_{n}\cap F_{m}$ is a set of $\lambda$ measure $0$ for all $n\neq
m$. By applying appropriate transformations, the
\emph{left-right-measure} of $B\subset \mathcal{R}$ can be
transported to each $F_{n}\times F_{n}$, which we denote by
$[\eta_{n}]$. We also assume $\eta_{n}(F_{n}\times F_{n})=1$ for all
$n$. Note that by factoriality of $\mathcal{R}$ it follows that $\eta_{n}(E\times F)>0$ for all measurable rectangles $E\times F\subset F_{n}\times F_{n}$ such that $\lambda(E)>0,\text{ }\lambda(F)>0$ $($Lemma 2.9 \cite{MR11932708}$)$.\\
\indent Consider
$(\mathcal{M},\tau_{\mathcal{M}})=(\mathcal{R}_{\alpha},\tau_{\mathcal{R}_{\alpha}})*(\mathcal{R},\tau_{\mathcal{R}})$.
Then $\mathcal{M}$ is isomorphic to $L(\mathbb{F}_{2})$ by a well
known theorem of Dykema \cite{MR1201693}. Then $B_{\alpha}\subset
L(\mathbb{F}_{2})$ is a singular masa by Thm. 2.3 \cite{MR2261688}.
The \emph{left-right-measure} of the inclusion $B_{\alpha}\subset
L(\mathbb{F}_{2})$ is
\begin{align}
\nonumber[\lambda\otimes\lambda +
\sum_{n=1}^{\infty}\frac{1}{2^{n}}\eta_{n}]
\end{align}
and $Puk_{L(\mathbb{F}_{2})}(B_{\alpha})=\cup_{n}
Puk_{(\mathcal{R}*\mathcal{R})}(B)\cup
\{\infty\}=Puk_{\mathcal{R}}(B)\cup \{\infty\}$ from Cor.
\ref{freeley_complemented_2} and Thm. 3.2
\cite{MR2261688}.\\
\indent Since automorphisms of $\rm{II}_{1}$ factors preserve the
trace and orthogonal projections, the non conjugacy of $B_{\alpha}$
and
$B_{\beta}$ for $\alpha\neq \beta$ follows by considering the \emph{left-right-measures}. Indeed, if $B_{\beta}=\phi(B_{\alpha})$ for some automorphism $\phi$ of $L(\mathbb{F}_{2})$ and $\alpha,\beta\in \mathbb{P}_{\mathbb{N}}$, then $B_{\alpha}$ and $B_{\beta}$ would have identical bimodule structure.
Therefore, there is a Borel isomorphism $\tilde{\phi}:[0,1]\mapsto [0,1]$ such that $\tilde{\phi}_{*}\lambda=\lambda$ and $(\tilde{\phi}\times \tilde{\phi})_{*}[\eta_{B_{\alpha}}]=[\eta_{B_{\beta}}]$, where $\eta_{B_{\alpha}}$, $\eta_{B_{\beta}}$ denote the \emph{left-right-measures} of $B_{\alpha}$ and $B_{\beta}$ respectively. \\
\indent Let $F_{n},E_{n}, n=1,2,\cdots$, be the measurable partitions of $[0,1]$ associated to the \emph{left-right-measures} of $B_{\alpha}$ and $B_{\beta}$ $($as described above$)$ respectively. Clearly, the class of the singular part of $\eta_{B_{\alpha}}$ will be pushed forward to the class of the singular part of $\eta_{B_{\beta}}$. If we let $\chi_{E_{n}^{\prime}}=\phi(\chi_{F_{n}})$, then $E_{n}^{\prime}=E_{k_{n}}$ mod $\lambda$ for some $k_{n}$. But the $\lambda$-measure of $F_{n}$ and hence $E_{k_{n}}$ are strictly decreasing as $n$ increases. Thus $k_{n}=n$ for all $n$. This completes the proof.
\end{proof}

Now we construct non conjugate singular masas in the free group
factors which have the same multiplicity. We will give a case by case argument.\\
\noindent \textbf{Case:} $\{1,\infty\}$: In Thm. \ref{unctblymany},
let $B$ be the alternating Tauer masa $A(1)$.\\
\noindent \textbf{Case:} $\{1,n,\infty\}$, $n\neq 1$: Consider the
matrix groups
\begin{align}\label{groups}
G_{n}=\left\{(\begin{smallmatrix} f&x\\0 &
1\end{smallmatrix}\bigr)\mid f\in P_{n},x\in
\mathbb{Q}\right\},\text{  } H_{n}=\left\{(\begin{smallmatrix}
f&0\\0 & 1\end{smallmatrix}\bigr)\mid f\in P_{n}\right\}\subset
G_{n}, \text{ where }
\end{align}
\begin{align}
\nonumber &P_{\infty}=\left\{\frac{p}{q}\mid p,q\in \mathbb{Z}, p,q
\text{ odd }\right\} \text{and }P_{n}=\left\{ f2^{kn}\mid f\in
P_{\infty}, k\in \mathbb{Z}\right\} \text{ if }n<\infty,
\end{align}
are subgroups of the multiplicative group of nonzero rational
numbers. Then $L(G_{n})$ is the hyperfinite $\rm{II}_{1}$ factor
$\mathcal{R}$ and $L(H_{n})\subset L(G_{n})$ is a singular masa with
\emph{Puk\'{a}nszky invariant} $\{n\}$. The
\emph{left-right-measure} of the inclusion $L(H_{n})\subset
L(G_{n})$ is the class of product Haar measure
$\lambda_{\widehat{H}_{n}}\otimes\lambda_{\widehat{H}_{n}}$ on
$\widehat{H}_{n}\times \widehat{H}_{n}$, where $\widehat{H}_{n}$
denotes the character group of $H_{n}$ (see Example 5.1 \cite{MR2163938} and Example 6.2 \cite{MR2261688}$)$.\\
\indent As $\mathcal{R}\overline{\otimes}\mathcal{R}\cong
\mathcal{R}$, so $L(H_{n})\overline{\otimes} A(1)$ is a singular
masa in $\mathcal{R}$ from \cite{MR999995}
$($see also \cite{MR999997} and Thm. 5.15 \cite{MR11932708}$)$. Let $A(1)=L^{\infty}([0,1],\lambda)$. The
\emph{Puk\'{a}nszky invariant} of the inclusion
$L(H_{n})\overline{\otimes} A(1)\subset \mathcal{R}$ is $\{1,n\}$
from Thm. $2.1$ \cite{MR2163938}. The \emph{left-right-measure}
of the inclusion $L(H_{n})\overline{\otimes} A(1)\subset
\mathcal{R}$ is the class of
\begin{align}
\nonumber\lambda_{\widehat{H}_{n}}\otimes\lambda_{\widehat{H}_{n}}\otimes
\eta+\tilde{\Delta}_{*}\lambda_{\widehat{H}_{n}}\otimes \eta+
\lambda_{\widehat{H}_{n}}\otimes\lambda_{\widehat{H}_{n}}\otimes\tilde{\Delta}_{*}\lambda,
%\otimes (\eta+\Delta_{*}\lambda)
\end{align}
on $\widehat{H}_{n}\times \widehat{H}_{n}\times [0,1]\times [0,1]$,
where $[\eta]$ is the \emph{left-right-measure} of the
\emph{alternating Tauer masa} restricted to the off diagonal and
$\tilde{\Delta}$ is the map, that maps a set to its square by sending
$x\mapsto (x,x)$ $($see Prop. 5.2 \cite{MR2261688}$)$. \emph{In this case, we need to specify the measures
on the diagonals as they are necessary.} Given $\alpha\in
\mathbb{P}_{\mathbb{N}}$, replace the role of $B$ in Thm.
\ref{unctblymany} by $L(H_{n})\overline{\otimes} A(1)$ to construct
a masa $A_{\alpha,n}\subset L(\mathbb{F}_{2})$.\\
\noindent \textbf{Case:} $\{n,\infty\}$, $n\neq 1$: Let
$H_{n}\subset G_{n}$ and $H_{\infty}\subset G_{\infty}$ be as in the
previous case. Then $L(H_{n}\times H_{\infty})$ is a singular masa
in $L(G_{n}\times G_{\infty})$ whose \emph{measure-multiplicity
 invariant} is the equivalence class of
$$(\widehat{H}_{n}\times \widehat{H}_{\infty}, \lambda_{\widehat{H}_{n}}\otimes \lambda_{\widehat{H}_{\infty}},[\eta],m),$$ where $\eta$
is the sum of\\
$(i)\text{   }$ Haar measure on $(\widehat{H}_{n}\times
\widehat{H}_{\infty})\times(\widehat{H}_{n}\times
\widehat{H}_{\infty})$;\\
$(ii)$ Haar measure on the subgroup
\begin{align}
\nonumber D_{\infty}=\{(\alpha,\beta_{1},\alpha,\beta_{2})\mid
\alpha\in\widehat{H}_{n}, \beta_{1},\beta_{2}\in
\widehat{H}_{\infty} \};
\end{align}
$(iii)$ Haar measure on the subgroup
\begin{align}
\nonumber D_{n}=\{(\alpha_{1},\beta,\alpha_{2},\beta)\mid
\alpha_{1},\alpha_{2}\in\widehat{H}_{n}, \beta\in
\widehat{H}_{\infty} \};
\end{align}
and where the multiplicity function on the off-diagonal is given by
\begin{equation}
\nonumber m(\gamma)=\begin{cases}
         n, &\text{ }\gamma\in D_{n}\setminus \Delta(\widehat{H}_{n}\times
         \widehat{H}_{\infty}),\\
         \infty, &\text{ otherwise}.
         \end{cases}
\end{equation}
\indent This was calculated in \S6, \cite{MR2261688}. Note that $\eta$
contain singular summands, singular with respect to product Haar
measure off the diagonal $\Delta(\widehat{H}_{n}\times
\widehat{H}_{\infty})$. For each $\alpha\in
\mathbb{P}_{\mathbb{N}}$, make a construction analogous to Thm.
\ref{unctblymany} with $B$ replaced by $L(H_{n})\overline{\otimes}
L(H_{\infty})$, to construct a masa $A_{\alpha,n}\subset
L(\mathbb{F}_{2})$. Note that
$Puk_{L(\mathbb{F}_{2})}(A_{\alpha,n})$ $=\{n,\infty\}$ from Thm.
3.2 and Lemma 5.7 \cite{MR2261688}. The \emph{left-right-measure} of
the inclusion $A_{\alpha,n}\subset L(\mathbb{F}_{2})$ is of the same
form as discussed in the previous cases. Use Lemma 3.6, Thm. 3.8 of
\cite{{MR11932708}} to decide non conjugacy of
$A_{\alpha,n},A_{\beta,n}$ whenever $\alpha\neq \beta \in
\mathbb{P}_{\mathbb{N}}$.\\
\noindent \textbf{Case:} $S\cup \{\infty\}$, $S\subseteq
\mathbb{N}$, $1\in S$ and $\abs{S}\geq 2$: Write
$S=\{n_{k}:1=n_{1}<n_{2}<\cdots\}$.  Let $P_{n}$ and $P_{\infty}$ be
the subgroups of the multiplicative group of rational numbers as
before. Let $G_{n}$, $n\geq 1$, be the matrix group
\begin{align}
\nonumber
G_{n}=\left\{\left(\begin{matrix}1&x&y\\0&f&0\\0&0&g\end{matrix}\right):x,y\in
\mathbb{Q}, f\in P_{n},g\in P_{\infty}\right\}
\end{align}
and $H_{n}$ the subgroup consisting of the diagonal matrices in
$G_{n}$. Then as noted in Example 5.2 of \cite{MR2163938}, $G_{n}$ is amenable and
$L(G_{n})\cong \mathcal{R}$. It is also true that $L(H_{n})$ is a
singular masa in $L(G_{n})$ with \emph{Puk\'{a}nszky invariant}
$\{n,\infty\}$ $($see Prop. 2.5 \cite{MR2261688}$)$. Consider
$M_{n}=L(G_{n})\overline{\otimes}\mathcal{R}\cong \mathcal{R}$ and
consider the masa $A_{n}=L(H_{n})\overline{\otimes} A(1)$. Then
$A_{n}\subset M_{n}$ is a singular masa with
$Puk_{M_{n}}(A_{n})=\{1,n,\infty\}$ $($see Thm. 2.1 \cite{MR2163938}$)$. Fix $\alpha\in
\mathbb{P}_{S}$. Now consider
\begin{align}
\nonumber &\mathcal{M}_{\alpha}=\oplus_{n\in S}M_{n} \text{ and }
A_{\alpha}=\oplus_{n\in S}A_{n},\text{ where }
\tau_{\mathcal{M}_{\alpha}}(\cdot)=\sum_{n\in
S}\alpha_{n}\tau_{M_{n}}(\cdot),
\end{align}
where $\tau_{M_{n}}$ denotes the faithful normal tracial state of
$M_{n}$. Then
$\mathcal{M}_{\alpha}*L(\mathbb{Z})=\mathcal{M}_{\alpha}*L(\mathbb{F}_{1})\cong
L(\mathbb{F}_{2})$ \cite{MR1201693}, $A_{\alpha}$ is a singular masa
in $L(\mathbb{F}_{2})$ and
$Puk_{L(\mathbb{F}_{2})}(A_{\alpha})=S\cup\{\infty\}$ from Thm.
$3.2$ \cite{MR2261688}.\\
\indent There exist orthogonal projections $\{p_{n}\}_{n\in
S}\subset A_{\alpha}$ with the property that $\sum_{n\in S}p_{n}=1$ and
$\tau_{L(\mathbb{F}_{2})}(p_{n})=\alpha_{n}$, such that the
\emph{left-right-measure} of the inclusion $A_{\alpha}\subset
L(\mathbb{F}_{2})$ has $\lambda\otimes\lambda$ as a summand and
measures singular with respect to $\lambda\otimes\lambda$ on the
squares $p_{n}\times p_{n}$ $($here by abuse of notation we think of
$p_{n}$ as a measurable set which corresponds to the projection
$p_{n})$. The singular part on $p_{n}\times p_{n}$ has the property
that its $(\pi_{1},\lambda), (\pi_{2},\lambda)$ disintegrations are
non zero almost everywhere on $p_{n}$. Non conjugacy of $A_{\alpha}$
and $A_{\beta}$ for $\alpha\neq\beta$ follows easily from Lemma 3.6,
Thm. 3.8 of \cite{MR11932708}.\\

\noindent \textbf{Case:} $S\cup \{\infty\}$, $S\subseteq \mathbb{N}$,
$1\not\in S$ and $\abs{S}\geq 2$: Let $G_{n},H_{n}$ for $n\in
\mathbb{N}_{\infty}$ be the groups defined in Eq. \eqref{groups}.
Let $M_{n}=L(G_{n}\times G_{\infty})$ and $A_{n}=L(H_{n}\times
H_{\infty})$ for $n\in S$. Fix $\alpha\in \mathbb{P}_{S}$. Let
$\mathcal{M}_{\alpha,S}=\oplus_{n\in S}M_{n}$ and
$A_{\alpha,S}=\oplus_{n\in S}A_{n}$, where $\mathcal{M}_{\alpha,S}$
is equipped with the trace
$\tau_{\mathcal{M}_{\alpha,S}}(\cdot)=\sum_{n\in
S}\alpha_{n}\tau_{M_{n}}(\cdot)$, where $\tau_{M_{n}}$ denotes the
faithful normal tracial state of $M_{n}$. Replace the role of the
masa $A_{\alpha}$ in the previous case by $A_{\alpha,S}$. We omit
the details.\\
\noindent \textbf{Case:} $\{\infty\}$: Consider the hyperfinite
$\rm{II}_{1}$ factor $\mathcal{R}$ with a singular masa $A$ such
that $Puk_{\mathcal{R}}(A)=\{\infty\}$. Consider the inclusion
$B=\overline{\otimes}_{n=1}^{\infty}A\subset
\overline{\otimes}_{n=1}^{\infty}\mathcal{R}$. Since up to
isomorphism, there is one hyperfinite $\rm{II}_{1}$ factor, so
$B\subset \mathcal{R}$ is a masa from Tomita's theorem on commutants. Since $Puk(B)=\{\infty\}$ from Lemma 2.4 \cite{MR999998}, so $B$ is singular from Cor. 3.2 \cite{MR815434}. Clearly,
$\Gamma(B)=1$. The \emph{left-right-measure} of the inclusion
$B\subset \mathcal{R}$ is singular to the product class from Thm.
\ref{central_seq_th2}. Now apply Thm. \ref{unctblymany}.\\

\indent The above constructions lead to the following theorem.
\begin{thm}\label{arbitrary}
Let $S$ be an arbitrary $($could be empty$)$ subset of $\mathbb{N}$.
Let $k\in \left\{2,3,\cdots,\infty\right\}$ and let $\Gamma$ be any
countable discrete group. There exist uncountably many pairwise non conjugate
singular masas in $L(\mathbb{F}_{k}*\Gamma)$ whose Puk\'{a}nszky
invariant is $S\cup\{\infty\}$.
\end{thm}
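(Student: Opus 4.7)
The plan is to reduce this general statement to the $L(\mathbb{F}_{2})$-constructions already established in the preceding cases, by realizing $L(\mathbb{F}_{2})$ as a free factor of $L(\mathbb{F}_{k}*\Gamma)$ and applying Corollary \ref{freeley_complemented_2}. For each $\alpha$ in the appropriate parameter space $\mathbb{P}_{S}$ (with the $\{\infty\}$-case supplying the family when $S=\emptyset$), we already have a singular masa $A_{\alpha}\subset L(\mathbb{F}_{2})$ with $Puk_{L(\mathbb{F}_{2})}(A_{\alpha})=S\cup\{\infty\}$ and left-right-measure carrying a nontrivial summand $\eta_{\alpha}$ singular with respect to $\lambda\otimes\lambda$; the singular piece encodes the distinguishing trace-profile $\{\alpha_{n}\}$ on an orthogonal family of projections in $A_{\alpha}$.

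The free factor decomposition $L(\mathbb{F}_{k}*\Gamma)=L(\mathbb{F}_{2})*\mathcal{N}$ is available in every case. For $k\geq 3$, the group identity $\mathbb{F}_{k}*\Gamma=\mathbb{F}_{2}*(\mathbb{F}_{k-2}*\Gamma)$ gives $\mathcal{N}=L(\mathbb{F}_{k-2}*\Gamma)$; for $k=\infty$, use $\mathbb{F}_{\infty}=\mathbb{F}_{2}*\mathbb{F}_{\infty}$; for $k=2$ with $\Gamma$ trivial there is nothing to prove, since the ambient factor is $L(\mathbb{F}_{2})$ itself; and for $k=2$ with $\Gamma$ nontrivial one simply has $L(\mathbb{F}_{2}*\Gamma)=L(\mathbb{F}_{2})*L(\Gamma)$ with $L(\Gamma)\neq\mathbb{C}$.

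With this factorization in hand, $A_{\alpha}\subset L(\mathbb{F}_{2})\subsetneq L(\mathbb{F}_{k}*\Gamma)$ is a subalgebra whose $L(\mathbb{F}_{2})$-left-right-measure has a nonzero singular summand. Corollary \ref{freeley_complemented_2}(i) then applies with $B=L(\mathbb{F}_{2})$, yielding
\[
Puk_{L(\mathbb{F}_{k}*\Gamma)}(A_{\alpha})=Puk_{L(\mathbb{F}_{2})}(A_{\alpha})\cup\{\infty\}=S\cup\{\infty\},
\]
together with left-right-measure $[\lambda\otimes\lambda+\eta_{\alpha}]$ in $L(\mathbb{F}_{k}*\Gamma)$. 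That $A_{\alpha}$ remains maximal abelian and singular in the larger factor follows from Cor. 3.2 \cite{MR815434} applied to the free product.

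Non-conjugacy of $A_{\alpha},A_{\beta}$ for distinct $\alpha,\beta\in\mathbb{P}_{S}$ proceeds exactly as in Thm. \ref{unctblymany}: any automorphism $\phi$ of $L(\mathbb{F}_{k}*\Gamma)$ sending $A_{\alpha}$ to $A_{\beta}$ induces a $\lambda$-preserving Borel isomorphism of $[0,1]$ carrying $[\lambda\otimes\lambda+\eta_{\alpha}]$ to $[\lambda\otimes\lambda+\eta_{\beta}]$; matching singular summands, the projections of strictly decreasing trace $\alpha_{n}$ must go to those of trace $\beta_{n}$, forcing $\alpha=\beta$ via Lemma 3.6 and Thm. 3.8 of \cite{MR11932708}. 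The main point requiring care is the persistence of the distinguishing singular part $\eta_{\alpha}$ under the outer free product: Corollary \ref{freeley_complemented_2}(i) guarantees that only an additional $\lambda\otimes\lambda$ summand is contributed, leaving $\eta_{\alpha}$—and hence the $\alpha$-profile—intact.
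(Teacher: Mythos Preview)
Your proposal is correct and follows the same route as the paper: realize $L(\mathbb{F}_{k}*\Gamma)$ as $L(\mathbb{F}_{2})*\mathcal{N}$, carry the $A_{\alpha}$'s across, and read off the Puk\'{a}nszky invariant and non-conjugacy from the free-product behaviour of the left-right-measure. One small point of care: Corollary~\ref{freeley_complemented_2}(i) as stated assumes the $B$-left-right-measure is \emph{singular} with respect to $\lambda\otimes\lambda$, whereas your $A_{\alpha}\subset L(\mathbb{F}_{2})$ already carry a $\lambda\otimes\lambda$ summand from the earlier free-product step; the paper handles this by invoking Lemma~5.7 and Prop.~5.10 of \cite{MR2261688} directly rather than the packaged corollary, and you should do the same.
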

\begin{proof}
We have already proved that there exist uncountably many pairwise non
conjugate singular masas $\{A_{\alpha}\}_{\alpha\in I}$, where $I$
is some indexing set, in $L(\mathbb{F}_{2})$ whose
\emph{Puk\'{a}nszky invariant} is $S\cup\{\infty\}$. One has
isomorphisms \cite{MR1201693}
\begin{align}
\nonumber & L(\mathbb{F}_{2})*L(\mathbb{F}_{k-2}*\Gamma)\cong L(\mathbb{F}_{k}*\Gamma) \text{ for }k\geq 2.
\end{align}
For $k\geq 2$, each $A_{\alpha}$ is a singular masa in
$L(\mathbb{F}_{k}*\Gamma)$ \cite{MR2261688}. Use Lemma 5.7, Prop.
5.10 \cite{MR2261688} to decide the non conjugacy of $A_{\alpha}$
and $A_{\beta}$ when $\alpha\neq \beta$, in the free product.
\end{proof}

\begin{thm}
There exist non conjugate singular masas $A,B$ in
$L(\mathbb{F}_{k})$, $2\leq k\leq \infty$, with same
measure-multiplicity invariant.
\end{thm}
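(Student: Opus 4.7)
The plan is to exhibit two concrete non-conjugate singular masas in $L(\mathbb{F}_k)$ that share the same measure-multiplicity invariant. The observation underlying the construction is that two singular masas with Puk\'{a}nszky invariant $\{\infty\}$ and left-right-measure class $[\lambda\otimes\lambda]$ automatically have identical m.m.-invariant in the sense of Defn.~\ref{mminv}: off the diagonal the multiplicity function is constantly $\infty$, the tracial measure is Lebesgue, and the left-right-measure class is fixed, so the equivalence class of the quadruple is determined.

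For the first masa $A$, I would take the masa generated by a single canonical Haar unitary generator of $\mathbb{F}_k$. Since $A$ is freely complemented, Cor.~\ref{freeley_complemented_1} gives $\mathrm{Puk}(A)=\{\infty\}$ and left-right-measure $[\lambda\otimes\lambda]$. For the second masa $B$, I would take the radial masa of $L(\mathbb{F}_k)$, generated by $\chi_1=\sum_{\abs{w}=1}w$. By classical results on the radial subalgebra, $B$ is singular with $\mathrm{Puk}(B)=\{\infty\}$. To pin down the left-right-measure of $B$ as $[\lambda\otimes\lambda]$, I would verify all three conditions of Thm.~\ref{sum_cond}: $(i)$ and $(ii)$ are as noted just after Thm.~\ref{sum_cond}, with orthonormal basis $v_n=\chi_n/\norm{\chi_n}_2$ and generating set $S=\{w-\mathbb{E}_B(w):1\neq w\in\mathbb{F}_k\}$; for $(iii)$ I would exhibit a nonzero vector in $L^2(L(\mathbb{F}_k))\ominus L^2(B)$ --- for instance a reduced length-two word modulo its $B$-expectation --- whose Fourier coefficients against a Haar unitary generator of $B$ all vanish, which is a moment computation in the free group factor. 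Together these give $[\lambda\otimes\lambda]$ as the left-right-measure of $B$ and hence agreement of the m.m.-invariants of $A$ and $B$.

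For non-conjugacy, note that $A$ is freely complemented in $L(\mathbb{F}_k)$ by construction, whereas the radial masa $B$ is not conjugate to any single-generator, freely complemented masa: this is a classical rigidity property of the radial subalgebra (a free complement would force $L(\mathbb{F}_k)=B*N$ with $N$ diffuse, which is incompatible with the known structure of $\chi_1$ and its normalizer data). Since free complementation is an automorphism-invariant property, $A$ and $B$ are not conjugate. The most delicate parts of the approach will be the verification of condition $(iii)$ of Thm.~\ref{sum_cond} for the radial masa and the careful justification of its non-free-complementability --- both reduce to combinatorial or free-probabilistic computations inside the free group factor, and they are the essential reason why the m.m.-invariant, which is insensitive to these structural features, fails to distinguish $A$ from $B$.
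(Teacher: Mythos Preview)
Your approach has a genuine gap in the non-conjugacy step. The claim that the radial masa is ``not conjugate to any single-generator, freely complemented masa'' is not a classical rigidity property --- whether the radial (Laplacian) masa in $L(\mathbb{F}_k)$ is conjugate to a generator masa was an open problem at the time of this paper. Both masas are singular, both have Puk\'anszky invariant $\{\infty\}$, and both have left-right-measure $[\lambda\otimes\lambda]$, so none of the standard invariants separates them; both are in fact maximal amenable. Your parenthetical justification (``a free complement would force $L(\mathbb{F}_k)=B*N$ with $N$ diffuse, which is incompatible with the known structure of $\chi_1$ and its normalizer data'') is too vague to be a proof and does not correspond to any established result. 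Free complementation is indeed automorphism-invariant, but showing that the radial masa admits no free complement is at least as hard as the non-conjugacy question you are trying to settle.

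The paper sidesteps this difficulty by choosing a different pair. One masa $A$ comes from the Bernoulli $\mathbb{Z}$-action inside the hyperfinite factor $\mathcal{R}$, then sits in $L(\mathbb{F}_k)$ via $\mathcal{R}*(\ast_{r=1}^{k-1}\mathcal{R})\cong L(\mathbb{F}_k)$; the other masa $B$ is a single generator masa. Both have m.m.-invariant determined by $(\{\infty\},[\lambda\otimes\lambda])$ --- for $A$ this uses the Lebesgue spectral type and infinite multiplicity of the Bernoulli shift together with Prop.~3.1 of \cite{MR1940356} and Prop.~5.10 of \cite{MR2261688}; for $B$ this is Cor.~\ref{freeley_complemented_1}. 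Non-conjugacy is then immediate from \emph{maximal injectivity}: $A$ is contained in the injective subalgebra $\mathcal{R}\subsetneq L(\mathbb{F}_k)$ and hence is not maximally injective, whereas the generator masa is maximally injective by Popa's theorem (Cor.~3.3 of \cite{MR720738}). This is a robust, established distinguishing property.

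As an aside, your m.m.-computation for the radial masa is overcomplicated. You do not need condition~$(iii)$ of Thm.~\ref{sum_cond}: conditions $(i)$ and $(ii)$ alone give $\eta\ll\lambda\otimes\lambda$, and Voiculescu's theorem (recalled at the start of \S2) already supplies $\lambda\otimes\lambda$ as a summand of the left-right-measure of \emph{any} masa in a free group factor, so $[\eta]=[\lambda\otimes\lambda]$ follows without further work.
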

\begin{proof}
Let $\mathcal{R}=L^{\infty}(\underset{{n\in
\mathbb{Z}}}\prod(\{0,1\},\mu))\rtimes \mathbb{Z}$, where
$\mu(\{0\})=\mu(\{1\})=\frac{1}{2}$ and the action is Bernoulli
shift. Then the copy of $\mathbb{Z}$ gives rise to a  masa $A\subset
\mathcal{R}$ whose multiplicity is $\{\infty\}$ and whose
\emph{left-right-measure} is the class of product measure. This
follows from the fact that the maximal spectral type of Bernoulli
action is Lebesgue measure and its multiplicity is infinite on the
orthocomplement of constant functions and Prop. 3.1 \cite{MR1940356}. Consequently, for $k\geq 2$,
$A\subset
\mathcal{R}*(\overset{k-1}{\underset{r=1}*}\mathcal{R})\cong
L(\mathbb{F}_{k})$ \cite{MR1201693} is a singular masa whose
\emph{left-right-measure} is the class of product measure and whose
multiplicity function is $m\equiv \infty$, off the diagonal. Let $B$
be any single generator masa of
$L(\mathbb{F}_{k})$. The same holds for the single
generator masas as well due to malnormality of group inclusions. $A$ is not conjugate to $B$, as the former
is not maximally injective, while the single generator
masas are maximally injective from Cor. 3.3 \cite{MR720738}.
\end{proof}

\begin{rem}
We end this paper with the following observation. The following example was constructed by Smith and Sinclair in Example 5.1 of \cite{MR2163938}. It produces an example of
a m.p. dynamical system which solves Banach's problem with the group under consideration being $\mathbb{Q}^{\times}$.
Consider the matrix groups $G=\left\{ (\begin{smallmatrix}f&x\\0&1\end{smallmatrix}):f\in \mathbb{Q}^{\times},x\in \mathbb{Q}\right\}$ and $H$ the subgroup of $G$ consisting of diagonal matrices. Then $L(G)$ is isomorphic to the hyperfinite
$\rm{II}_{1}$ factor, and, $L(H)$ is a singular masa in $L(G)$. Note that $H$ is a malnormal subgroup of $G$ and $G=N\rtimes H$,
where $N=\left\{ (\begin{smallmatrix}1&x\\0&1\end{smallmatrix}):x\in \mathbb{Q}\right\}$. Matrix multiplication shows that
$(\begin{smallmatrix}1&1\\0&1\end{smallmatrix})$ is a bicyclic vector of $L(H)$. The \emph{left-right-measure} of $L(H)\subset L(G)$ is the class of product measure $($see Example 6.2 \cite{MR2261688}$)$.
\end{rem}

$ $\\
\noindent \emph{\textbf{Acknowledgements}}: I thank Ken Dykema, my
advisor, and Roger Smith for many helpful discussions. I am also
indebted to Stuart White for valuable conversations on Tauer masas.

\end{document}